\def\thetitle{{Surface subgroups of graph products of groups}}
\let\@@enum@org\@@enum@
\def\@@enum@[#1]{\@@enum@org[\normalfont #1]}
\newcounter{saveenum}
\newtheorem{thm}{Theorem}
\newtheorem{lem}[thm]{Lemma}
\newtheorem{cor}[thm]{Corollary}
\newtheorem{que}[thm]{Question}
\newtheorem{prob}[thm]{Problem}
\def\be{\begin{enumerate}}
\def\ee{\end{enumerate}}
\theoremstyle{remark}
\newtheorem{exmp}[thm]{Example}
\newtheorem*{rem}{Remark}
\newtheorem*{notelit}{Note on the literature}
\theoremstyle{definition}
\newtheorem{defn}[thm]{Definition}
\newcommand\form[1]{\langle #1\rangle}
\newcommand\ssm{\smallsetminus}
\def\opp{^\mathrm{opp}}
\newcommand\fform[1]{\langle\!\langle #1\rangle\!\rangle}
\newcommand\co{\colon}
\newcommand\R{\mathbb{R}}
\newcommand\Z{\mathbb{Z}}
\newcommand\val[1]{\operatorname{val}(#1)}
\newcommand\lk{\operatorname{Lk}}
\newcommand\st{\operatorname{St}}
\newcommand\Gam{\Gamma}
\newcommand\gam{\gamma}
\newcommand\mS{\mathcal{S}}
\newcommand\mX{\mathcal{X}}
\newcommand\mG{\mathcal{G}}
\newcommand\mH{\mathcal{H}}
\newcommand\mV{\mathcal{V}}
\title\thetitle
\author{Sang-hyun Kim}
\address{Department of Mathematical Sciences, KAIST, Yuseong-gu, Daejeon 305-701, Republic of Korea}
\email{shkim@kaist.edu}
\thanks{The author is supported by the Basic Science Research Program (2011-0026138) and the Mid-Career Researcher Program (2011-0027600) through the National Research Foundation funded by the Ministry of Education, Science and Technology of Korea.}
\date{\today}
\keywords{surface group, graph product, right-angled Artin group, right-angled Coxeter group}
\begin{document}
\begin{abstract}
A graph product kernel means the kernel of the natural surjection from a graph product to the corresponding direct product. We prove that a graph product kernel of countable groups is special, and a graph product of finite or cyclic groups is virtually cocompact special in the sense of Haglund and Wise. The proof of this yields conditions for a graph over which the graph product of arbitrary nontrivial groups (or some cyclic groups, or some finite groups) contains a hyperbolic surface group. In particular, the graph product of arbitrary nontrivial groups over a cycle of length at least five, or over its opposite graph, contains a hyperbolic surface group. For the case when the defining graphs have at most seven vertices, we completely characterize right-angled Coxeter groups with hyperbolic surface subgroups.
 \end{abstract}
\maketitle

\section{Introduction}

By a \emph{graph}, we mean a simplicial $1$--complex. 
Throughout this paper, we will let $\Gam$ be a finite graph. 
The vertex set and the edge set of $\Gam$ are denoted as $V(\Gam)$ and $E(\Gam)$, respectively. 
Suppose $\mG=\{G_v\co v\in V(\Gam)\}$ is a collection of groups indexed by $V(\Gam)$.
We define $GP(\Gam,\mG)$ to be the free product of the groups in $\mG$ quotient by the normal closure of the set $\{ [g,h] \co  g\in G_u,h\in G_v\mbox{ for some  }\{u,v\}\in E(\Gam)\}$. 
We call $GP(\Gam,\mG)$ as the \emph{graph product of the groups in $\mG$ over $\Gam$},
and each $G_v$ as a \emph{vertex group} of $GP(\Gam,\mG)$.
The kernel of the natural surjection $GP(\Gam,\mG)\to \prod_{v\in V(\Gam)}G_v$ is called as the \emph{graph product kernel of $\mG$ over $\Gam$} and denoted as $KP_0(\Gam,\mG)$.

By a \emph{hyperbolic surface group}, we mean the fundamental group of a closed hyperbolic surface.
For abbreviation, we let $\mS$ be the class of groups that contain hyperbolic surface groups.
Our main question is the following.
\begin{que}\label{que:main}
For which graph $\Gam$ and which collection of groups $\mG$, is $GP(\Gam,\mG)$ in $\mS$?
\end{que}

Let us briefly explain some motivation for Question~\ref{que:main}. Gromov asked the following intriguing question~\cite[p.277]{gromov1987}.

\begin{que}\label{que:gromov}
Is every one-ended word-hyperbolic group in $\mS$?
\end{que}

Question~\ref{que:gromov} has been answered for only a few cases, all affirmatively.
These include graphs of free groups with cyclic edge groups with nontrivial second rational homology~\cite{calegari2008}, doubles of rank-two free groups symmetrically amalgamated along cyclic edge groups~\cite{GW2010,KW2009,KO2011}, and most remarkably, the fundamental groups of closed hyperbolic $3$--manifolds~\cite{KM2009}. We note that these groups are all \emph{virtually cocompact special} in the sense that each one is virtually the fundamental group of a compact \emph{special} cube complex~\cite{HW2008,HW2011}\footnote{The fact that closed hyperbolic $3$--manifold groups are virtually cocompact special is recently announced by Agol.}; see Definition~\ref{defn:special}. So very broadly, we may ask under which conditions a one-ended, virtually cocompact special group belongs to $\mS$. On the other hand, 

\begin{thm}\label{thm:v special}
\be
\item
Graph product kernels of countable groups are special.
\item
Graph products of finite or cyclic groups are virtually cocompact special.
\ee
\end{thm}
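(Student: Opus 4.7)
My plan is to construct, for Part (1), an explicit non-positively curved cube complex $X=X(\Gam,\mG)$ with $\pi_1(X)\cong KP_0(\Gam,\mG)$, and to verify the four Haglund--Wise hyperplane conditions directly on $X$. The construction is Davis--Salvetti-style: take the Bass--Serre/Davis--Meier cube complex $\widetilde X$ associated to the graph product, whose vertex set is the collection of cosets $\{g\cdot G_K\co g\in GP(\Gam,\mG),\ K\text{ a clique of }\Gam\}$ with cubes spanned by compatible flags of such cosets. The group $GP(\Gam,\mG)$ acts cellularly on $\widetilde X$ with vertex stabilisers conjugate to subproducts $\prod_{v\in K}G_v$, and $\widetilde X$ is CAT(0) via a standard link argument (each link is the simplicial join of flag complexes associated to cliques of $\Gam$). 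Since every finite-order element of $GP(\Gam,\mG)$ is conjugate into some vertex group and therefore maps non-trivially to $\prod_v G_v$, the kernel $KP_0(\Gam,\mG)$ is torsion-free, so its action on $\widetilde X$ is free; I set $X=\widetilde X/KP_0(\Gam,\mG)$.

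Next I would verify specialness. Each hyperplane of $\widetilde X$ carries a natural label by the vertex $v\in V(\Gam)$ coming from the parabolic $G_v$-direction it is dual to. Two-sidedness and absence of self-intersection are immediate: parallel hyperplanes share a label, and the two sides of a $v$-hyperplane are distinguished by a $G_v$-coset coordinate. The remaining two conditions --- no direct self-osculation and no inter-osculation --- reduce to a combinatorial claim: hyperplanes labelled $u$ and $v$ cross (rather than merely osculate) precisely when $\{u,v\}\in E(\Gam)$, and this crossing pattern is globally consistent under the $KP_0$-action. I would derive this consistency from a normal-form theorem for graph products in the spirit of Green's thesis, adapted from the right-angled Artin case treated by Hsu--Wise.

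For Part (2), when every $G_v$ is finite, $\prod_v G_v$ is finite, so $KP_0(\Gam,\mG)$ has finite index in $GP(\Gam,\mG)$; then $X$ above is compact, and Part (1) yields virtual cocompact specialness. When some $G_v$ are infinite cyclic, I would pass instead to the finite-index normal subgroup
\[
H=\ker\Bigl(GP(\Gam,\mG)\to \prod_v(G_v/H_v)\Bigr),
\]
where $H_v=G_v$ when $G_v$ is finite and $H_v=N\Z$ for some $N\geq 2$ when $G_v\cong\Z$. This $H$ is torsion-free, has finite index, contains $KP_0$, and acts cocompactly on $\widetilde X$; an extension of the Part (1) hyperplane analysis to the labelling induced by $H$ rather than $KP_0$ then shows that $\widetilde X/H$ is itself special, completing the argument.

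The main obstacle will be the no-inter-osculation condition. Concretely, one must show that whenever two hyperplanes of $\widetilde X$ with distinct labels $u,v$ become adjacent to a common $0$-cube via a pair of dual edges, they actually span a square there --- they cannot merely osculate without crossing. This global coherence statement is exactly where the graph-product normal form does the essential work, and importing (or extending) such a normal-form theorem that accommodates arbitrary countable vertex groups uniformly is the chief technical hurdle; once it is in place, the specialness of $KP_0(\Gam,\mG)$ and hence both assertions of the theorem follow.
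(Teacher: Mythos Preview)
Your Part~(1) strategy is different from the paper's and could in principle be carried through, but it is considerably heavier; your Part~(2) contains a genuine error.

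The paper's approach avoids the Davis complex of $GP(\Gam,\mG)$ entirely. The key observation (made explicit in the paper's Remark after the main construction) is that $KP_0(\Gam,\mG)$ depends only on the \emph{orders} $|G_v|$, not on the group structures. This lets the paper work inside the universal cover $Y_\Gam\subseteq\R^n$ of the Salvetti complex $X_\Gam$ of $A(\Gam)$: one simply sets
\[
Z_0(\Gam,\mG)=Y_\Gam\cap\Bigl(\textstyle\prod_{|G_i|<\infty}[0,|G_i|-1]\times\R^{\#\{i:|G_i|=\infty\}}\Bigr)
\]
and proves, via the normal-form theorem for graph products, that $\pi_1(Z_0(\Gam,\mG))\cong KP_0(\Gam,\mG)$. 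Specialness is then immediate: the composite $Z_0(\Gam,\mG)\hookrightarrow Y_\Gam\to X_\Gam$ is a local isometry to a Salvetti complex, which is precisely the definition of special. No hyperplane condition is ever checked. Your route builds a larger intrinsic complex and must verify all four Haglund--Wise conditions by hand; the paper's shortcut is to recognise from the outset that the kernel already embeds in $A(\Gam)$ in a geometrically visible way.

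For Part~(2), your subgroup $H$ is \emph{not} torsion-free. With $H_v=G_v$ when $G_v$ is finite, the factor $G_v/H_v$ is trivial, so your map $GP(\Gam,\mG)\to\prod_v(G_v/H_v)$ kills every finite vertex group; any generator of a finite $G_v$ therefore lies in $H$, and $H$ does not act freely on $\widetilde X$. You have the roles reversed. The correct finite-index torsion-free subgroup is
\[
KP_f(\Gam,\mG)=\ker\Bigl(GP(\Gam,\mG)\to\textstyle\prod_{|G_v|<\infty}G_v\Bigr),
\]
which records the finite vertex groups faithfully and imposes no condition on the infinite cyclic ones. The paper builds a compact complex $Z_f(\Gam,\mG)$ for $KP_f$ by taking the same box in the finite directions and passing to the $S^1$ quotient in the infinite-cyclic directions; again the local isometry to $X_\Gam$ makes specialness automatic, and compactness plus $[GP:KP_f]<\infty$ gives virtual cocompact specialness.
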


The proof of Theorem~\ref{thm:v special} will reveal inclusion relations between certain subgroups of graph products, and so, provide an important tool for this paper. In some sense, a graph product kernel will ``remember'' only the order of each vertex group, while ``forgetting'' the group structure of it.

For $2\le m\le\infty$, we let $GP_m(\Gam)$ denote the graph product of cyclic groups of order $m$ over $\Gam$.
We write $A(\Gam)=GP_\infty(\Gam)$ and $C(\Gam)=GP_2(\Gam)$. We will call $A(\Gam)$ and $C(\Gam)$ as a \emph{right-angled Artin group} and a \emph{right-angled Coxeter group} on $\Gam$, respectively~\cite{charney2007}.
Question~\ref{que:main} has a close relation to the question of whether $A(\Gam)\in\mS$ or $C(\Gam)\in\mS$ as described below.

\begin{thm}\label{thm:artin coxeter}
\begin{enumerate}
\item
We have $C(\Gam)\in\mS$ if and only if the graph product of arbitrary nontrivial groups over $\Gam$ is in $\mS$.
\item
We have $A(\Gam)\in\mS$ if and only if the graph product of some cyclic groups over $\Gam$ is in $\mS$.
\item
We have $[A(\Gam),A(\Gam)]\in\mS$ if and only if the graph product of some finite groups over $\Gam$ is in $\mS$, if and only if $GP_m(\Gam)\in \mS$ for some $2\le m<\infty$.
\end{enumerate}
\end{thm}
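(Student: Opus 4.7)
The plan is to derive all three equivalences from Theorem~\ref{thm:v special} together with a monotonicity lemma for graph product kernels under subgroup inclusions at the vertices. The ``if'' direction in each equivalence is immediate upon specializing $\mG$ appropriately ($\{\Z/2\Z\}_v$ for (1), $\{\Z\}_v$ for (2), $\{\Z/m\Z\}_v$ for (3)), so the content lies in the converse implications.

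The starting lemma, which I would prove from the normal form for graph products, states that if $H_v\le G_v$ is a subgroup for every $v\in V(\Gam)$, then the natural map $GP(\Gam,\{H_v\})\to GP(\Gam,\mG)$ is an embedding and its image meets $KP_0(\Gam,\mG)$ in exactly $KP_0(\Gam,\{H_v\})$. Since every nontrivial group contains a nontrivial cyclic subgroup (isomorphic to $\Z$ or to $\Z/p\Z$ for some prime $p$), this lemma reduces each ``only if'' implication concerning arbitrary nontrivial vertex groups to the corresponding assertion with cyclic vertex groups, and localizes the search for surface subgroups to the graph product kernels themselves.

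With this reduction, I would invoke Theorem~\ref{thm:v special} to place all the relevant kernels $KP_0(\Gam,\{H_v\})$, for varying choices of cyclic $H_v$, as special subgroups of a common right-angled Artin group determined by $\Gam$ alone. For Part~(1) ``only if'', a hyperbolic surface subgroup of $C(\Gam)$ intersects the finite-index kernel $KP_0(\Gam,\{\Z/2\Z\}_v)$ in a surface subgroup, which then transports through the common ambient right-angled Artin group to a surface subgroup of $KP_0(\Gam,\{H_v\})\le GP(\Gam,\mG)$. Part~(2) ``only if'' runs in parallel, with $C(\Gam)$ replaced by $A(\Gam)$ and $KP_0(\Gam,\{\Z/2\Z\}_v)$ replaced by $[A(\Gam),A(\Gam)] = KP_0(\Gam,\{\Z\}_v)$. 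For Part~(3), the additional ingredient is that a surface subgroup of $[A(\Gam),A(\Gam)]$, being separable in $A(\Gam)$ by Haglund--Wise applied to the special structure of Theorem~\ref{thm:v special}, survives the quotient map to $GP_m(\Gam)$ for all sufficiently large $m$; the converse direction again reduces via the common ambient right-angled Artin group.

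The principal obstacle will be justifying the uniform embedding of different graph product kernels into a common right-angled Artin group and showing that surface subgroups transport along these embeddings. This requires a careful analysis of the construction in the proof of Theorem~\ref{thm:v special}, and it is where the inclusion relations between graph product kernels advertised in the introduction become essential.
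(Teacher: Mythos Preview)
Your proposed monotonicity lemma---that $H_v\le G_v$ for each $v$ yields an embedding $GP(\Gam,\{H_v\})\hookrightarrow GP(\Gam,\mG)$ carrying $KP_0(\Gam,\{H_v\})$ into $KP_0(\Gam,\mG)$---is correct, but it is the wrong monotonicity for Part~(1). After your reduction, the cyclic subgroups $H_v$ you extract from arbitrary nontrivial $G_v$ may be $\Z/3\Z$, $\Z/5\Z$, $\Z$, etc.; none of these need contain $\Z/2\Z$, so your lemma gives no map from $[C(\Gam),C(\Gam)]=KP_0(\Gam,\{\Z/2\Z\})$ into $KP_0(\Gam,\{H_v\})$. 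The step where the surface subgroup ``transports through the common ambient right-angled Artin group'' is therefore unjustified: knowing that two subgroups $A,B\le[A(\Gam),A(\Gam)]$ are both special gives no mechanism to move a surface subgroup of $A$ into $B$.

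What the paper actually uses is a stronger fact: $KP_0(\Gam,\mG)$ depends, up to isomorphism, only on the \emph{orders} $|G_v|$, and is monotone under increase of these orders (the paper's Corollary~\ref{cor:inclusion}). This comes from the explicit cube-complex model $Z_0(\Gam,\mG)=Y_\Gam\cap\prod_v[0,|G_v|-1]$ in Theorem~\ref{thm:kp}: since $|G_v|\ge2$ for every nontrivial $G_v$, the box $[0,1]^n$ sits inside $\prod_v[0,|G_v|-1]$, and the inclusion of subcomplexes is a local isometry. This gives $[C(\Gam),C(\Gam)]\hookrightarrow KP_0(\Gam,\mG)$ \emph{directly}, without ever passing to cyclic subgroups of the $G_v$. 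Your subgroup-inclusion lemma does not see this, because the comparison is between cardinalities, not between subgroups. You should replace your starting lemma with this order-monotonicity statement; it is what the ``inclusion relations advertised in the introduction'' actually are.

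Two smaller points. First, your direction labels are reversed in Part~(2): specializing to $\{\Z\}_v$ gives the ``only if'' direction, and the content is in ``if'' (some cyclic $GP\in\mS\Rightarrow A(\Gam)\in\mS$), which the paper handles by $KP_f(\Gam,\mG)\le\pi_1(Z_\Gam)\le A(\Gam)$. Second, for Part~(3) your separability argument is not the paper's route and would need more work: separability of a surface subgroup $H$ in $A(\Gam)$ does not by itself force $H$ to inject into the specific congruence quotient $GP_m(\Gam)$ for some $m$. The paper instead observes that $H$, being finitely generated, is carried by a compact subcomplex of $Y_\Gam$, hence by $Y_\Gam\cap[0,m-1]^n$ for $m$ large, which immediately gives $H\le[GP_m(\Gam),GP_m(\Gam)]$ (Lemma~\ref{lem:inclusion}).
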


We denote by $C_m$ the cycle of length $m$.
The \emph{opposite graph} $\Gam\opp$ of $\Gam$ is defined by $V(\Gam\opp)=V(\Gam)$ and $E(\Gam\opp)=\{\{u,v\}\co u\mbox{ and }v\mbox{ are non-adjacent vertices of }\Gam\}$.
If there is a finite sequence of edge-contractions~\cite[p.20]{diestel2010} from $\Gam_1\opp$ to $\Gam_2\opp$,
we say $\Gam_1$ \emph{co-contracts} onto $\Gam_2$. In~\cite{kim2008}, it was shown that a co-contraction $\Gam_1\to\Gam_2$ induces an embedding $A(\Gam_2)\hookrightarrow A(\Gam_1)$. 

\begin{thm}\label{thm:cocont}
Suppose $\Gam_1$ and $\Gam_2$ are finite graphs such that $\Gam_1$ co-contracts onto $\Gam_2$.
If $2\le m\le \infty$, then $GP_m(\Gam_2)$ embeds into $GP_m(\Gam_1)$.
\end{thm}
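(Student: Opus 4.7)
The plan is to induct on the number of edge-contractions in the co-contraction sequence $\Gam_1\to\cdots\to\Gam_2$, reducing the task to a single step: contract an edge $\{u,v\}$ of $\Gam_1\opp$, i.e.\ merge two distinct non-adjacent vertices $u,v$ of $\Gam_1$ into a new vertex $w$ whose $\Gam_2$-neighborhood equals $X:=N_{\Gam_1}(u)\cap N_{\Gam_1}(v)$. The $m=\infty$ case is \cite{kim2008}, realized by $w\mapsto uv$; for $2\le m<\infty$ the element $uv$ has infinite order in $GP_m(\Gam_1)$, so that formula no longer gives a homomorphism out of $GP_m(\Gam_2)$.

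Instead, I would define $\phi\colon GP_m(\Gam_2)\to GP_m(\Gam_1)$ on generators by $\phi(w)=vuv^{-1}$ and $\phi(z)=z$ for $z\in V(\Gam_2)\ssm\{w\}$. The element $vuv^{-1}$ has order exactly $m$ as a conjugate of $u$, and it commutes with every $z\in X$ since such $z$ commutes with both $u$ and $v$; the remaining relations of $GP_m(\Gam_2)$ are inherited from those of $GP_m(\Gam_1)$. So $\phi$ is a well-defined homomorphism.

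The core of the proof is injectivity, which proceeds in two steps. The commutation lemma asserts that $[vuv^{-1},z]=1$ in $GP_m(\Gam_1)$ if and only if $z\in X$. Conjugating by $v^{-1}$ turns this into the requirement that $v^{-1}zv$ centralize $u$; and for graph products of cyclic groups the centralizer of a vertex generator $u$ equals $\langle \lk(u)\cup\{u\}\rangle$ by a standard consequence of Green's normal form. A case split on whether $z$ commutes with $v$ (computing the support of $v^{-1}zv$ in Green normal form) then shows that this containment forces $z\in N_{\Gam_1}(u)\cap N_{\Gam_1}(v)=X$. Given the commutation lemma, injectivity of $\phi$ follows by tracking Green normal forms: for any nontrivial normal-form word $W$ in $GP_m(\Gam_2)$, each factor $w^k$ contributes a block $vu^k v^{-1}$ to $\phi(W)$, and the $v^{\pm 1}$-letters so introduced cannot cancel against the intervening $\phi(z)=z$ letters, so $\phi(W)\ne 1$. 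Equivalently, this step amounts to exhibiting a local isometry between the associated Salvetti-like cube complexes, whose virtual specialness is supplied by Theorem~\ref{thm:v special}(2).

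The main obstacle is precisely this shift from $uv$ to the conjugate $vuv^{-1}$: the RAAG argument of \cite{kim2008} exploits the short normal form of $uv$ in $A(\Gam_1)$, whereas $vuv^{-1}$ introduces extra $v^{\pm 1}$-letters whose non-cancellation against the rest of a word requires the commutation lemma together with careful normal-form bookkeeping. I expect the cleanest completion to route the injectivity check through the Salvetti-like cube complex and Haglund--Wise specialness provided by Theorem~\ref{thm:v special}, reducing the combinatorial verification to checking a local isometry at a single vertex.
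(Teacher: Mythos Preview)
Your map $\phi(w)=vuv^{-1}$, $\phi(z)=z$ is correct and in fact coincides with the embedding the paper constructs; the difference lies entirely in how injectivity is established. The paper avoids normal forms. Writing $GP_m(\Gam_1)\cong G\ast_H/\fform{v^m}$ with $G=GP_m(\Gam_1\ssm\{v\})$ and $H=\form{\lk(v)}$, the general doubling Lemma~\ref{lem:double} embeds $G\ast_H G\hookrightarrow GP_m(\Gam_1)$ (first factor by the identity, second by conjugation by $v$). One then checks purely graph-theoretically (Corollary~\ref{cor:cocont}, Figure~\ref{fig:cocont}) that $\Gam_2$ is an \emph{induced} subgraph of the double of $\Gam_1\ssm\st(v)$ along $\lk(v)$; hence $GP_m(\Gam_2)$ is a parabolic subgroup of $G\ast_H G$ and injects automatically. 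Your commutation lemma is precisely the verification that this subgraph is induced, and the doubling argument works uniformly for all $2\le m\le\infty$ and indeed for arbitrary nontrivial vertex groups.

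Your direct normal-form route can in principle be completed, but the clause ``the $v^{\pm1}$-letters so introduced cannot cancel'' is where the whole difficulty sits, and the commutation lemma alone does not settle it. A letter $z\in\lk(v)\ssm\lk(u)$ commutes past the flanking $v^{\pm1}$'s but not past $u$, while a letter $z\in\lk(u)\ssm\lk(v)$ does the opposite; a reduced word in $GP_m(\Gam_2)$ may interleave both kinds between consecutive $w$-syllables, so one genuinely needs a Britton-type argument for the decomposition $G\ast_H/\fform{v^m}$, which for finite $m$ is not an honest HNN extension. Packaging that step cleanly is exactly what Lemma~\ref{lem:double} does. The closing appeal to local isometries and to Theorem~\ref{thm:v special}(2) is off target: virtual specialness plays no role in the paper's proof of this theorem, and $\phi$ is not realized by any evident combinatorial map between the complexes $Z_f$ of Section~\ref{sec:cplx}.
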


It is well-known that $C(C_m)$ and $A(C_m)$ are in $\mS$ for $m\ge5$; see~\cite{SDS1989}. 
Also, it was shown that $A(C_m\opp)\in\mS$ for $m\ge5$ in~\cite{kim2008,css2008}; see~\cite{bell2012} for an alternative proof. 
Using Theorem~\ref{thm:cocont}, we generalize these results.

\begin{cor}[\cite{kim2007,HR2011}, \emph{cf}.~\cite{FT2012}]\label{cor:cycle}
For $m\ge5$, the graph product of arbitrary nontrivial groups over $C_m$ is in $\mS$.
\end{cor}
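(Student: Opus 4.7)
The plan is to apply Theorem~\ref{thm:artin coxeter}(1) to reduce the corollary to a single, classical fact about right-angled Coxeter groups. That theorem asserts the equivalence
\[
C(\Gam)\in\mS \iff \text{the graph product of arbitrary nontrivial groups over }\Gam\text{ is in }\mS.
\]
Specializing to $\Gam = C_m$, the corollary is equivalent to the assertion that $C(C_m)\in\mS$ for every $m\ge 5$. So, assuming Theorem~\ref{thm:artin coxeter}(1), the only remaining task is to verify the latter statement.

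To verify $C(C_m)\in\mS$, I would invoke the classical hyperbolic reflection-group construction of~\cite{SDS1989}. For $m\ge 5$ one can realize a right-angled regular $m$--gon $P$ in the hyperbolic plane $\mathbb{H}^2$. The group $C(C_m)$ has presentation generated by one involution per side of $P$ with commuting relations exactly between adjacent sides, and so is the reflection group of $P$ acting properly discontinuously and cocompactly on $\mathbb{H}^2$. By Selberg's lemma, $C(C_m)$ admits a torsion-free subgroup $H$ of finite index; since $H$ then acts freely, properly, and cocompactly on $\mathbb{H}^2$, the quotient $\mathbb{H}^2/H$ is a closed hyperbolic surface, and hence $H$ is a hyperbolic surface group contained in $C(C_m)$.

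Thus the entire substance of the corollary is packaged into Theorem~\ref{thm:artin coxeter}(1): it promotes the classical existence of surface subgroups in $C(C_m)$ to the corresponding statement for graph products with completely arbitrary nontrivial vertex groups. There is no genuine obstacle to overcome in this corollary, since the ``hard'' direction of Theorem~\ref{thm:artin coxeter}(1) is what does the real work, and the geometric $C(C_m)\in\mS$ claim is well established. The proof therefore consists of these two citations in sequence.
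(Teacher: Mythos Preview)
Your proposal is correct and follows exactly the paper's approach: the paper's proof reads ``Since $C(C_m)$ is a cocompact Fuchsian group, it contains a hyperbolic surface group. Apply Theorem~\ref{thm:artin coxeter} (1).'' Your argument simply unpacks the first sentence (right-angled $m$-gon reflection group plus Selberg's lemma), so the two proofs are essentially identical.
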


\begin{cor}\label{cor:anticycle}
For $m\ge5$, the graph product of arbitrary nontrivial groups over $C_m\opp$ is in $\mS$.
\end{cor}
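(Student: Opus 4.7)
The strategy is to reduce the statement, via Theorem~\ref{thm:artin coxeter}(1), to showing that the right-angled Coxeter group $C(C_m\opp)$ lies in $\mS$ for each $m\ge 5$. Since $C(C_5)\in\mS$ is the classical fact recalled just before Corollary~\ref{cor:cycle} (the right-angled hyperbolic pentagon reflection group is virtually a closed surface group), the plan is to produce an embedding $C(C_5)\hookrightarrow C(C_m\opp)$ by applying the Coxeter (i.e.\ $GP_2$) case of Theorem~\ref{thm:cocont}. This in turn reduces the problem to exhibiting a co-contraction from $C_m\opp$ onto $C_5$.

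Unwinding the definition, I need a finite sequence of edge-contractions taking $(C_m\opp)\opp = C_m$ to $C_5\opp$. Two elementary observations make this transparent. First, contracting any edge of the cycle $C_k$ (for $k\ge 4$) produces $C_{k-1}$, so iterating this operation $m-5$ times starting from $C_m$ terminates at $C_5$. Second, $C_5$ is self-opposite: its complement in $K_5$ is the pentagram, which is itself a $5$-cycle on the same vertex set. Composing the two steps yields a chain $C_m \to C_{m-1} \to \cdots \to C_5 \cong C_5\opp$, which is precisely the co-contraction $C_m\opp \to C_5$ demanded.

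Feeding this co-contraction into Theorem~\ref{thm:cocont} produces the embedding $C(C_5) \hookrightarrow C(C_m\opp)$, hence $C(C_m\opp)\in\mS$, and Theorem~\ref{thm:artin coxeter}(1) then upgrades this to the stated conclusion about graph products with arbitrary nontrivial vertex groups. I anticipate no serious obstacle; the only step deserving a moment's verification is the self-duality $C_5\cong C_5\opp$, which is immediate from inspecting the pentagram. In particular, no new combinatorial machinery is required beyond the co-contraction technology already developed in the excerpt.
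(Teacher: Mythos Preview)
Your proposal is correct and follows essentially the same route as the paper: the paper also observes that contracting edges of $C_m$ successively yields $C_5$, invokes the self-duality $C_5\cong C_5\opp$ to obtain an embedding $C(C_5)\cong C(C_5\opp)\hookrightarrow C(C_m\opp)$ via Theorem~\ref{thm:cocont}, and then (implicitly) uses Theorem~\ref{thm:artin coxeter}(1) to conclude. Your write-up is simply a more explicit unpacking of the same argument.
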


Suppose $X\subseteq V(\Gam)$. The \emph{induced subgraph of $\Gam$ on $X$} is the maximal subgraph of $\Gam$ whose vertex set is $X$.
If $\Lambda$ is isomorphic to an induced subgraph of $\Gam$, we simply write $\Lambda\le\Gam$ and say that $\Gam$ has an \emph{induced} $\Lambda$.
We also use the notation $H\le G$ for two groups $G$ and $H$, if there exists an embedding from $H$ into $G$.
We say $\Gam$ is \emph{weakly chordal} if $\Gam$ does not contain an induced $C_m$ or $C_m\opp$ for $m\ge5$. 
For each finite graph $\Gam_1$, there exists a (algorithmically constructible) graph $\Gam_2\ge\Gam_1$ such that $[C(\Gam_2):A(\Gam_1)]<\infty$~\cite{DJ2000}. In particular, $A(\Gam_1)\in\mS$ if and only if $C(\Gam_2)\in\mS$.
Hence, the classification of $\Gam$ satisfying $C(\Gam)\in\mS$ is presumably ``harder'' than that of $\Gam$ satisfying $A(\Gam)\in\mS$.
Complete classification of the graphs $\Gam$ with $|V(\Gam)|\le8$ and $A(\Gam)\in\mS$ is given in~\cite{css2008}.
We will classify all the graphs $\Gam$ with $|V(\Gam)|\le7$ and $C(\Gam)\in\mS$.

\begin{thm}\label{thm:7v}
Suppose $\Gam$ has at most seven vertices.
Then $C(\Gam)\in\mS$ if and only if $\Gam$ is not weakly chordal.
\end{thm}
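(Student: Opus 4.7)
The ``if'' direction is immediate. If $\Gam$ is not weakly chordal, $\Gam$ contains an induced subgraph $\Lambda \cong C_m$ or $C_m\opp$ for some $m\ge 5$. Since an induced subgraph of the defining graph gives a canonical subgroup embedding of graph products, $C(\Lambda)\le C(\Gam)$, and Corollary~\ref{cor:cycle} or~\ref{cor:anticycle} forces $C(\Lambda)$, and hence $C(\Gam)$, to lie in $\mS$.

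For the ``only if'' direction, let $\Gam$ be weakly chordal with $|V(\Gam)|\le 7$; I aim to prove $C(\Gam)\notin\mS$ by combining two structural reductions with a finite case analysis. \emph{Simplicial vertex reduction:} if some $v\in V(\Gam)$ has a clique as its link, then $C(\Gam)$ splits as an amalgamated product $C(\Gam\ssm v) *_{C(\lk(v))} C(\st(v))$ over the finite Coxeter group $C(\lk(v))$. A closed hyperbolic surface group is torsion-free and one-ended, so by a standard Bass--Serre argument it must conjugate into one of the two vertex groups, reducing the problem to a graph with fewer vertices. Iterating this reduction handles every chordal $\Gam$, since a perfect elimination ordering exhibits $C(\Gam)$ as a finite graph of finite groups with finite edge groups; such a group is virtually free and therefore contains no surface subgroup.

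\emph{Join reduction:} if $\Gam = \Gam_1 * \Gam_2$, then $C(\Gam) \cong C(\Gam_1)\times C(\Gam_2)$. In the small cases at hand each factor $C(\Gam_i)$ is close to virtually free, and one rules out a surface subgroup in the product by a Baumslag--Roseblade-style argument: a closed hyperbolic surface group $S$ in the product projects to each factor with kernel normal in $S$, and the classification of normal subgroups of torsion-free word-hyperbolic groups (trivial, $\Z$, or of finite index), together with the constrained subgroup structure of the factors, yields a contradiction in each configuration.

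After these two reductions, only a short finite list of non-chordal weakly chordal graphs on $\le 7$ vertices remains---those with an induced $C_4$ but no induced $C_m$ or $C_m\opp$ for $m\ge 5$, admitting neither a simplicial vertex nor a join decomposition. The \textbf{main obstacle} is the ad-hoc verification, graph by graph, that for each such $\Gam$ one has $C(\Gam)\notin\mS$. I anticipate handling these by one or more of: exhibiting a finer splitting of $C(\Gam)$ over a two-ended subgroup arising from an induced $C_4$; a direct argument in the Davis complex leveraging the virtually cocompact special structure furnished by Theorem~\ref{thm:v special}(2); or an extension-graph analysis via $\gex$ designed to preclude the induced subgraph patterns that would force a surface subgroup.
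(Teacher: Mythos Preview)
Your ``if'' direction is fine and matches the paper. The ``only if'' direction, however, is not a proof: it is a plan whose hardest part is explicitly left undone. You yourself identify the \textbf{main obstacle} as the ad-hoc verification of the residual list of graphs, and then offer only speculative tools (two-ended splittings, the Davis complex, the extension graph $\gex$) without carrying any of them out. That residual list is precisely where all the content of the theorem lies, and none of the tools you name is what actually resolves it.

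Your two reductions are valid (the simplicial-vertex reduction is essentially Lemma~\ref{lem:finite amalgam}, and the join reduction follows from Lemma~\ref{lem:product}), but they are far too coarse: most of the weakly chordal graphs on seven vertices containing an induced $C_4$ survive both reductions. The paper avoids an unstructured enumeration by a key shortcut you are missing: since $[C(\Gam),C(\Gam)]\le[A(\Gam),A(\Gam)]$, the hypothesis $C(\Gam)\in\mS$ forces $A(\Gam)\in\mS$, and the classification of~\cite{css2008} then pins $\Gam$ down to containing an induced $P_6\opp$, $P_1(7)$, or $P_2(7)$. This immediately gives a single seven-vertex extension $\Gam\supseteq P_6\opp$ to analyse, organised by the valence of the extra vertex.

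The actual machinery used for that analysis is also different from anything you suggest. The paper introduces the notion of a \emph{small} pair $(G,H)$ (Definition~\ref{defn:small}) and the transversality Lemma~\ref{lem:transverse}, proves via a label-reading argument (Lemma~\ref{lem:links}) that certain pairs are small, and combines this with the doubling embeddings of Lemma~\ref{lem:double} and the co-contraction embeddings of Theorem~\ref{thm:cocont}. In particular, several seven-vertex cases are disposed of by embedding $C(\Gam)$ into one of five specific eight-vertex Coxeter groups $C(\Phi_1),\ldots,C(\Phi_5)$ shown in Lemma~\ref{lem:8v} to lie outside $\mS$. Nothing in your proposal approximates these ingredients, and without them the residual cases cannot be closed.
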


In particular, the proof of Theorem~\ref{thm:7v} will exhibit graphs $\Gam$ such that $A(\Gam)\in\mS$ and $C(\Gam)\not\in\mS$.
When $\Gam$ has more than seven vertices, $C(\Gam)\in\mS$ does not necessarily imply that $\Gam\ge C_m$ or $\Gam\ge C_m\opp$ for some $m\ge5$; see Remark~\ref{rem:w chordal}.  Lastly, we will make an observation that the class of finitely generated groups that ``conform'' to an affirmative answer to Question~\ref{que:gromov} is closed under graph products. 

Here is the organization of this paper. In Section~\ref{sec:prelim}, we summarize basic facts on cube complexes and label-reading maps. 
We describe two special cube complexes whose fundamental groups are specific subgroups of graph products and use these complexes to prove Theorems~\ref{thm:v special},~\ref{thm:artin coxeter} and Corollary~\ref{cor:cycle} in Section~\ref{sec:cplx}. Section~\ref{sec:double} introduces a general, combinatorial group theoretic lemma, which yields nontrivial embeddings between  graph products. Theorem~\ref{thm:cocont} and Corollary~\ref{cor:anticycle} will follow. In Section~\ref{sec:7v}, we investigate seven-vertex graphs and prove Theorem~\ref{thm:7v}.  We discuss a role of graph products in relation to Question~\ref{que:gromov} in Section~\ref{sec:gp closure}.

\begin{notelit}
\be
\item
Haglund has shown that a graph product of \emph{finite} groups is virtually cocompact special~\cite{haglund2008} by considering a certain cube complex for its graph product kernel; see also~\cite{CRSV2010}. The cube complex discussed in Section~\ref{sec:cplx} is not a generalized version of his complex.
\item
While it is unknown whether Coxeter groups are virtually \emph{cocompact} special, they are virtually special~\cite[Problem 9.2, Theorem 1.2]{HW2011}.
This already implies that graph products of finite or cyclic groups are virtually special, since these graph products embed into Coxeter groups. Note that for Coxeter groups, Question~\ref{que:gromov} has an affirmative answer as well~\cite{GLR2004}.
\item
Holt and Rees constructed a complex $Z$ for a graph product kernel of \emph{cyclic} groups~\cite[Theorem 3.1]{HR2011}. Their complex $Z$ is different from ours in that $Z$ is not cubical and not necessarily aspherical.
Theorem~\ref{thm:artin coxeter} (1) can also be proved using the construction of Holt and Rees, combined with Droms' description of a complex for $[C(\Gam),C(\Gam)]$; see~\cite{droms2003}.
\item
Corollary~\ref{cor:cycle} was first proved in the author's~Ph.D thesis~\cite[Theorem 3.6]{kim2007}, but never published by the author. After that, the same result was proved again by Futer and Thomas (for $m\ge6$)~\cite[Corollary 1.3]{FT2012} and by Holt and Rees~\cite[Theorem 3.1]{HR2011}. 
\item
Theorem~\ref{thm:cocont} and Corollary~\ref{cor:anticycle} were proved in the author's Ph.D thesis~\cite[Corollaries 4.11 and 4.12]{kim2007}, but never published anywhere. We will give new accounts of these results.
\ee 
The methods presented in this article do not depend on the above mentioned works.
\end{notelit}

\textbf{Acknowledgement.}
I  am grateful to Andrew Casson for his guidance. I thank Fr\'{e}d\'{e}ric Haglund for an inspirational conversation.

\section{Preliminary on cube complexes and label-reading maps}\label{sec:prelim}
\subsection{Local isometries and special cube complexes}
By a \emph{cube complex}, we mean a CW-complex obtained from unit Euclidean cubes of various dimensions by isometrically gluing some of the faces. A \emph{flag complex} is a simplicial complex $L$ such that each complete subgraph in $L^{(1)}$ is the $1$--skeleton of some simplex in $L$. We say a cube complex $X$ is \emph{nonpositively curved}, or simply \emph{NPC}, if the link of each vertex is a flag complex; this is equivalent to saying that the piecewise Euclidean length metric induced on the universal cover of $X$ is CAT(0)~\cite{gromov1987}.

We denote by $ X_\Gam$ the \emph{Salvetti complex} of $A(\Gam)$~\cite{charney2007}.
This means that $ X_\Gam^{(2)}$ is the presentation 2-complex of $A(\Gam)$,
and for each maximal complete subgraph $K$ of $\Gam$ with $k$ vertices, 
a $k$-torus $T$ is glued to $ X_\Gam^{(2)}$ so that the $1$--skeleton of $T$ is the bouquet of the circles corresponding to the vertices of $K$. 
Note that $X_\Gam$ is an NPC cube complex such that $\pi_1(X_\Gam)\cong A(\Gam)$; see~\cite{charney2007}.

If $X$ is a cube complex and $v$ is a vertex of $X$, we denote the link of $X$ at $v$ by $\lk(X;v)$. 
Let us consider a combinatorial map $f:X\to Y$ between cube complexes $X$ and $Y$.
The map $f$ induces a simplicial map $\lk(f;v)$ between $\lk(X;v)$ and $\lk(Y;f(v))$ for each vertex $v$ of $X$.
Following~\cite{charney2000}, we call $f$ as a \emph{local isometry} if 
\begin{enumerate}[(i)]
\item
$\lk(f;v)$ is injective, and
\item
the image of $\lk(f;v)$ is a full subcomplex of $\lk(Y;f(v))$.
\end{enumerate}

\begin{lem}[{\cite{charney2000,CW2004,bh1999}}]\label{npc inj}
Suppose $X$ and $Y$ are cube complexes and $f:X\to Y$ is a combinatorial map.
If $Y$ is NPC and $f$ is a local isometry, then $X$ is also NPC and $f$ is $\pi_1$--injective.
\end{lem}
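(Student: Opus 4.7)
The plan is to prove the two conclusions in order: first that $X$ inherits the NPC property, and then that $f$ is $\pi_1$--injective by lifting to universal covers and using CAT(0) geometry.

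For the NPC claim, I would argue pointwise at each vertex $v \in X$. By the definition of a cube complex, $\lk(X;v)$ is a simplicial complex, and the NPC condition reduces to showing it is a flag complex. Since $Y$ is NPC, $\lk(Y;f(v))$ is already flag. The hypothesis on $f$ says $\lk(f;v)$ injects $\lk(X;v)$ as a full subcomplex of $\lk(Y;f(v))$. A full subcomplex of a flag complex is itself flag: any set of pairwise adjacent vertices in $\lk(X;v)$ maps to a set of pairwise adjacent vertices in $\lk(Y;f(v))$ that spans a simplex there, and fullness forces that simplex to lie in the image of $\lk(f;v)$; injectivity of $\lk(f;v)$ then produces the desired simplex downstairs. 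So $X$ is NPC.

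For $\pi_1$--injectivity, I would pass to universal covers. Let $\tilde{X}$ and $\tilde{Y}$ denote the universal covers with lifted piecewise Euclidean metrics; by the NPC property of $X$ (just established) and of $Y$, both are complete CAT(0) spaces by the Gromov link condition. The map $f$ lifts to a combinatorial map $\tilde{f}\co \tilde{X}\to\tilde{Y}$, and since links lift locally homeomorphically, $\tilde{f}$ is also a combinatorial local isometry in the sense of the definition. The standard Charney--Davis/Bridson--Haefliger translation shows that this combinatorial condition is equivalent to $\tilde{f}$ being a local isometry of piecewise Euclidean metric spaces: fullness of the link image is exactly what guarantees that geodesics passing through $\tilde{f}(v)$ from within the image of $\tilde{f}$ make the same Alexandrov angle as their preimages.

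Once $\tilde{f}$ is a metric local isometry between CAT(0) spaces, global injectivity is automatic by a classical geodesic argument: if $\tilde{f}(p)=\tilde{f}(q)$ with $p\ne q$, the unique geodesic from $p$ to $q$ in $\tilde{X}$ maps under $\tilde{f}$ to a local geodesic in $\tilde{Y}$, which in a CAT(0) space is a genuine geodesic, hence constant (because its endpoints coincide), contradicting positivity of length. Injectivity of $\tilde{f}$ then yields injectivity of $f_\ast\co \pi_1(X)\to\pi_1(Y)$ by the usual covering-space argument: a loop in $\ker f_\ast$ lifts to a path in $\tilde{X}$ whose $\tilde{f}$--image closes up in $\tilde{Y}$, forcing the lift itself to close up. The main obstacle is really the combinatorial-to-metric translation in the middle step; everything else is essentially soft once one knows that $X$ is NPC and that local isometries into CAT(0) spaces are globally injective.
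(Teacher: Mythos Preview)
Your argument is correct and is essentially the standard one found in the references the paper cites; note, however, that the paper itself does not supply a proof of this lemma at all---it simply records the statement with attributions to \cite{charney2000,CW2004,bh1999} and moves on. So there is nothing in the paper to compare your proof against beyond those citations, and what you have written is precisely the kind of argument those sources contain: pull back the flag condition via fullness of the link image, then lift to universal covers and use that local geodesics in a CAT(0) space are global geodesics to conclude that $\tilde f$ is an isometric embedding.
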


\begin{defn}[{\cite{HW2008,HW2011}}]\label{defn:special}
\be
\item
A cube complex $X$ is called \emph{special} if $X$ combinatorially maps to a Salvetti complex by a local isometry.
\item
A group $G$ is \emph{special} if $G\cong\pi_1(X)$ for some special cube complex $X$. Furthermore, if $X$ can be chosen to be compact, then we say $G$ is \emph{cocompact special}.
\ee
\end{defn}

We remark that Definition~\ref{defn:special} (1) is different from, but equivalent to, the original definition in~\cite{HW2008}; see \cite[Proposition 3.2]{HW2011}. 
For a group theoretic property $P$, we say a group $G$ is \emph{virtually} $P$ if a finite-index subgroup of $G$ is $P$.
Virtually special groups are of particular interest in 3--manifold theory~\cite{Agol2008}.

\subsection{Label-reading maps}
By a \emph{curve} on a surface, we will mean a simple closed curve or a properly embedded arc.
Let $S$ be a compact surface possibly with boundary.
Suppose $\mV$ is a finite set of transversely intersecting curves on $S$ and $\lambda\co\mV\to V(\Gam)$ is a map such that two curves $\alpha$ and $\beta$ in $\mV$ are intersecting only if $\{\lambda(\alpha),\lambda(\beta)\}\in E(\Gam)$.
Following~\cite{CW2004}, we say that $(\mV,\lambda)$ is a \emph{label-reading pair} on $S$ with the \emph{underlying graph} $\Gam$; and for each $\alpha\in\mV$, we call $\lambda(\alpha)$ as the \emph{label} of $\alpha$. 
If an arc $\alpha$ is labeled by $a\in V(\Gam)$, we say $\alpha$ is an \emph{$a$--arc}.
For each oriented path $\gamma$ transverse to $\mV$, we follow $\gam$ and read off the labels of the curves in $\mV$ that intersect $\gamma$. The word $w(\gamma)$ thus obtained will be called the \emph{label-reading} of $\gamma$ with respect to $(\mV,\lambda)$. The word $w(\gamma)$ represents an element of $C(\Gam)$.
If there exists a group homomorphism $\phi\co \pi_1(S)\to C(\Gam)$ satisfying that $\phi([\gamma])=w(\gamma)$ for each $[\gamma]\in\pi_1(S)$, we call $\phi$ as a \emph{label-reading map} with respect to $(\mV,\lambda)$. 

Recall that a word $w$ representing an element in $C(\Gam)$ is \emph{reduced} if no shorter word represents the same element. It is \emph{cyclically reduced} if $w$ and each of its cyclic conjugations are reduced. 
If a curve $\gamma$ on a compact surface $S$ is homotopic to a subset of $\partial S$ by a homotopy fixing $\partial \gamma$, then we say $\gamma$ is \emph{homotopic into} $\partial S$.

Crisp and Wiest proved that the fundamental group of a closed hyperbolic surface $S$ embeds into some right-angled Artin group if and only if $\chi(S)\ne-1$~\cite{CW2004}. A critical tool for the proof was the realization of an arbitrary group homomorphism $\phi\co \pi_1(S)\to A(\Gam)$ as a label-reading map (using $A(\Gam)$ instead of $C(\Gam)$).
The following is a simple variation of the results in~\cite{CW2004} combined with~\cite{kim2010}.

\begin{thm}[\cite{CW2004,kim2010}]\label{thm:lr}
Let $S$ be a compact surface.
\be
\item
Suppose $(\mV,\lambda)$ is a label-reading pair on $S$ with the underlying graph $\Gam$.
Then for each choice of the base point of $S$, there uniquely exists a label-reading map $\phi\co \pi_1(S)\to C(\Gam)$ with respect to $(\mV,\lambda)$.
\item
Conversely, every group homomorphism $\phi\co\pi_1(S)\to C(\Gam)$ can be realized as a label-reading map with respect to some label-reading pair $(\mV,\lambda)$ that has the underlying graph $\Gam$.
\item
Possibly after composing $\phi$ with an inner automorphism of $C(\Gam)$, we can choose $(\mV,\lambda)$ in (2) further satisfying the following:
\be[(i)]
\item
curves in $\mV$ are minimally intersecting;
\item
curves in $\mV$ are neither null-homotopic nor homotopic into $\partial S$;
\item
for each component $\partial_i S$ of $\partial S$, the label-reading $w(\partial_i S)$ is cyclically reduced.
\ee
\ee
\end{thm}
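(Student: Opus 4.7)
The plan is to handle (1), (2), (3) in order, using the Davis $\catz$ cube complex $\Sigma_\Gam$ associated to $C(\Gam)$ as a geometric intermediary between label-reading data and homomorphisms. For (1), it suffices to verify that the label-reading $w(\gam)$, initially defined only on paths transverse to $\mV$, descends to a well-defined homomorphism $\pi_1(S) \to C(\Gam)$. Any two homotopic transverse paths with fixed endpoints differ by a generic homotopy that decomposes into two elementary moves: (a) a tangency with a single curve $\alpha \in \mV$ of label $a$, which inserts or deletes a consecutive pair $aa$; and (b) passage through a transverse double point of two curves $\alpha, \beta \in \mV$ with labels $a, b$, which swaps a subword $ab$ to $ba$. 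Since intersecting curves in $\mV$ have labels joined by an edge of $\Gam$, one has $[a,b] = 1$ in $C(\Gam)$; and since every generator of $C(\Gam)$ has order two, each move is an identity there. Concatenation of paths corresponds to concatenation of words, so the assignment is a homomorphism, and uniqueness is immediate.

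For (2), given $\phi$, the goal is to realize it geometrically on the Davis complex. Let $\Sigma_\Gam$ be the $\catz$ cube complex on which $C(\Gam)$ acts cocompactly and properly, with each hyperplane canonically labeled by an element of $V(\Gam)$ corresponding to a reflection generator. Since $\Sigma_\Gam$ is contractible, $\phi$ is induced by some $\phi$--equivariant continuous map $\tilde f \co \tilde S \to \Sigma_\Gam$, which we may homotope to be transverse to every hyperplane. The $\pi_1(S)$--invariant system of hyperplane preimages in $\tilde S$, labeled by the corresponding hyperplane labels, descends to a label-reading pair $(\mV,\lambda)$ on $S$ with underlying graph $\Gam$. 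By construction, reading off hyperplane crossings along a loop in $S$ tautologically recovers $\phi$, so $\phi$ is the induced label-reading map.

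For (3), I would simplify $(\mV,\lambda)$ by surface surgery combined with $\catz$ geometry. A null-homotopic component of $\mV$ bounds a disk in $S$, across which the corresponding portion of $\tilde f$ may be homotoped to eliminate the curve; a component homotopic into $\partial S$ is pushed off through a collar. Minimal intersection between distinct curves is arranged by an innermost-bigon argument using hyperplane preimages. Finally, $w(\partial_i S)$ is cyclically reduced precisely when no two consecutive crossings of the same label along $\partial_i S$ cobound an embedded bigon with an arc of $\partial_i S$; such a bigon can be removed by a boundary-relative isotopy, possibly after cyclically shifting the basepoint along the boundary, which corresponds to composing $\phi$ with an inner automorphism of $C(\Gam)$.

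The main obstacle is achieving (i), (ii), and (iii) simultaneously, since removing a bigon on one boundary component can generically drag curves across one another and create new intersections. The clean fix is to straighten $\tilde f$ to a piecewise-geodesic $\phi$--equivariant map into $\Sigma_\Gam$; then $\catz$ convexity guarantees that $\tilde f$ crosses each hyperplane precisely the minimum number of times forced by $\phi$, and the descended hyperplane preimages automatically satisfy all three conditions. This is the Crisp--Wiest strategy of~\cite{CW2004} adapted from $A(\Gam)$ to $C(\Gam)$, with the torsion relation $a^2 = 1$ in $C(\Gam)$ matching precisely the tangency move (a) appearing in (1).
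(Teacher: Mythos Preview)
Your arguments for (1) and (2) are correct and essentially match the paper's approach: the paper simply cites Crisp--Wiest~\cite{CW2004} for the right-angled Artin case and notes that the Coxeter case is identical up to dropping transverse orientations (so that the tangency move (a) gives $aa=1$ rather than $aa^{-1}=1$) and allowing one-sided curves. Your use of the Davis complex $\Sigma_\Gam$ in (2) is the natural Coxeter analogue of the Salvetti complex used in~\cite{CW2004}, so this is the same construction in different clothing.

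For (3) there is a genuine gap. You correctly identify the difficulty --- that the three conditions can interfere --- but your proposed ``clean fix'' does not work as stated. Straightening a map $\tilde f\co\tilde S\to\Sigma_\Gam$ to be piecewise geodesic makes sense for $1$--dimensional domains (loops and arcs), where $\catz$ geometry indeed forces each hyperplane to be crossed minimally; but for a $2$--dimensional surface there is no canonical ``piecewise-geodesic'' representative, and extending a geodesic map on a triangulation's $1$--skeleton across $2$--cells gives no control on hyperplane preimages. So the claim that the descended curve system ``automatically'' satisfies (i)--(iii) is unjustified.

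The paper handles (3) differently and more simply: it minimises the lexicographic complexity
\[
\bigl(\,|(\cup\mV)\cap\partial S|,\ |\mV|,\ \textstyle\sum_{\alpha\ne\beta\in\mV}|\alpha\cap\beta|\,\bigr)
\]
over all label-reading pairs realising $\phi$ up to inner automorphism. The first coordinate forces (iii), the second forces (ii), and the third forces (i); the lexicographic ordering is precisely what prevents the interference you worried about, since each later simplification is only permitted when it does not increase an earlier coordinate. Your individual surgery moves (removing innermost disks, pushing off $\partial S$, cancelling bigons) are exactly the moves that decrease this complexity, so you had all the ingredients --- what was missing was the bookkeeping device that organises them.
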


\begin{proof}
(1) and (2) are proved in~\cite{CW2004} for right-angled Artin groups. The proofs for right-angled Coxeter groups are very similar, except that we now allow $\mV$ to contain orientation-reversing closed curves and also that curves in $\mV$ are \emph{not} assigned with transverse orientations. 
(3) is obtained by lexicographically minimizing the complexity $(|(\cup\mV)\cap\partial S|, |\mV|,\sum_{\alpha\ne\beta\in\mV}|\alpha\cap\beta|)$, possibly after changing $\phi$ by $\mathrm{Inn}(C(\Gam))$; see~\cite{CW2004} and~\cite{kim2010} for discussion on the same technique.
\end{proof}

\section{Special cube complexes for certain subgroups of graph products}\label{sec:cplx}
In this section, we write $V(\Gam)=\{1,2,\ldots,n\}$ and  assume $\mG=\{G_1,G_2,\ldots,G_n\}$ is a collection of groups indexed by $V(\Gam)$.
Choose $k$ such that $|G_i|$ is finite if and only if $1\le i\le k$.
Set $p_i\co GP(\Gam,\mG)\to G_i$ to be the natural projection map for $i=1,2,\ldots,n$.
Recall that we have defined the \emph{graph product kernel} as $KP_0(\Gam,\mG)=\cap_{1\le i\le n} \ker p_i$. We also define $KP_f(\Gam,\mG)=\cap_{1\le i\le k}\ker p_i$. 
Note that $KP_0(\Gam,\mG)\le KP_f(\Gam,\mG)\le GP(\Gam,\mG)$ and that $GP(\Gam,\mG) / KP_f(\Gam,\mG) \cong \prod_{1\le i\le k} G_i$ is finite. 
If all the groups in $\mG$ are abelian, $KP_0(\Gam,\mG)$ is the commutator subgroup of $GP(\Gam,\mG)$.

Let us regard $\mathbb{R}^n$ as a cube complex whose vertices are the lattice points and whose $1$--skeleton consists of the grid lines. We set ${e}_i$ to be the $i$-th standard basis vector.
Following~\cite{SDS1989}, $Y_\Gam \le \mathbb{R}^n$ is defined to be the lift of $X_\Gam\subseteq (S^1)^n$ with respect to the covering $\R^n\to (S^1)^n$. 
Concretely, $(Y_\Gam)^{(1)}=(\R^n)^{(1)}$ and 
 for each complete subgraph of $\Gam$ having the vertex set $\{i_1,\ldots,i_k\}$, 
the following collection of the unit $k$--cubes is contained in $Y_\Gam$:
\[
\left\{ \sum_{j=1}^k t_j {e}_{i_j} \co t_j\in[0,1] \right\}+\mathbb{Z}^n.
\]

We define $Z_0(\Gam,\mG)=Y_\Gam\cap\left(\prod_{i=1}^k[0, | G_i |-1]\times\R^{n-k}\right)\subseteq \R^n$.
We let $Z_\Gam$ denote the preimage of $X_\Gam\subseteq (S^1)^n$ with respect to the covering $\R^k\times (S^1)^{n-k}\to (S^1)^n$ and put $Z_f(\Gam,\mG) = Z_\Gam\cap \left(\prod_{i=1}^k [0,|G_i|-1] \times (S^1)^{n-k}\right)$.
See Figure~\ref{fig:kp}.

\begin{figure}[htb!]
\[
\xymatrix{
Z_0(\Gam,\mG) =Y_\Gam\cap\left(\prod_{i=1}^k[0, | G_i |-1]\times\R^{n-k}\right) \ar@{^(->}[rr]\ar[d] && Y_\Gam\ar@{^(->}[rr]\ar[d]^p && \R^n\ar[d]\\
Z_f(\Gam,\mG) = Z_\Gam\cap \left(\prod_{i=1}^k [0,|G_i|-1] \times (S^1)^{n-k}\right)\ar@{^(->}[rr] && Z_\Gam\ar@{^(->}[rr]\ar[d] && \R^k\times(S^1)^{n-k}\ar[d]\\
&& X_\Gam \ar@{^(->}[rr] &&(S^1)^{n}
}
\]
\caption{Cube complexes in Theorem~\ref{thm:kp}. The horizontal maps are inclusions and the vertical maps are coverings.
\label{fig:kp}}
\end{figure}

It is well-known that 
 $\pi_1(Y_\Gam)=[A(\Gam),A(\Gam)]$ and that $\pi_1(Y_\Gam\cap[0,1]^n)\cong [C(\Gam),C(\Gam)]$; see
 ~\cite{SDS1989,droms2003}. 
We generalize these observations as follows.

\begin{thm}\label{thm:kp}
\be
\item
If $\mG$ consists of countable groups, then $\pi_1(Z_0(\Gam, \mG))\cong KP_0(\Gam,\mG)$ . 
\item
If $\mG$ consists of finite or cyclic groups, then $\pi_1(Z_f(\Gam, \mG))\cong  KP_f(\Gam,\mG)$.
\ee
\end{thm}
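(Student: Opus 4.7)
My plan is to prove both parts of the theorem by first realising the inclusions $Z_0(\Gam,\mG)\hookrightarrow Y_\Gam$ and $Z_f(\Gam,\mG)\hookrightarrow Z_\Gam$ as local isometries of NPC cube complexes, and then constructing an explicit label-reading isomorphism from the resulting fundamental group onto $KP_0(\Gam,\mG)$ (respectively $KP_f(\Gam,\mG)$).

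For the local-isometry step, at every lattice vertex $v$ of $Z_0$ the ambient link $\mathrm{Lk}(Y_\Gam,v)$ coincides with the Salvetti link $\mathrm{Lk}(X_\Gam,\ast)$, a flag complex on the doubled vertex set $\{\pm v_i\}$. At an interior vertex the link inherited by $Z_0$ is this full ambient link; at a boundary vertex with $v_i=0$ or $v_i=|G_i|-1$ for some $i\le k$, the link of $Z_0$ is obtained by deleting the link vertex $-v_i$ or $+v_i$, and any subcomplex of a flag complex obtained in this way is full. The two conditions of a local isometry are therefore satisfied, and Lemma~\ref{npc inj} yields injections $\pi_1(Z_0)\hookrightarrow[A(\Gam),A(\Gam)]$ and $\pi_1(Z_f)\hookrightarrow\pi_1(Z_\Gam)$.

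For the identification step, I plan to fix arbitrary bijections $\alpha_i\co\{0,1,\ldots,|G_i|-1\}\to G_i$ (and $\alpha_i=\mathrm{id}\co\Z\to G_i$ when $G_i$ is infinite cyclic, as used in part~(2)), and to label each oriented edge $v\to v+e_i$ of $Z_0$ by $\alpha_i(v_i+1)\alpha_i(v_i)^{-1}\in G_i$. Around any $2$-cell the label reading is a commutator $[g_i,g_j]$ for some $\{v_i,v_j\}\in E(\Gam)$, which vanishes in $GP(\Gam,\mG)$; hence the label-reading of a based edge-loop defines a homomorphism $\phi\co\pi_1(Z_0)\to GP(\Gam,\mG)$ whose image lies in $KP_0$ because a loop returns to its base-point in every coordinate. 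Surjectivity is obtained by an explicit construction: given an element of $KP_0$ in reduced normal form $h_1h_2\cdots h_r$ with $h_\ell\in G_{i_\ell}$, at the $\ell$-th step I move only in the $i_\ell$-direction from the current coordinate $v_{i_\ell}$ to $\alpha_{i_\ell}^{-1}(h_\ell\alpha_{i_\ell}(v_{i_\ell}))$; telescoping of consecutive edge labels forces this subpath to carry label $h_\ell$, and the condition $h_1\cdots h_r\in KP_0$ guarantees every coordinate returns to $0$, producing the required loop.

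Injectivity of $\phi$ is then deduced by combining the local-isometry embedding $\pi_1(Z_0)\hookrightarrow[A(\Gam),A(\Gam)]$ with a normal-form analysis in $GP(\Gam,\mG)$: commutation relations across edges of $\Gam$ correspond to square-moves in $Z_0$, while the box constraint prevents any vertex-group relation beyond back-tracking from trivialising a label-reading, since the telescoping formula $\alpha_i(v_i+r)\alpha_i(v_i)^{-1}$ equals $1$ in $G_i$ only when $r=0$ modulo $|G_i|$, and $r\ge|G_i|$ is impossible inside the box. The parallel plan handles part~(2) for $Z_f$ and $KP_f$, where the $(S^1)^{n-k}$ factors correspond to the cyclic vertex groups not killed by $KP_f$. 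The main obstacle will be making the injectivity argument rigorous: translating an arbitrary algebraic reduction of the label-reading into a geometric null-homotopy in $Z_0$ requires a careful bookkeeping that pairs each type of reduction available in $GP(\Gam,\mG)$ (vertex-group versus commutation) with the corresponding moves (back-tracking versus crossing a $2$-cell) in the cube complex, particularly when the vertex groups are not cyclic and several reductions can interact.
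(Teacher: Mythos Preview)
Your label--reading map and your surjectivity construction are essentially the paper's maps $\Phi$ and $\Psi$, so the core of your plan is right.  There is one genuine gap and one unnecessary detour, however.

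The detour is the local--isometry step.  Lemma~\ref{npc inj} does give you an embedding $\pi_1(Z_0)\hookrightarrow[A(\Gam),A(\Gam)]\le A(\Gam)$, but this map sends every edge $v\to v+e_i$ to the generator $a_i\in A(\Gam)$, \emph{not} to your label $\alpha_i(v_i+1)\alpha_i(v_i)^{-1}\in G_i$.  When the vertex groups are not cyclic there is no homomorphism $A(\Gam)\to GP(\Gam,\mG)$ (or the other way) through which your $\phi$ factors, so knowing that the map to $A(\Gam)$ is injective tells you nothing about the injectivity of $\phi$.  The local isometry is used in the paper only for Theorem~\ref{thm:v special} and Corollary~\ref{cor:gpk}, not here.

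The gap is exactly what you flag as the ``main obstacle'', and the paper resolves it by a small change of viewpoint rather than by the bookkeeping you anticipate.  Instead of restricting to loops and to $KP_0$, let $\mH$ be the set of homotopy classes of \emph{all} edge--paths in $Z_0(\Gam,\mG)$ based at $O$, and extend your label--reading to a map $\Phi\colon\mH\to GP(\Gam,\mG)$.  Your surjectivity construction already works for an arbitrary $g\in GP(\Gam,\mG)$ in normal form and produces a path (not a loop) from $O$; call this $\Psi(g)$.  One then checks that $\Psi$ is well--defined (two normal forms of $g$ differ by commutations, which are realised by square--moves in $Z_0$) and that $\Phi$ and $\Psi$ are set--theoretic inverses.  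This gives a bijection $\mH\leftrightarrow GP(\Gam,\mG)$ without ever isolating ``injectivity'' as a separate issue; restricting to loops on one side and to elements whose projection to each $G_i$ vanishes on the other yields $\pi_1(Z_0)\cong KP_0(\Gam,\mG)$.  Part~(2) is obtained by the same argument after pushing down along $Y_\Gam\to Z_\Gam$.

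One minor correction: with your edge label $\alpha_i(v_i+1)\alpha_i(v_i)^{-1}$, the product along a straight segment $0\to 1\to\cdots\to r$ read left to right does \emph{not} telescope when $G_i$ is nonabelian.  Use the paper's convention $\alpha_i(v_i)^{-1}\alpha_i(v_i+1)$ (equivalently $(g^{(j)}_k)^{-1}g^{(j)}_{k'}$), for which the left--to--right product along any $i$--segment from $a$ to $b$ is $\alpha_i(a)^{-1}\alpha_i(b)$, and then your formula for the target vertex in the surjectivity step becomes $\alpha_{i_\ell}^{-1}\bigl(\alpha_{i_\ell}(v_{i_\ell})\,h_\ell\bigr)$.
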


\begin{proof}
(1)
We use the notation described so far in this section.
Choose the origin $O\in \R^n$ as the base point.
Let $q_j\co \R^n\to\R$ denotes the natural projection onto the $j$-th component.
Enumerate $G_j = \{g^{(j)}_0=1,g^{(j)}_1,g^{(j)}_2,\ldots, g^{(j)}_{|G_j|-1}\}$ for each $j\le k$, and 
$G_j = \{\ldots,g^{(j)}_{-1},g^{(j)}_{0}=1,g^{(j)}_{1},\ldots\}$ for each $j>k$.
We let $\mH$ be the homotopy classes of the edge-paths in $Z_0(\Gam,\mG)$ starting from $O$.

We will first define a map $\Phi\co\mH\to GP(\Gam,\mG)$.
Let us consider an edge-path $\gamma\co[0,l]\to Z_0(\Gam, \mG)$ such that $\gamma(0)=O$ and $\gamma^{-1}(\Z^n\cap Z_0(\Gam,\mG))=\Z\cap[0,l]$.
For each $i=1,2,\ldots,l$, there uniquely exists $j$ such that $\gamma[i,i+1]$ is parallel to $\R e_j$;
then we put $k=q_j\circ\gamma(i), k'=q_j\circ\gamma(i+1)$ and $x_i = (g^{(j)}_k)^{-1}g^{(j)}_{k'} \in G_j$.
Note that $|k-k'|=1$.
If $\gamma[i,i+1]$ and $\gamma[i+1,i+2]$ span a $2$-cell in $Z_0(\Gam, \mG)$, then the groups containing $x_i$ and $x_{i+1}$ commute; that is, $x_i x_{i+1} = x_{i+1} x_i$. 
So we can define a map $\Phi\co \mH \to GP(\Gam,\mG)$ by setting $\Phi([\gamma])=x_1 x_2\cdots x_l$.

Conversely, suppose $1\ne g\in GP(\Gam,\mG)$ is given. 
The normal form theorem for graph products~\cite{green1990,HW1999} implies that we can write  $g=g^{(j_1)}_{k_1} g^{(j_2)}_{k_2}   \cdots g^{(j_l)}_{k_l}$ such that:
\be[(i)]
\item
$k_i\ne0$ for each $i=1,2,\ldots,l$; 
\item
$j_i\ne j_{i+1}$ for each $i=1,2,\ldots,l-1$;
\item
if $j_i = j_{i'}$ for some $i<i'$, then there exists $i<i''<i'$ such that $\{j_i,j_{i''}\}=\{j_{i'},j_{i''}\}\not\in E(\Gam)$.
\setcounter{saveenum}{\value{enumi}}
\ee
Let us fix $i\in\{1,2,\ldots,l\}$ and put $j=j_i$. There exist $k$ and $k'$ such that
\[ g^{(j)}_k = \prod_{1\le t<i\textrm{ and }j_{t}=j} g^{(j)}_{k_t}\quad \mbox{ and }\quad
g^{(j)}_{k'}= \prod_{1\le t\le i\textrm{ and }j_{t}=j} g^{(j)}_{k_t} = g^{(j)}_k g^{(j)}_{k_i}.\]
We inductively define $\gamma_i$ to be the edge-path in $\R^n$ 
starting from the end point of $\gamma_{i-1}$ and changing only its $j$-th coordinate from $k$ to $k'$. We set $O$ as the initial point of $\gamma_1$.
By defining $\Psi(g)=[\gamma_1 \gamma_2 \cdots \gamma_l]$, we have a map $\Psi\co GP(\Gam,\mG)\to \mH$.
Note that $\Psi$ is well-defined since two normal forms differ only by a finite sequence of swapping certain consecutive terms, which can also be realized as a homotopy in $Z_0(\Gam,\mG)$. It is clear that $\Psi$ is the (set-theoretic) inverse of $\Phi$.

Now if $1\ne g\in KP_0(\Gam,\mG)$, then we further have:
\be[(i)]
\setcounter{enumi}{\value{saveenum}}
\item $\prod_{1\le t\le l\textrm{ and }j_t = j} g^{(j)}_{k_t}$ is trivial in $G_j$ for each $j=1,2,\ldots,n$.
\ee
It is clear that $\Psi$ restricts to a group isomorphism from $KP_0(\Gam,\mG)$ onto $\pi_1(Z_0(\Gam,\mG))$.

(2) 
In the case when $1\ne g\in KP_f(\Gam,\mG)$, we have the following condition instead of (iv) above:
\be[(i)]
\setcounter{enumi}{\value{saveenum}}
\item[(iv)'] $\prod_{1\le t\le l\textrm{ and }j_t = j} g^{(j)}_{k_t}$ is trivial in $G_j$ for each $j=1,2,\ldots,k$.
\ee
Hence, the covering $Y_\Gam\to Z_\Gam$ projects $\Psi(g)$ to a homotopy class of a loop in $Z_f(\Gam,\mG)$.
We can check that the restriction of $\psi$ onto $KP_f(\Gam,\mG)$ is a group isomorphism onto $\pi_1(Z_f(\Gam,\mG))$.
\end{proof}

\begin{rem}\label{rem:kp}
The $\mH$ in the above proof depends only on the orders of the groups in $\mG$. Hence, $\Phi$ determines a bijection between $GP(\Gam,\{\Z_{|G_i|}\co i=1,2,\ldots,n\})$ and $GP(\Gam,\mG)$.
In particular, if all the groups in $\mG$ are infinite, then $\Phi$ induces a bijection between $A(\Gam)$ and $GP(\Gam,\mG)$.
\end{rem}

\begin{exmp}\label{exmp:kp}
Let us consider $G =(\Z_3\ast \Z_4)\times\Z$, which is regarded as the graph product in Figure~\ref{fig:exmp:kp} (a).
Then $KP_0(\Gam,\mG)=[G,G]$ since the vertex groups are abelian. Also, $KP_f(\Gam,\mG)=[\Z_3\ast \Z_4,\Z_3\ast \Z_4]\times \Z$ is a subgroup of $G$ with index $|\Z_3|\cdot|\Z_4| = 12$. 
If $\Lambda$ is the graph shown in Figure~\ref{fig:exmp:kp} (b), 
then we have $Z_0(\Gam,\mG)=\Lambda\times\R$ and  $Z_f(\Gam,\mG)=\Lambda\times S^1$.
\end{exmp}

\begin{figure}[htb!]
  \tikzstyle {a}=[red,postaction=decorate,decoration={%
    markings,%
    mark=at position 1 with {\arrow[red]{stealth};}}]
  \tikzstyle {b}=[postaction=decorate,decoration={%
    markings,%
    mark=at position .57 with {\arrow{stealth};},%
    }]
  \tikzstyle {v}=[draw,shape=circle,fill=black,inner sep=0pt]
  \tikzstyle {bv}=[black,draw,shape=circle,fill=black,inner sep=1pt]
  \tikzstyle{every edge}=[-,draw]
\subfloat[(a) $G$]{
	\begin{tikzpicture}[thick]
    	  \draw[]  (-2,0) node  [bv]  (am) {} node[below] {$\Z_3$} 
    	  --  (0,0) node [bv] (a0) {} node[below] {$\Z$}
	  --  (2,0) node [bv] (ap) {} node[below] {$\Z_4$};
	  \draw (0,-.7) node [] {};
	\end{tikzpicture}
	}
	\qquad\qquad\qquad\qquad
	\subfloat[(b) $\Lambda$]{
\begin{tikzpicture}[thick]
	\draw (-1.5,-1) -- (1.5,-1) -- (1.5,1) -- (-1.5,1) -- cycle;
	\draw (-.5,-1) -- (-.5,1);
	\draw (.5,-1) -- (.5,1);	
	\draw (-1.5,0) -- (1.5,0);
\end{tikzpicture}
	}
\caption{Example~\ref{exmp:kp}.\label{fig:exmp:kp}}
\end{figure}

\begin{proof}[Proof of Theorem~\ref{thm:v special}]
Let us use the notations in the proof of Theorem~\ref{thm:kp}.
Note that the compositions $Z_0(\Gam,\mG)\hookrightarrow Y_\Gam\to X_\Gam$ and $Z_f(\Gam,\mG)\hookrightarrow Z_\Gam\to X_\Gam$ are local isometries,
and that $Z_f(\Gam,\mG)$ is compact. Moreover, $[GP(\Gam,\mG):KP_f(\Gam,\mG)]=\prod_{i=1}^k |G_i|$ is finite.
\end{proof}

If $C\subseteq D$ are (possibly infinite) rectangular boxes in $\mathbb{R}^n$ whose vertices are lattice points,
then the inclusion $Y_\Gam\cap C\hookrightarrow Y_\Gam\cap D$ is a local isometry.
We note two immediate corollaries of Theorem~\ref{thm:kp}.

\begin{cor}\label{cor:gpk}
\be 
\item
The graph product kernel of countable groups embeds into $[A(\Gam),A(\Gam)]$.
\item
The graph product of finite or cyclic groups virtually embeds into $A(\Gam)$.
\ee\end{cor}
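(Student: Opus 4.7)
The plan is to combine Theorem~\ref{thm:kp} with the rectangular-box local isometry observation stated immediately before the corollary, and with Lemma~\ref{npc inj}, which promotes local isometries of cube complexes into $\pi_1$--injections.

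For part (1), I would take the box $C = \prod_{i=1}^k [0,|G_i|-1] \times \R^{n-k} \subseteq \R^n$, whose vertices are lattice points. Then $Z_0(\Gam,\mG) = Y_\Gam \cap C$, so by the observation just above the corollary the inclusion $Z_0(\Gam,\mG) \hookrightarrow Y_\Gam$ is a local isometry. Since $Y_\Gam$ is NPC, Lemma~\ref{npc inj} makes this inclusion $\pi_1$--injective. Composing the isomorphism $\pi_1(Z_0(\Gam,\mG)) \cong KP_0(\Gam,\mG)$ of Theorem~\ref{thm:kp}(1) with the classical identification $\pi_1(Y_\Gam) \cong [A(\Gam),A(\Gam)]$ recalled in this section yields the embedding $KP_0(\Gam,\mG) \hookrightarrow [A(\Gam),A(\Gam)]$.

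For part (2), it suffices to embed $KP_f(\Gam,\mG)$ into $A(\Gam)$, since $[GP(\Gam,\mG):KP_f(\Gam,\mG)] = \prod_{i=1}^k|G_i|$ is finite. The proof of Theorem~\ref{thm:v special} already records that $Z_f(\Gam,\mG) \hookrightarrow Z_\Gam \to X_\Gam$ is a local isometry; then Lemma~\ref{npc inj} gives $\pi_1$--injectivity of this composition, and identifying $\pi_1(Z_f(\Gam,\mG)) \cong KP_f(\Gam,\mG)$ via Theorem~\ref{thm:kp}(2) produces the desired finite-index embedding into $\pi_1(X_\Gam) = A(\Gam)$.

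The whole argument is essentially a packaging of machinery already in hand: Theorem~\ref{thm:kp} supplies cube-complex models for the two kernels, the rectangular-box observation places them locally isometrically inside $Y_\Gam$ or $Z_\Gam$, and Lemma~\ref{npc inj} converts that into $\pi_1$--injectivity. The one subtle point worth checking is that the box observation extends from $\R^n$ to the mixed setting $\R^k \times (S^1)^{n-k}$; this is immediate by lifting through the covering $Y_\Gam \to Z_\Gam$ in Figure~\ref{fig:kp}, since the preimage of $Z_f(\Gam,\mG)$ is exactly $Z_0(\Gam,\mG)$, so the local structure of $Z_f(\Gam,\mG) \hookrightarrow Z_\Gam$ mirrors that of $Z_0(\Gam,\mG) \hookrightarrow Y_\Gam$.
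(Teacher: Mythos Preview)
Your proposal is correct and follows exactly the route the paper intends: the corollary is stated there as an immediate consequence of Theorem~\ref{thm:kp}, the rectangular-box local-isometry observation, and Lemma~\ref{npc inj}, and you have unpacked precisely those ingredients. The extra remark about transporting the box observation to $\R^k\times(S^1)^{n-k}$ via the covering $Y_\Gam\to Z_\Gam$ is a welcome clarification that the paper leaves implicit.
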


\begin{cor}\label{cor:inclusion}
Let $\mG=\{G_v\co v\in V(\Gam)\}$
and $\mG'=\{G'_v\co v\in V(\Gam)\}$ be collection of countable groups.
\begin{enumerate}
\item
If $|G_v|\le |G'_v|\le\infty$ for each vertex $v$, then $KP_0(\Gam,\mG)$ embeds into $KP_0(\Gam,\mG' )$.
If we further assume that $\mG$ and $\mG'$ consist of finite or cyclic groups,
then $KP_f(\Gam,\mG)$ embeds into $KP_f(\Gam,\mG' )$.
\item
If $|G_v| = |G'_v|\le\infty$ for each vertex $v$, then $KP_0(\Gam,\mG)\cong KP_0(\Gam,\mG' )$.
\end{enumerate}
\end{cor}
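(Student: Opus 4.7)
The plan is to realize $KP_0$ and $KP_f$ as fundamental groups of explicit cube complexes via Theorem~\ref{thm:kp}, and then exhibit local isometries between the corresponding complexes; Lemma~\ref{npc inj} will then yield $\pi_1$-injectivity.

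For part~(1) with $KP_0$, the hypothesis $|G_v|\le |G'_v|$ forces the set $V'_f$ of vertices $v$ with $G'_v$ finite to be contained in the analogous set $V_f$ for $\mG$, since $G_v$ infinite implies $G'_v$ infinite. A coordinate-wise check then shows that the box defining $Z_0(\Gam,\mG)$ lies inside the box defining $Z_0(\Gam,\mG')$: at $v\in V'_f$, $[0,|G_v|-1]\subseteq [0,|G'_v|-1]$; at $v\in V_f\ssm V'_f$, $[0,|G_v|-1]\subseteq\R$; elsewhere both factors are $\R$. By the box-intersection local isometry fact noted just before the corollary, the inclusion $Z_0(\Gam,\mG)\hookrightarrow Z_0(\Gam,\mG')$ is a local isometry, and Theorem~\ref{thm:kp}(1) then produces the embedding.

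For part~(1) with $KP_f$, the complexes $Z_f(\Gam,\mG)$ and $Z_f(\Gam,\mG')$ live in different ambient products of $\R$-factors and circles, so I will use an intermediate cover. Set $V_0=V_f\ssm V'_f$ and let $\pi\co Z_\Gam(\mG)\to Z_\Gam(\mG')$ denote the covering induced by quotienting each $\R$-factor at $v\in V_0$ to $S^1$. The preimage $W=\pi^{-1}(Z_f(\Gam,\mG'))$ equals $Z_\Gam(\mG)\cap D'$, where $D'=\prod_{v\in V'_f}[0,|G'_v|-1]\times\prod_{v\in V_0}\R\times (S^1)^{V(\Gam)\ssm V_f}$, and $D'$ coordinate-wise contains the box defining $Z_f(\Gam,\mG)$. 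Since the box-intersection local isometry fact transfers from $Y_\Gam$ to $Z_\Gam(\mG)$ (local isometry being a local property, detected in the cover $Y_\Gam\to Z_\Gam(\mG)$), the inclusion $Z_f(\Gam,\mG)\hookrightarrow W$ is a local isometry, hence $\pi_1$-injective. Composing with the covering $W\to Z_f(\Gam,\mG')$ and invoking Theorem~\ref{thm:kp}(2) gives $KP_f(\Gam,\mG)\hookrightarrow KP_f(\Gam,\mG')$.

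Part~(2) is immediate: when $|G_v|=|G'_v|$ for each $v$, one has $V_f=V'_f$ and the boxes defining $Z_0(\Gam,\mG)$ and $Z_0(\Gam,\mG')$ coincide, so the two cube complexes are equal as subcomplexes of $Y_\Gam$, and Theorem~\ref{thm:kp}(1) identifies their fundamental groups. The main technical point is the $KP_f$ case of part~(1), where the intermediate cover $W$ must be introduced to reconcile the mismatched ambient spaces; the remaining cases reduce to direct box-inclusion arguments.
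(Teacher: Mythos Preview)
Your argument is correct and follows exactly the route the paper intends: the paper states the box-inclusion local-isometry fact for $Y_\Gam$ immediately before the corollary and then declares it an ``immediate corollary'' of Theorem~\ref{thm:kp}, which is precisely the $KP_0$ argument you wrote. For the $KP_f$ case you have supplied the detail the paper suppresses---namely, the intermediate cover needed because $Z_\Gam$ depends on which vertex groups are finite---and your transfer of the local-isometry fact to $Z_\Gam(\mG)$ via the cover $Y_\Gam\to Z_\Gam(\mG)$ is valid.
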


\begin{lem}\label{lem:inclusion}
Each finitely generated subgroup of $[A(\Gam),A(\Gam)]$ embeds into $[GP_m(\Gam),GP_m(\Gam)]$ for some $2\le m < \infty$.
\end{lem}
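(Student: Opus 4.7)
The plan is to translate the statement, via Theorem~\ref{thm:kp}(1), into a geometric question about the cube complex $Y_\Gam \subseteq \R^n$ and its lattice sub-boxes. Recall that $\pi_1(Y_\Gam) \cong [A(\Gam),A(\Gam)]$, and that $\pi_1\bigl(Y_\Gam \cap [0,m-1]^n\bigr) = \pi_1(Z_0(\Gam,\{\Z_m\}_v)) \cong [GP_m(\Gam),GP_m(\Gam)]$. The strategy is simply that a finite collection of combinatorial loops occupies only finitely many cells of $Y_\Gam$, so it fits inside some large lattice box $[0,m-1]^n$ after a translation, and the local isometry property noted just before Corollary~\ref{cor:gpk} converts the inclusion of that box into a $\pi_1$-injection.

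Concretely, given a finitely generated $H \le [A(\Gam),A(\Gam)]$, I first use Theorem~\ref{thm:kp}(1) to realize $H$ as a subgroup of $\pi_1(Y_\Gam, O)$, and pick based combinatorial edge-loops $\gamma_1,\dots,\gamma_k$ at $O$ whose classes generate $H$. Their union is compact, so there is a rectangular box $B = \prod_{i=1}^n [a_i,b_i] \subseteq \R^n$ with lattice vertices containing $O$ and every $\gamma_i$. Setting $m = 1 + \max_i(b_i - a_i)$ ensures that the $\Z^n$-translate $B - (a_1,\dots,a_n)$ lies in $[0,m-1]^n$.

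Next, I invoke the observation (stated immediately before Corollary~\ref{cor:gpk}) that for any lattice rectangular boxes $C \subseteq D$, the inclusion $Y_\Gam \cap C \hookrightarrow Y_\Gam \cap D$ is a local isometry, hence $\pi_1$-injective by Lemma~\ref{npc inj}. Applied to $B \subseteq \R^n$, this says $\pi_1(Y_\Gam \cap B) \hookrightarrow \pi_1(Y_\Gam)$, and since each $\gamma_i$ already lives in $Y_\Gam \cap B$, the subgroup $H$ is carried by $\pi_1(Y_\Gam \cap B)$. Translation by $-(a_1,\dots,a_n) \in \Z^n$ is a cellular isometry of $\R^n$ preserving $Y_\Gam$, so it gives a cellular isomorphism $Y_\Gam \cap B \to Y_\Gam \cap (B-(a_1,\dots,a_n))$, which then includes (again as a local isometry) into $Y_\Gam \cap [0,m-1]^n = Z_0(\Gam,\{\Z_m\}_v)$. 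Composing, I obtain a $\pi_1$-injective combinatorial map $Y_\Gam \cap B \to Z_0(\Gam,\{\Z_m\}_v)$, and Theorem~\ref{thm:kp}(1) identifies the target's fundamental group with $[GP_m(\Gam),GP_m(\Gam)]$. Therefore $H$ embeds into $[GP_m(\Gam),GP_m(\Gam)]$.

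The only subtlety is the base-point shift induced by the translation, but this is harmless because the conclusion is an abstract group embedding and $\pi_1$ of a path-connected complex is determined up to inner automorphism. I do not foresee any genuine obstacle: the argument is purely that finite generation localizes the problem to a compact piece of $Y_\Gam$, and the cubical lattice geometry supplies the needed local isometries for free.
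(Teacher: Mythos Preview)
Your proof is correct and follows essentially the same approach as the paper: realize $H$ by finitely many edge-loops in $Y_\Gam$, trap them in a finite lattice box, and use the $\pi_1$-injectivity of the inclusion into $Y_\Gam\cap[0,m-1]^n$ coming from the local isometry observation before Corollary~\ref{cor:gpk}. The only difference is cosmetic: the paper simply asserts that the union $B$ of the generating edge-paths lies in some $[0,m-1]^n$ and records the factorization $H\to\pi_1(Y_\Gam\cap[0,m-1]^n)\to\pi_1(Y_\Gam)$, whereas you spell out the intermediate translation by an element of $\Z^n$ and the base-point bookkeeping explicitly.
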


\begin{proof}
Let $H$ be a finitely generated subgroup of $[A(\Gam),A(\Gam)]$ and $B$ be the union of edge-paths in $Y_\Gam$ that correspond to the generators of $H$. For $n=|V(\Gam)|$, there exists $2\le m<\infty$ such that $B$ is contained in $[0,m-1]^n$.
Then $H\to [A(\Gam),A(\Gam)]$ factors as $H\to \pi_1(Y_\Gam\cap[0,m-1]^n)=[GP_m(\Gam),GP_m(\Gam)]\to \pi_1(Y_\Gam)= [A(\Gam),A(\Gam)]$.
\end{proof}

\begin{proof}[Proof of Theorem~\ref{thm:artin coxeter}]
For each of (1) and (2), only one direction of the assertion is not obvious. We will follow the notations in the proof of Theorem~\ref{thm:kp}.

(1)
Suppose $C(\Gam)\in\mS$.
If $\mG$ consists of nontrivial groups,
Corollary~\ref{cor:inclusion} implies that $[C(\Gam),C(\Gam)]$ embeds into $KP_0(\Gam,\mG)$. Note that $[C(\Gam),C(\Gam)]$ is in $\mS$ since $[C(\Gam)\co [C(\Gam),C(\Gam)]]<\infty$.

(2) Suppose $\mG$ consists of cyclic groups and $GP(\Gam,\mG)\in\mS$.
Then $KP_f(\Gam,\mG)$ is in $\mS$ since it is a finite-index subgroup of $GP(\Gam,\mG)$.
The conclusion follows since $KP_f(\Gam,\mG)\le\pi_1(Z_\Gam)\le A(\Gam)$. 

(3) By Lemma~\ref{lem:inclusion}, $[A(\Gam),A(\Gam)]\in\mS$ if and only if $GP_m(\Gam)\in\mS$ for sufficiently large $m$. Also note that if $\mG$ consists of finite groups, then $KP_0(\Gam,\mG)$ embeds into $GP_m(\Gam)$ for sufficiently large $m$.
\end{proof}

\begin{proof}[Proof of Corollary~\ref{cor:cycle} (1)]
Since $C(C_m)$ is a cocompact Fuchsian group, it contains a hyperbolic surface group.
Apply Theorem~\ref{thm:artin coxeter} (1).
\end{proof}

\section{Doubles and co-contractions}\label{sec:double} 
Suppose $A$ and $B$ are groups.
For an isomorphism $\psi\co C\to D$ where $C\le A$ and $D\le B$, we let $A\ast_\psi B$ denote the free product of $A$ and $B$ amalgamated along $\psi$.
If $\psi\co C\to C'$ is an isomorphism for some $C,C'\le A$, then the HNN extension of $A$ along $\psi$ is denoted as $A\ast_\psi$.
\begin{lem}\label{lem:phi}
Suppose $G$ is a group, $\psi\co H\to H$ is an isomorphism for some $H\le G$ and $2\le k<\infty$.
Let $G_k = G\ast_\psi\cdots\ast_\psi G$ where there are $k$ copies of $G$.
We denote the stable generator of $G\ast_\psi$ by $t$.
\begin{enumerate}[(1)]
\item The group $G_k$ embeds into $G\ast_{\psi}/\fform{t^k}$ as a subgroup of index $k$.
\item The group $G\ast_{\psi}/\fform{t^k}$ virtually embeds into $G\ast_\psi$.
\end{enumerate}
\end{lem}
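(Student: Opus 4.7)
The plan is to establish (1) by the Reidemeister--Schreier rewriting method applied to the quotient, and then to deduce (2) by exhibiting an explicit Bass--Serre type embedding of $G_k$ into $G\ast_\psi$.

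For (1), writing $Q:=G\ast_\psi/\fform{t^k}$, I would first consider the surjection $\bar\eta\co Q\to\Z/k\Z$ sending $g\in G$ to $0$ and $t$ to $1$, and let $K:=\ker\bar\eta$, which has index $k$ in $Q$. Applying the Reidemeister--Schreier procedure to the presentation $\langle G,t\mid t^{-1}ht=\psi(h)\ (h\in H),\ t^k\rangle$ of $Q$ with the Schreier transversal $\{1,t,\ldots,t^{k-1}\}$, the Schreier generators of $K$ turn out to be $g^{(i)}:=t^igt^{-i}$ for $g\in G$ and $0\le i<k$, yielding $k$ formal copies of $G$. Rewriting the HNN relation $t^{-1}ht\cdot\psi(h)^{-1}=1$ at each coset representative $t^i$ with $1\le i<k$ produces the amalgamation relation $h^{(i-1)}=\psi(h)^{(i)}$, and rewriting at the identity coset (where the relation $t^k=1$ plays a crucial role) produces the wrap-around relation $h^{(k-1)}=\psi(h)^{(0)}$. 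Together with the $k$ copies of the defining relations of $G$, these are exactly the defining relations of the iterated amalgam $G_k$, so $K\cong G_k$.

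For (2), having (1) in hand it suffices to exhibit an embedding $G_k\hookrightarrow G\ast_\psi$, since then $G_k$ would be a common finite-index subgroup of $Q$ and (a subgroup of) $G\ast_\psi$. The natural candidate is $L:=\langle G,tGt^{-1},\ldots,t^{k-1}Gt^{-(k-1)}\rangle\le G\ast_\psi$. I plan to invoke Bass--Serre theory for the action of $G\ast_\psi$ on its Bass--Serre tree (on which $G$ fixes a vertex $v_0$ and $t$ acts by unit shift): the subgroup $L$ acts on the finite subpath $[v_0,t^{k-1}v_0]$ with vertex stabilizers $t^iGt^{-i}$ and edge stabilizers the corresponding conjugates of $H$, and hence by Bass--Serre reconstruction $L$ carries the same iterated amalgam structure as $G_k$. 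Composing the resulting isomorphism $G_k\cong L$ with the inclusion from (1) yields a finite-index subgroup of $Q$ embedding into $G\ast_\psi$.

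The most delicate point lies in (1): the Reidemeister--Schreier rewriting at the identity coset must be carefully tracked to produce the wrap-around relation $h^{(k-1)}=\psi(h)^{(0)}$, which arises only through the interplay between the HNN relation $t^{-1}ht=\psi(h)$ and the relation $t^k=1$ in $Q$. This cyclic identification is precisely what gives $G_k$ its iterated amalgam structure, and reconciling it with the path-like structure of $L$ arising in the Bass--Serre argument for (2) is the main technical content of the lemma.
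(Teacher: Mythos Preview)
Your overall strategy coincides with the paper's: both arguments realise part (1) via the $k$-fold cyclic cover of a classifying space for $G\ast_\psi/\fform{t^k}$ (the paper draws this as a graph of groups with covering maps $p_1,p_2$, you compute it by Reidemeister--Schreier), and both deduce (2) by embedding $G_k$ into $G\ast_\psi$ as the subgroup generated by $G, tGt^{-1},\ldots,t^{k-1}Gt^{-(k-1)}$ (the paper factors this through the cyclic graph of groups $L_k=G_k\ast_\psi$, you phrase it via the Bass--Serre tree). So your approach is not genuinely different from the paper's; it is a faithful algebraic translation of the same topological picture.

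There is, however, a genuine gap in your part (1), and in fairness the paper's own argument shares it. Your Reidemeister--Schreier rewriting is correct: the kernel $K$ is presented by $k$ copies of $G$ together with the path relations $h^{(i-1)}=\psi(h)^{(i)}$ for $1\le i<k$ \emph{and} the wrap-around relation $h^{(k-1)}=\psi(h)^{(0)}$. But the path relations already force $h^{(k-1)}=\psi^{-(k-1)}(h)^{(0)}$ in $G_k$, so the wrap-around relation is redundant precisely when $\psi^k=\mathrm{id}_H$ holds in $G$. For general $\psi$ it is not redundant, and then $K$ is a proper quotient of $G_k$ rather than $G_k$ itself. For a concrete failure take $G=H=\Z=\langle a\rangle$, $\psi(a)=a^{-1}$ and $k=3$: one computes $G\ast_\psi/\fform{t^3}\cong\Z/6$, whereas $G_3\cong\Z$, so $G_3$ cannot embed at all and statement (1) is simply false as written. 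Your final paragraph correctly flags the ``cyclic versus path'' tension but does not resolve it; it cannot be resolved without the hypothesis $\psi^k=\mathrm{id}_H$, which does hold in every application the paper makes of this lemma since there $\psi=\mathrm{id}_H$.
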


\begin{proof}
Let $L_k = G_k\ast_\psi$, whose stable generator is denoted by $s$.
The groups in the first line of Figure~\ref{fig:double} are illustrated as graphs of groups, where each vertex corresponds to $G$ and each directed edge corresponds to $\psi\co H\to H$; (b) shows the case when $k=5$ as an example.
The number $1\times k$ in (e) means that $G\ast_\psi/\fform{t^k}$ is obtained from $G\ast_\psi$ by gluing (to a classifying space of $G\ast_\psi$) $D^2$ such that $[\partial D^2]$ and $t^k$ are identified.
Similarly, $G_k \cong L_k/\fform{s}$ in (d) is obtained from $L_k$ by attaching $k$ copies of $D^2$, whose boundary curves are all identified with $s$; this is described as $k\times 1$.
The figure shows a commutative diagram, where $p_1$ and $p_2$ are induced by covering maps.
Now for (1), note that $p_2$ is injective.
(2) follows from that $G_k$ embeds into $L_k$ and that $p_1$ is injective.
\end{proof}

The following is a special case of Lemma~\ref{lem:phi} (2).

\begin{cor}\label{cor:v embed}
Let $G$ and $K$ be graph products of groups such that $K$ is obtained from $G$ by replacing a finite cyclic vertex group of $G$ by $\Z$. Then $G$ virtually embeds into $K$.
\end{cor}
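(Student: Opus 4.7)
The plan is to realize both $G$ and $K$ as an HNN extension (and a quotient of it) over a common base group, and then apply Lemma~\ref{lem:phi}(2) directly. Let $\Gam$ be the defining graph of the graph products, and let $v\in V(\Gam)$ be the vertex whose finite cyclic vertex group $G_v$ is being replaced; set $k=|G_v|$. Let $H$ denote the graph product obtained by deleting $v$ from the defining data of $G$ (equivalently, of $K$), and let $L\le H$ be the subgroup generated by the vertex groups indexed by $\lk(v)$. Since induced subgraphs of $\Gam$ give rise to canonically embedded subproducts, $L$ is itself a graph product sitting inside $H$. Let $\psi\co L\to L$ be the identity.

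The key step is the identification $K\cong H\ast_\psi$ and $G\cong (H\ast_\psi)/\fform{t^k}$, where $t$ denotes the stable generator. Indeed, writing the $\Z$ vertex group in $K$ as $\form{t}$, the standard presentation of $K$ as a graph product is obtained from a presentation of $H$ by adjoining the generator $t$ together with the relations $[t,g]=1$ for every generator $g$ of every vertex group $G_w$ with $w\in\lk(v)$. These commutation relations are equivalent to the family $tlt^{-1}=\psi(l)$ for all $l\in L$, which is exactly the defining relation of the HNN extension $H\ast_\psi$. Replacing the $\Z$ at $v$ by $\Z_k$ adds only the single relation $t^k=1$, which gives $G\cong (H\ast_\psi)/\fform{t^k}$.

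With these two identifications in hand, Lemma~\ref{lem:phi}(2), applied with $H$ playing the role of the ``$G$'' in the lemma, immediately yields that $G\cong (H\ast_\psi)/\fform{t^k}$ virtually embeds into $H\ast_\psi\cong K$, as required. The only nontrivial point is the HNN identification of $K$, which is a routine side-by-side comparison of the defining presentations of a graph product and an HNN extension, and presents no real obstacle.
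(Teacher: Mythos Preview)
Your proof is correct and is exactly the argument the paper intends: the paper merely says the corollary is a special case of Lemma~\ref{lem:phi}(2), and you have spelled out the standard identification $K\cong H\ast_L$ and $G\cong(H\ast_L)/\fform{t^k}$ that makes this special case explicit. There is nothing to add.
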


\begin{exmp}\label{exmp:v embed}
\be
\item
Consider the graph product $G$ shown in Figure~\ref{fig:exmp:kp} (a).
Suppose $K$ is obtained from $G$ by substituting $\Z$ for $\Z_4$. Then $G$ virtually embeds into $K$.
\item
We can apply Lemma~\ref{lem:phi} (2) repeatedly to see that $C(\Gam)$ virtually embeds into $A(\Gam)$.
This also follows from the well-known fact that $[C(\Gam),C(\Gam)]\le [A(\Gam),A(\Gam)]$; see~\cite{droms2003}. 
\ee
\end{exmp}

We let $G\ast_H G$ denote the free product of two copies of $G$ amalgamated along $id_H$. 
The HNN extension of $G$ along $id_H$ is denoted as $G\ast_H$.

\begin{lem}\label{lem:double}
Let $G$ be a group, $H\le G$ and $k\in\Z\ssm\{1,-1\}$.
Then $G\ast_H G$ embeds into $G\ast_H/ \fform{t^k}$.
\end{lem}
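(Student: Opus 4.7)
The plan is to realise $G\ast_H/\fform{t^k}$ as an amalgamated free product and then embed $G\ast_H G$ into it by conjugating one copy of $G$ by the stable letter $t$. Let $T=\form{t\mid t^k}$, which is finite cyclic of order $|k|$ when $k\ne 0$ and equals $\Z$ when $k=0$. My first step is to identify $G\ast_H/\fform{t^k}\cong G\ast_H(H\times T)$, where $H\le G$ is amalgamated with $H\times\{1\}\le H\times T$. This is a comparison of presentations: both sides are $\form{G,t\mid tht^{-1}=h\text{ for }h\in H,\ t^k=1}$, the relations on the left coming from the HNN structure and the normal closure $\fform{t^k}$, and on the right from $t$ centralising $H$ inside the direct factor $T$ together with the relation $t^k=1$.

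Next I would define $\phi\co G\ast_H G\to G\ast_H(H\times T)$ by the identity on the first copy of $G$ and by $g\mapsto tgt^{-1}$ on the second copy. Well-definedness on the amalgamation holds because $tht^{-1}=h$ in $H\times T$ for every $h\in H$, so the two copies of $H$ in $G\ast_H G$ are sent to the same element.

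The main content, and the only place where the hypothesis $k\ne\pm 1$ enters, is the injectivity of $\phi$. Given a nontrivial reduced form $w=g_1 g_2\cdots g_n\in G\ast_H G$ with $g_i\in G\ssm H$ and consecutive $g_i$'s lying in different copies of $G$, the image $\phi(w)$ spells out as an alternating word in the amalgamated product $G\ast_H(H\times T)$; for instance, if $g_1,g_3$ lie in the first copy and $g_2,g_4$ in the second, then $\phi(w)=g_1\cdot t\cdot g_2\cdot t^{-1}\cdot g_3\cdot t\cdot g_4\cdot t^{-1}$, and the other parities are analogous. The $G$--syllables lie in $G\ssm H$ by the normal form of $w$, while each $(H\times T)$--syllable is a nontrivial power of $t$ and therefore lies outside $H\times\{1\}=H$ provided $T$ is nontrivial, which is exactly the condition $k\ne\pm 1$. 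Hence $\phi(w)$ is a reduced word of positive length in the amalgamated product, and the normal form theorem for amalgamated free products delivers $\phi(w)\ne 1$. The only delicate point is verifying that the interstitial $t^{\pm 1}$ stay out of $H$; without $T$ nontrivial this fails, and indeed the statement becomes false for $k=\pm 1$, since then the quotient already kills $t$ and $\phi$ collapses.
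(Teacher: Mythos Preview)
Your argument is correct. The identification $G\ast_H/\fform{t^k}\cong G\ast_H(H\times T)$ with $T=\form{t\mid t^k}$ is a straightforward presentation comparison, the map $\phi$ is well defined for the reason you give, and the normal-form computation in $G\ast_H(H\times T)$ goes through exactly as you describe: each $G$--syllable lies in $G\ssm H$ because the original word was reduced, and each $(H\times T)$--syllable is $t^{\pm1}\in T\ssm\{1\}\subseteq (H\times T)\ssm H$ precisely when $k\ne\pm1$. (The trivial case $w\in H\ssm\{1\}$, where $\phi$ is the identity, should be mentioned for completeness but causes no difficulty.)

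The paper's proof takes a different, more topological route. For $k\ne0$ it appeals to Lemma~\ref{lem:phi}(1): writing $G_{|k|}=G\ast_H\cdots\ast_H G$ for the amalgam of $|k|$ copies, that lemma realises $G_{|k|}$ as the index-$|k|$ subgroup of $G\ast_H/\fform{t^k}$ arising from the $|k|$--fold cyclic cover of the underlying graph of groups, and then $G\ast_H G\le G_{|k|}$ is the obvious sub-graph-of-groups inclusion. The case $k=0$ is handled separately by citing the classical fact that the double embeds in the HNN extension. Your approach is more elementary and handles all $k\ne\pm1$ uniformly, needing only the normal-form theorem for a single amalgamated product; the paper's approach, by contrast, also produces the finite-index information $[G\ast_H/\fform{t^k}:G_{|k|}]=|k|$, which is used elsewhere.
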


\begin{proof}
The case when $k\ne0$ follows from Lemma~\ref{lem:phi} (1). 
For $k=0$, consider the first line of Figure~\ref{fig:double}.
We remark that $k=0$ case is already well-known; see \cite[p.187]{LS2001}.
\end{proof}

\begin{figure}[htb!]
  \tikzstyle {a}=[red,postaction=decorate,decoration={%
    markings,%
    mark=at position 1 with {\arrow[red]{stealth};}}]
  \tikzstyle {b}=[postaction=decorate,decoration={%
    markings,%
    mark=at position .57 with {\arrow{stealth};},%
    }]
  \tikzstyle {v}=[draw,shape=circle,fill=black,inner sep=0pt]
  \tikzstyle {bv}=[black,draw,shape=circle,fill=black,inner sep=1pt]
  \tikzstyle{every edge}=[-,draw]
\xymatrix{
	\subfloat[(a) $G\ast_\psi G$]{
	\begin{tikzpicture}[thick]
    	  \draw[b]  (-1,0) node  [bv]  (am) {} node[below] {} 
    	  -- node[midway,above] {} (1,0) node [bv] (ap) {} node[below] {};
	  \draw (0,.5) node [] {$\xleftarrow{id_H}  {\scriptstyle H} \xrightarrow{\psi}$};
	  \draw (0,-.5) node [] {};
	\end{tikzpicture}
	}
	\qquad
	\ar@<4ex>@{^(->}^{\textrm{incl}_*}[rr]
	&&
	\qquad
	\subfloat[(b) $L_k$]{
	\begin{tikzpicture}[thick]
   		\foreach \i in {0,...,4} 
			\draw (360/5*\i+90:1) node [bv] (v\i) {} ;
		\draw[b] (v1)--(v2);
		\draw[b] (v2)--(v3);
		\draw[b] (v4)--(v0);
		\draw[b] (v0)--(v1);
		\draw[b] (v4)--(v3);
	\end{tikzpicture}
	}
	\qquad
	\ar@<4ex>@{^(->}^---{p_1}_---{s\;\mapsto\; t^k}[rr]
	\ar@<-1ex>[d]_--{\textrm{incl}_*} 
	&&
	\qquad
  	\subfloat[(c) $G\ast_\psi$]{
  	\begin{tikzpicture}[thick]
		\draw (0,0) node[bv] {};
		\draw (0,-.4) node[] {};
		\draw [out=30,in=90] (0,0) edge  (2,0);
		\draw [out=30,in=90] (0,0) edge  (2,0);					
		\draw (2.2,0) node [right] {}; 
		\draw [out=-30,in=-90] (0,0) edge  (2,0);	
		\draw [b] (2,-.1) -- (2,.1);				
	\end{tikzpicture}
	}
	\ar^--{\textrm{incl}_*}[d]
	\\
	\qquad
	&&
	\qquad
	\subfloat[(d) $G_k\cong L_k/\fform{s}$]{
	 	\begin{tikzpicture}[thick]
   		\foreach \i in {0,...,4} 
			\draw (360/5*\i+90:1) node [bv] (v\i) {} ;
		\draw[b] (v1)--(v2);
		\draw[b] (v2)--(v3);
		\draw[b] (v4)--(v0);
		\draw[b] (v0)--(v1);
		\draw[b] (v4)--(v3);
		\draw (0,0) node [] {$\scriptstyle\textcolor{red}{k}\times \textcolor{blue}{1}$};
	 	\end{tikzpicture}
	}
	\qquad
	\ar@<4ex>@{^(->}^---{p_2}[rr]\ar@<-1ex>@{_(->}[u]
	&&
	\qquad
	\subfloat[(e) $G\ast_\psi/\fform{t^k}$]{
  	\begin{tikzpicture}[thick]
		\draw (0,0) node[bv] {};
		\draw (0,-.4) node[] {$\scriptstyle G$};
		\draw [out=30,in=90] (0,0) edge  (2,0);
		\draw [out=30,in=90] (0,0) edge  (2,0);					
		\draw [out=-30,in=-90] (0,0) edge  (2,0);	
		\draw (1.2,0) node [] {$\scriptstyle \textcolor{red}{1}\times \textcolor{blue}{k}$};
		\draw [b] (2,-.1) -- (2,.1);						
	\end{tikzpicture}
	}
}
\caption{Lemma~\ref{lem:phi} and~\ref{lem:double}.\label{fig:double}}
\end{figure}

Let us consider a graph isomorphism $\mu \co \Gam \to \Gam'$. 
Fix a vertex $t\in V(\Gam)$ and put $\Lambda$ the subgraph of $\Gam$ induced by $\lk(t)$.
Define $\Gam''$ as the graph obtained from the union of $\Gam\ssm \st(t)$ and $\Gam'\ssm \st(\mu(t))$ after identifying $\Lambda$ and $\mu(\Lambda)$ by $\mu$.
Here, $\st(v)$ denotes the open star of a vertex $v$.
We call $\Gam''$ as the \emph{double of $\Gam\ssm\st(t)$ along the link of $t$}.
There is a natural projection map $\rho\co \Gam''\to \Gam\ssm\st(t)$.

\begin{lem}\label{lem:double}
If $\mG = \{ G_v \co v\in V(\Gam)\}$ is a collection of nontrivial groups and $\Gam''$ is the double of $\Gam\ssm\st(t)$ along the link of $t$,
then there is an embedding from
$GP(\Gam'',\{G_{\rho(u)}\co u\in V(\Gam'')\})$ into $GP(\Gam,\mG)$.
\end{lem}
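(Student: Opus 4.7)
My plan is to realize both $GP(\Gam,\mG)$ and $GP(\Gam'',\{G_{\rho(u)}\})$ as amalgamated free products over a common subgroup $H$, and then construct the embedding by conjugating a subfactor by a nontrivial element of $G_t$.

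Set $\Delta = \Gam \ssm \st(t)$, the induced subgraph of $\Gam$ on $V(\Gam) \ssm \{t\}$, and write $K := GP(\Delta, \mG|_\Delta)$ and $H := GP(\Lambda, \mG|_\Lambda)$. Let $\Sigma$ be the induced subgraph of $\Gam$ on $\{t\}\cup \lk(t)$, so that $GP(\Sigma,\mG|_\Sigma) = G_t \times H$. Then $\Gam = \Sigma \cup \Delta$ with $\Sigma \cap \Delta = \Lambda$, and since $t$ has no neighbor outside $\lk(t)$, the standard amalgamated decomposition of a graph product along a separating induced subgraph (Green) gives
\[
GP(\Gam,\mG) \;=\; (G_t \times H) \ast_H K.
\]
The same decomposition applied to $\Gam''$, which is the union of two copies of $\Delta$ intersecting in $\Lambda$, gives
\[
GP(\Gam'',\{G_{\rho(u)}\}) \;=\; K \ast_H K.
\]

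Choose a nontrivial $g \in G_t$; such an element exists because $\mG$ consists of nontrivial groups. Since every vertex of $\Lambda$ is adjacent to $t$ in $\Gam$, the element $g$ commutes pointwise with $H$, so the two maps $K \hookrightarrow GP(\Gam,\mG)$ (inclusion) and $K \to GP(\Gam,\mG)$, $k\mapsto gkg^{-1}$, agree on the amalgamating subgroup $H$. They therefore assemble into a well-defined homomorphism
\[
\phi \co K \ast_H K \longrightarrow GP(\Gam,\mG).
\]

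For injectivity, take $w = h \cdot k_1 k_2 \cdots k_n$ in the amalgamated normal form on $K \ast_H K$, with $h \in H$ and each $k_i \in K \ssm H$ alternating between the two copies of $K$. Then $\phi(w) = (hk_1)\cdot g^{\pm1}\cdot k_2 \cdot g^{\mp1}\cdot k_3 \cdots$, where the signs of the $g^{\pm 1}$ strictly alternate because a $g^{\pm1}$ enters the word exactly as a conjugator around each second-copy letter. Viewing this as a word in the amalgam $GP(\Gam,\mG) = (G_t \times H) \ast_H K$, each $g^{\pm 1}$ lies in $(G_t \times H)\ssm H$ (since $G_t \cap H = \{1\}$, as $t \notin \lk(t)$ in a simplicial graph), and each $K$-letter lies in $K \ssm H$ (since $k_i \notin H$ and absorbing $h$ into $k_1$ cannot push $hk_1$ into the subgroup $H$). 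The normal form theorem for amalgamated products then forces $\phi(w) \ne 1$ whenever $n \ge 1$. When $n=0$ the claim reduces to injectivity of the inclusion $H \hookrightarrow GP(\Gam,\mG)$, which is standard for induced subgraphs.

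I expect the main obstacle to be the careful bookkeeping in the previous paragraph: one must verify that the alternating pattern of $\phi(w)$ survives the absorption of $h$ into an adjacent letter, and handle symmetrically the case in which $w$ begins in the second copy rather than the first. These dissolve upon writing out the word; crucially, the argument is uniform in $|G_t|$, and even a vertex group of order two at $t$ causes no collapse, since the image word alternates between $g$ and $g^{-1}$ rather than accumulating into $g^{\pm 2}$.
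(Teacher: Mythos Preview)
Your proof is correct. The approach differs from the paper's in packaging, though the geometric content is the same. The paper first replaces $G_t$ by a nontrivial cyclic subgroup $\Z/k\Z$ (harmless, since shrinking a vertex group yields an embedded sub--graph-product), rewrites $GP(\Gam,\mG)$ as the quotient $G\ast_H/\fform{t^k}$ of an HNN extension, and then invokes the earlier covering-space Lemma to conclude that $G\ast_H G$ embeds as (a subgroup of) an index-$k$ subgroup. Unwinding that covering map, the second copy of $G$ lands on $tGt^{-1}$, exactly your conjugation trick. Your argument bypasses both the reduction to cyclic $G_t$ and the graph-of-groups machinery by writing $GP(\Gam,\mG)=(G_t\times H)\ast_H K$ directly and checking injectivity with the normal form theorem for amalgams. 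This is more self-contained and handles arbitrary $G_t$ in one stroke; the paper's route, on the other hand, gets the embedding as a finite-index inclusion (for finite $G_t$) essentially for free from the covering picture, which is occasionally useful elsewhere.

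One small point worth tightening: in the case where $w$ begins in the second copy, the leading syllable of $\phi(w)$ is $hg$ rather than $hk_1$; you should note explicitly that $hg=(g,h)\in (G_t\times H)\ssm H$ because $g\ne1$, so the alternation in $(G_t\times H)\ast_H K$ still starts correctly. You allude to this in your final paragraph but do not quite say it.
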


\begin{proof}
Since $G_t$ contains a nontrivial cyclic subgroup, we may assume $G_t=\Z/k\Z$ for some $2\le k<\infty$ or $k=0$. Put $G = GP(\Gam\ssm\st(t),\mG\ssm \{G_t\})$ and $H=\form{\{G_v\co v\in \lk(t)\}}\le G$. Then we have $GP(\Gam,\mG) = G\ast_H /\fform{t^k}$ and  Lemma~\ref{lem:double} applies.
\end{proof}

For $e\in E(\Gam)$, we let $\Gam/e$ denote the contraction of $e$~\cite[p.20]{diestel2010}.

\begin{cor}\label{cor:cocont}
Suppose $e=\{x,t\}$ is an edge of $\Gam\opp$.
We put $\Gam_0\opp = \Gam\opp / e$ and denote by $y\in V(\Gam_0)=V(\Gam_0\opp)$ the contracted vertex of $e$.
If $\mG = \{G_v\co v\in V(\Gam)\}$ is a collection of nontrivial groups and $G_y = G_x$, then $GP(\Gam_0,(\mG\ssm\{G_x,G_t\})\cup \{G_y\})$ embeds into $GP(\Gam,\mG)$.
\end{cor}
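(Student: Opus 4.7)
The plan is to apply Lemma~\ref{lem:double} at the vertex $t\in V(\Gam)$ and then locate $\Gam_0$ as an induced, label-preserving subgraph of the resulting double $\Gam''$. Since $e=\{x,t\}\in E(\Gam\opp)$, the vertices $x$ and $t$ are non-adjacent in $\Gam$. Write $A=V(\Gam)\ssm(\{t\}\cup\lk_\Gam(t))$ and $B=\lk_\Gam(t)$, so $V(\Gam)\ssm\{t\}=A\sqcup B$ and $x\in A$. By the definition of the double, $V(\Gam'')=A_1\sqcup B\sqcup A_2$, where $A_1,A_2$ are two disjoint copies of $A$ identified along $B$; the projection $\rho$ sends $u_i\mapsto u$ for $u\in A$, $i\in\{1,2\}$, and fixes $B$; and the edges of $\Gam''$ are those of $\Gam$ within each $A_i\cup B$, with no edges between $A_1$ and $A_2$. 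Applying Lemma~\ref{lem:double} yields an embedding
\[
GP(\Gam'',\mG'')\hookrightarrow GP(\Gam,\mG), \qquad \mG''=\{G_{\rho(u)} : u\in V(\Gam'')\}.
\]

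Next, I would exhibit a copy of $\Gam_0$ carrying the labels $\mG_0$ as an induced subgraph of $\Gam''$, by taking
\[
U=(A\ssm\{x\})_1 \cup B \cup \{x_2\}\subseteq V(\Gam'')
\]
and defining $\Gam_0\to\Gam''[U]$ by $u\mapsto u_1$ for $u\in A\ssm\{x\}$, $v\mapsto v$ for $v\in B$, and $y\mapsto x_2$. The essential adjacency check is at the new vertex: contracting $e\in E(\Gam\opp)$ makes $y$ non-adjacent in $\Gam_0$ to exactly those $u$ that are non-adjacent in $\Gam$ to $x$ or to $t$, so $\lk_{\Gam_0}(y)=\lk_\Gam(x)\cap\lk_\Gam(t)=\lk_\Gam(x)\cap B$. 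On the other hand $x_2\in A_2$ has no edges to $A_1$ in $\Gam''$, hence its neighbours in $\Gam''[U]$ are precisely $\lk_\Gam(x)\cap B$ as well. Every other adjacency is inherited directly from $\Gam$, and the vertex groups line up: $G_{\rho(u_1)}=G_u$, $G_{\rho(v)}=G_v$, and $G_{\rho(x_2)}=G_x=G_y$ by hypothesis.

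Finally, an inclusion of a label-preserving induced subgraph always induces an embedding between graph products (a standard consequence of the normal form theorem for graph products invoked in the proof of Theorem~\ref{thm:kp}), so $GP(\Gam_0,\mG_0)\hookrightarrow GP(\Gam'',\mG'')$. Composing with the embedding from Lemma~\ref{lem:double} gives the corollary. The only work of substance is the adjacency bookkeeping between $\Gam$, $\Gam\opp$, its edge-contraction, and the double; given Lemma~\ref{lem:double}, nothing further is needed.
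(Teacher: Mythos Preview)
Your proof is correct and follows essentially the same route as the paper's: apply Lemma~\ref{lem:double} at $t$, then identify $\Gam_0$ with the induced subgraph of the double on $(A\ssm\{x\})_1\cup B\cup\{x_2\}$, which is precisely the paper's $P\cup Q\cup R\cup S\cup\{x'\}$ in its finer partition $P=\lk(x)\cap\lk(t)$, $Q=\lk(t)\ssm\lk(x)$, $R=\lk(x)\ssm\lk(t)$, $S$ the rest. Where the paper relies on a figure for the adjacency bookkeeping, you write out the key check $\lk_{\Gam_0}(y)=\lk_\Gam(x)\cap\lk_\Gam(t)=\lk_\Gam(x)\cap B$ explicitly; nothing more is needed.
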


\begin{proof}
As in Figure~\ref{fig:cocont} (a), we partition $V(\Gam)\ssm\{x,t\}$ into $P,Q,R,S$ where $P = \lk(x)\cap\lk(t), Q=\lk(t)\ssm\lk(x),R=\lk(x)\ssm\lk(t)$ and $S=V(\Gam)\ssm(P\cup Q\cup R\cup\{x,t\})$. Let $\Gam_1$ be the double of $\Gam\ssm\st(t)$ along the link of $t$ as shown in Figure~\ref{fig:cocont} (c); we write $R',S'$ and $x'$ for the copies of $R,S$ and $x$. Figure~\ref{fig:cocont} (b) shows that $\Gam_0$ is isomorphic to the subgraph of $\Gam_1$ induced by $P\cup Q\cup R\cup S\cup \{x'\}$.\end{proof}

\begin{figure}[htb!]
\tikzstyle {a}=[red,postaction=decorate,decoration={%
markings,%
mark=at position .5 with {\arrow[red]{stealth};}}]
\tikzstyle {b}=[blue,postaction=decorate,decoration={%
markings,%
mark=at position .43 with {\arrow[blue]{stealth};},%
mark=at position .57 with {\arrow[blue]{stealth};}}]
\tikzstyle {v}=[draw,shape=circle,fill=black,inner sep=0pt]
\tikzstyle {bv}=[blue,draw,shape=circle,fill=blue,inner sep=1pt]
\tikzstyle {gv}=[green,draw,shape=circle,fill=green,inner sep=1pt]
\tikzstyle {rv}=[red,draw,shape=circle,fill=red,inner sep=1pt]
\tikzstyle {blv}=[black,draw,shape=circle,fill=black,inner sep=1pt]
\tikzstyle{every edge}=[-,draw]
\subfloat[(a) $\Gam$]{
\begin{tikzpicture}[thick]
	\draw (-1.5,-1) -- (1.5,-1) -- (1.5,1) -- (-1.5,1) -- cycle;
	\draw (-1.5,0) -- (1.5,0);
	\draw (0,-1) -- (0,1);
	\draw (0.75,-0.5) node {$\scriptstyle P$};
	\draw (0.75,0.5) node {$\scriptstyle Q$};
	\draw (-0.75,0.5) node {$\scriptstyle S$};
	\draw (-0.75,-0.5) node {$\scriptstyle R$};
	\draw (1.2,0.35) node [blv] {} -- (2,0) node [] {};
	\draw (1.3,-0.35) node [blv] {} -- (2,0) node [] {};
	\draw (1.3,-0.7) node [blv] {} -- (2,0) node [] {};
	\draw (1.3,-0.7) node [] {} -- (0,-1.5) node [] {};
	\draw (-.3,-.7) node [blv] {} -- (0,-1.5) node [] {};
	\draw (1.3,-0.35) node [] {} -- (0,-1.5) node [bv] {} node [left] {$\scriptstyle x$};
	\draw (-2,0) node [] {};
	\draw (1.2,0.7) node [blv] {} -- (2,0) node [right] {$\scriptstyle t$};	
	\draw (2,0) node [rv] {};
\end{tikzpicture}
}%
\qquad\qquad
\subfloat[(b) $\Gam_0$]{
\begin{tikzpicture}[thick]
	\draw (-1.5,-1) -- (1.5,-1) -- (1.5,1) -- (-1.5,1) -- cycle;
	\draw (-1.5,0) -- (1.5,0);
	\draw (0,-1) -- (0,1);
	\draw (0.75,-0.5) node {$\scriptstyle P$};
	\draw (0.75,0.5) node {$\scriptstyle Q$};
	\draw (-0.75,0.5) node {$\scriptstyle S$};
	\draw (-0.75,-0.5) node {$\scriptstyle R$};
	\draw (1.3,-0.35) node [blv] {} -- (2,0) node [right] {$\scriptstyle y$};
	\draw (1.3,-0.7) node [blv] {} -- (2,0) node [] {};
	\draw (-.3,-.7) node [] {}  (0,-1.5) node [] {};	
	\draw (-2,0) node [] {};
	\draw	(2,0) node [bv] {};
\end{tikzpicture}
}%
\\
\subfloat[(c) $\Gam_1$]{
\begin{tikzpicture}[thick]
	\draw (-1.5,-1) -- (3,-1) -- (3,1) -- (-1.5,1) -- cycle;
	\draw (-1.5,0) -- (3,0);
	\draw (0,-1) -- (0,1);
	\draw (1.5,-1) -- (1.5,1);	
	\draw (0.75,-0.5) node {$\scriptstyle P$};
	\draw (0.75,0.5) node {$\scriptstyle Q$};
	\draw (-0.75,0.5) node {$\scriptstyle S$};
	\draw (-0.75,-0.5) node {$\scriptstyle R$};
	\draw (2.25,0.5) node {$\scriptstyle S'$};
	\draw (2.25,-0.5) node {$\scriptstyle R'$};
	\draw (1.3,-0.35) node [blv] {};
	\draw (1.3,-0.7) node [blv] {};
	\draw (1.3,-0.35) node [] {} -- (0,-1.5) node [left] {$\scriptstyle x$};
	\draw (1.3,-0.7) node [] {} -- (0,-1.5) node [] {};
	\draw (-.3,-.7) node [blv] {} -- (0,-1.5) node [] {};
	\draw (1.3,-0.35) node [] {} -- (2.5,-1.5) node [right] {$\scriptstyle x'$};
	\draw (1.3,-0.7) node [] {} -- (2.5,-1.5) node [] {};
	\draw (2.7,-.7) node [blv] {} -- (2.5,-1.5) node [] {};
	\draw (0,-1.5) node [bv] {};
	\draw (2.5,-1.5) node [bv] {};
\end{tikzpicture}
}%
\caption{Co-contraction is contained in the double along a link.}
\label{fig:cocont}
\end{figure}

Theorem~\ref{thm:cocont} now follows from Corollary \ref{cor:cocont}. Since there is a finite sequence of edge-contractions from $C_m$ onto $C_5$ for each $m\ge5$, we have an embedding from $C(C_5)\cong C(C_5\opp)$ into $C(C_m\opp)$. Hence, Corollary~\ref{cor:anticycle} also follows.

\begin{rem}\label{rem:cocont}
A special case of Corollary~\ref{cor:cocont} is when all the vertex groups are infinite cyclic, and was proved in~\cite{kim2008}.
Bell gave a shorter proof of this case using the same decomposition of a graph as shown in Figure~\ref{fig:cocont}~\cite{bell2012}, independently from this writing.
\end{rem}

\section{Right-angled Coxeter groups on seven vertex graphs}\label{sec:7v}

For a group $G$ and $H\le G$,
we say that $g\in G$ is \emph{conjugate into $H$} if $g$ is conjugate to an element of $H$.

\begin{defn}\label{defn:small}
Let a group $G$ and its subgroup $H$ be given.
We say $(G,H)$ is \emph{big} if there exists a compact hyperbolic surface $S$ and a monomorphism $\phi\co\pi_1(S)\to G$ such that $\phi([\gamma])$ is conjugate into $H$ whenever $\gamma$ is homotopic into $\partial S$.
The pair $(G,H)$ will be called \emph{small} if it is not big.
\end{defn}

\begin{rem}\label{rem:basic small}
\be
\item
If $(G,H)$ is big and $H\le K\le G$, then $(G,K)$ is also big.
\item
If $G\in\mS$, then $(G,H)$ is big for each $H\le G$.
\item
If a group $G$ does not contain $F_2$, then $(G,H)$ is small for each $H\le G$.
\ee
\end{rem}

The following lemma is obvious from typical transversality arguments.

\begin{lem}[\cite{kim2010}]\label{lem:transverse}
\be
\item
Let $A$ and $B$ be groups and $\psi\co C\to D$ be an isomorphism where $C\le A$ and $D\le B$. 
If $A\ast_\psi B\in \mS$, then either $(A,C)$ or $(B,D)$ is big.
\item
Let $A$ be a group and $\psi\co C\to C'$ be an isomorphism where $C,C'\le A$.
If $A\ast_\psi\in \mS$, then $(A,\form{C,C'})$ is big.
\ee
\end{lem}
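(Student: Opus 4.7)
The plan is to realize each splitting as a graph of spaces and extract the desired surface via transversality. For (1), suppose $\iota\co\pi_1(\Sigma)\hookrightarrow A\ast_\psi B$ where $\Sigma$ is a closed hyperbolic surface. Fix a graph of spaces $X$ whose two vertex spaces model $K(A,1)$ and $K(B,1)$, joined by an edge space modeling $K(C,1)\cong K(D,1)$ via the inclusion of $C$ and $\psi$. Choose a continuous map $f\co\Sigma\to X$ inducing $\iota$ on $\pi_1$.

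First I would homotope $f$ to be transverse to the midpoint of the edge space, so that $\mV:=f^{-1}(\text{midpoint})$ is a disjoint union of simple closed curves on $\Sigma$. Then I would simplify $\mV$ within the homotopy class of $f$ by discarding null-homotopic components and eliminating bigons, arriving at a (possibly empty) family of essential, pairwise disjoint simple closed curves. If this family is empty, then $\iota$ factors through $A$ or $B$ and $\Sigma$ itself witnesses bigness of $(A,C)$ or $(B,D)$ vacuously, since $\partial\Sigma=\emptyset$. Otherwise, cutting $\Sigma$ along $\mV$ produces compact subsurfaces $S_1,\dots,S_m$, each mapped by $f$ into the $A$-vertex space or the $B$-vertex space.

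Essentiality of $\partial S_i$ gives $\pi_1(S_i)\hookrightarrow\pi_1(\Sigma)$, so composing with $\iota$ yields an injection whose image lies in a conjugate of $A$ or $B$ inside $A\ast_\psi B$; since $A,B\hookrightarrow A\ast_\psi B$, we obtain a monomorphism from $\pi_1(S_i)$ to $A$ or to $B$. The boundary curves of $S_i$ stabilize edges of the Bass--Serre tree, hence their images lie in conjugates of the edge group: conjugates of $C$ inside $A$, or of $D$ inside $B$. Since $\sum_i\chi(S_i)=\chi(\Sigma)<0$, some $S_i$ has $\chi(S_i)<0$, and this compact hyperbolic subsurface witnesses that either $(A,C)$ or $(B,D)$ is big, proving (1). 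For (2), the identical argument for the HNN graph of spaces of $A\ast_\psi$ yields each $\pi_1(S_i)\hookrightarrow A$, with boundary curves conjugate in $A$ into either $C$ or $C'=\psi(C)$; since both lie in $\langle C,C'\rangle$, the pair $(A,\langle C,C'\rangle)$ is big.

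The technically delicate point is the simplification of $\mV$ together with the verification that each resulting $\pi_1(S_i)$ still injects into the relevant vertex group. I would handle this by lexicographically minimizing a complexity such as $(|\mV|,\text{bigon count})$ within the homotopy class of $f$, in the spirit of the proof of Theorem~\ref{thm:lr}~(3) and~\cite{CW2004,kim2010}; after such a minimization, all remaining curves are essential and $\iota$--injectivity, combined with the fact that vertex and edge stabilizers in the Bass--Serre tree are the conjugates of $A$, $B$, and $C\cong D$, forces each $\pi_1(S_i)\to A$ or $B$ to be a monomorphism.
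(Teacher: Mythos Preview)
Your proposal is correct and is precisely the ``typical transversality argument'' the paper defers to in lieu of a proof (citing \cite{kim2010}); you have simply spelled out what the paper leaves implicit. One minor remark: since $\mV$ is the preimage of a point, its components are already pairwise disjoint, so there are no bigons to eliminate---the only simplification needed is removing null-homotopic components, which is possible because $C\hookrightarrow A\ast_\psi B$ forces any such component to bound a disk mapping (after homotopy) into the edge space.
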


\begin{lem}\label{lem:links}
Suppose $a,b\in V(\Gam)$ and $\lk(b)\subseteq \lk(a)$.
If $C(\Gam)\not\in\mS$, then $(C(\Gam),\form{a,b})$ is small.
\end{lem}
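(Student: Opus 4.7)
My plan is to prove the contrapositive: if $(C(\Gam),\form{a,b})$ is big, then $C(\Gam)\in\mS$. Let $\phi\co \pi_1(S)\hookrightarrow C(\Gam)$ be a witness, with $S$ compact hyperbolic and every boundary component of $S$ mapped into a conjugate of $\form{a,b}$. First I would note that $\lk(b)\subseteq\lk(a)$ forces $\{a,b\}\notin E(\Gam)$ (else $a\in\lk(b)\subseteq\lk(a)$ would produce a loop at $a$), so $\form{a,b}\cong D_\infty$. The same hypothesis yields a well-defined retraction $\pi\co C(\Gam)\twoheadrightarrow C(\Gam\ssm\{b\})$ sending $b\mapsto a$ and fixing every other generator; the only nontrivial check is that each relation $[b,v]=1$ with $v\in\lk(b)$ maps to $[a,v]=1$, which holds because $v\in\lk(a)$. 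Since $\pi_1(S)$ is torsion-free and $\phi$ is injective, each $\phi([\partial_i S])$ is an infinite-order element of a conjugate of $D_\infty$, hence is conjugate to $(ab)^{n_i}$ for some $n_i\ne 0$.

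Next, I would realize $\phi$ via Theorem~\ref{thm:lr}(2,3) as a label-reading map for a minimal label-reading pair $(\mV,\lambda)$ on $S$, so that each boundary $\partial_i S$ reads the cyclically reduced word $(ab)^{n_i}$ and transversally crosses $\mV$ in $2n_i$ points alternately labeled $a$ and $b$. Using $\lk(b)\subseteq\lk(a)$, I would relabel every $b$-arc in $\mV$ as an $a$-arc: the resulting pair $(\mV,\lambda')$ is a valid label-reading pair whose underlying graph is $\Gam\ssm\{b\}$, and its associated reading map is precisely $\pi\circ\phi$, so that each boundary now reads $a^{2n_i}=1$ in $C(\Gam\ssm\{b\})$. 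I would then cap off each $\partial_i S$ by a disk $D_i$ in which $n_i$ mutually disjoint, properly embedded $a$-arcs pair consecutive endpoints along $\partial_i S$ in a nested fashion. This yields a closed surface $\widehat S\supseteq S$ carrying a label-reading pair $(\mV^*,\lambda^*)$ with underlying graph $\Gam\ssm\{b\}$, and hence a label-reading map $\widehat\phi\co \pi_1(\widehat S)\to C(\Gam\ssm\{b\})\hookrightarrow C(\Gam)$; note that $\chi(\widehat S)=\chi(S)<0$, so $\widehat S$ admits a hyperbolic structure.

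The crux, and the main obstacle, is to verify that $\widehat\phi$ is injective; granted this, $\pi_1(\widehat S)\le C(\Gam)$ is a hyperbolic surface subgroup, contradicting $C(\Gam)\not\in\mS$. I would approach injectivity by first further reducing $(\mV^*,\lambda^*)$ via standard moves (removing bigons, null-homotopic components, and any curves that became homotopic into the interior of some $D_i$), and then invoking the disc-diagram criterion for label-reading maps into the Salvetti complex of $C(\Gam\ssm\{b\})$. A nontrivial element of $\ker\widehat\phi$ would produce a reducing disc diagram for $(\mV^*,\lambda^*)$; the key to ruling this out is to show that any such reduction either lies entirely inside one of the capping disks $D_i$ (impossible because the nested arcs form a minimal realization of the trivial word $a^{2n_i}$) or propagates into $S$, in which case it pulls back to a reduction of $(\mV,\lambda)$, contradicting the minimality guaranteed by Theorem~\ref{thm:lr}(3). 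The delicate point will be to ensure that the capping arcs cannot interact with the original pair to create new, essentially unrelated simplifications; this is precisely where the link hypothesis $\lk(b)\subseteq\lk(a)$ is essential, since it is what made the relabeling a valid label-reading in $C(\Gam\ssm\{b\})$ to begin with.
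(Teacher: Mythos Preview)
Your approach—retracting $b\mapsto a$, relabeling, and capping off—is quite different from the paper's, and it has two genuine problems.

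First, an outright error: $\chi(\widehat S)\ne\chi(S)$. Capping each boundary circle with a disk adds $1$ to the Euler characteristic, so $\chi(\widehat S)=\chi(S)+|\pi_0(\partial S)|$. A pair of pants caps to $S^2$ and a one-holed torus caps to $T^2$; in neither case is $\widehat S$ hyperbolic. So you cannot conclude that $\widehat S$ admits a hyperbolic structure.

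Second, and more fundamentally, the injectivity of $\widehat\phi$ is not established. There is no general ``disc-diagram criterion'' that upgrades a minimal label-reading pair to a $\pi_1$-injective map, and the minimality you arranged on $S$ via Theorem~\ref{thm:lr}(3) is destroyed by your construction: once you relabel, the former $a$-arcs and $b$-arcs all carry the label $a$ and typically form bigons (they were only required to intersect curves in $\lk(a)$, resp.\ $\lk(b)$, minimally), and the nested $a$-arcs in each $D_i$ interact with them across $\partial S$. Your dichotomy ``either the reduction lies in some $D_i$ or it pulls back to $S$'' is exactly the unjustified step.

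By contrast, the paper's proof is a short commutator argument that sidesteps both issues. After arranging $(\mV,\lambda)$ as you did, one picks any $b$-arc $\beta\in\mV$; it joins two (possibly equal) boundary components $\partial_iS,\partial_jS$ and, by the label-reading rules, crosses only curves with labels in $\lk(b)\subseteq\lk(a)\cap\lk(b)$. Hence the label-reading $w$ of $\beta$ centralizes $\form{a,b}$, so $\phi\big([\beta\cdot\partial_jS\cdot\beta^{-1}]\big)=w(ab)^{l}w^{-1}=(ab)^{l}$ commutes with $\phi([\partial_iS])=(ab)^{m}$. But $[\partial_iS]$ and $[\beta\cdot\partial_jS\cdot\beta^{-1}]$ do not commute in $\pi_1(S)$, since $\beta$ is not homotopic into $\partial S$ by condition~(ii). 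This contradicts the injectivity of $\phi$ directly—no capping, no closed surface, no second injectivity to verify.
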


\begin{proof}
Suppose $S$ is a compact hyperbolic surface and
$\phi\co\pi_1(S)\to C(\Gam)$ is a monomorphism 
such that $\phi([\gamma])$ is conjugate into $\form{a,b}$ 
whenever $\gamma$ is homotopic into $\partial S$.
Since $C(\Gam)\not\in\mS$, we have $\partial S\ne\varnothing$.
Let $\partial_1 S,\partial_2 S,\ldots$ be the boundary components of $S$. 
Since $\form{a,b}$ contains $\Z$, we see that $a$ and $b$ are distinct and non-adjacent in $\Gam$.
We realize $\phi$ as a label-reading map with respect to a label-reading pair $(\mV,\lambda)$ satisfying the three conditions in Theorem~\ref{thm:lr} (3).
Then each $\partial_i S$ intersects with both $a$--arcs and $b$--arcs, and no arcs with labels other than $a$ or $b$ intersect $\partial S$.
Choose a $b$--arc $\beta$ joining say, $\partial_i S$ and $\partial_j S$. 
These two boundary components may coincide; 
however, $\beta$ is never homotopic into $\partial S$. With suitable choices of the base point and the orientations, $\phi([\beta\cdot\partial_j S\cdot \beta^{-1}]) = w(ab)^l w^{-1}$ for some $l\ne0$ where $w$ is the label-reading of $\beta$. 
Since $w\in\form{\lk(b)} = \form{\lk(a)\cap\lk(b)}$, we have $\phi([\beta\cdot\partial_j S\cdot \beta^{-1}]) = (ab)^l$. Also, $\phi([\partial_i S]) = (ab)^{m}$ for some $m\ne0$. Since $[\beta\cdot\partial_j S \cdot\beta^{-1}]$ and $[\partial_i S]$ do not commute, we have a contradiction.
\end{proof}

\begin{lem}\label{lem:product}
Let $H,G_1,G_2$ be groups such that $H$ is torsion-free word-hyperbolic.
Denote by $p_i\co G_1\times G_2\to G_i$ the natural projection for $i=1,2$.
If $\phi\co H\to G_1\times G_2$ is injective, then $p_1\circ\phi$ or $p_2\circ\phi$ is also injective.
\end{lem}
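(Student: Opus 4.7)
The plan is to argue by contradiction using the structure of centralizers in torsion-free word-hyperbolic groups. Suppose neither $p_1\circ\phi$ nor $p_2\circ\phi$ is injective, and set $N_i=\ker(p_i\circ\phi)\trianglelefteq H$ for $i=1,2$. By assumption both $N_1$ and $N_2$ are nontrivial. Since $\phi$ itself is injective and $\ker\phi=N_1\cap N_2$, we have $N_1\cap N_2=\{1\}$.

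The first key observation is that two normal subgroups with trivial intersection centralize each other: for any $a\in N_1$ and $b\in N_2$, the commutator $[a,b]$ lies in $N_1\cap N_2=\{1\}$, so $a$ and $b$ commute. Pick nontrivial elements $a\in N_1$ and $b\in N_2$; these commute in $H$.

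The second key ingredient is the well-known fact that in a torsion-free word-hyperbolic group, the centralizer of any nontrivial element is infinite cyclic (this follows, e.g., from the fact that every abelian subgroup of a word-hyperbolic group is virtually cyclic, combined with torsion-freeness). Applying this to $a$, there exists $c\in H$ with $\langle a,b\rangle\subseteq\langle c\rangle$, so we can write $a=c^m$ and $b=c^n$ with $m,n\ne0$.

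Then $b^m=c^{mn}=a^n$, and this element lies in $N_1$ (as a power of $a\in N_1$) and also in $N_2$ (as a power of $b\in N_2$), so $b^m\in N_1\cap N_2=\{1\}$. Since $H$ is torsion-free and $b\ne1$, this forces $m=0$, contradicting $m\ne 0$. Hence one of $p_1\circ\phi$ or $p_2\circ\phi$ must have been injective. The main thing to be careful about is invoking the correct torsion-free hyperbolic fact about centralizers being cyclic; everything else is a short formal manipulation.
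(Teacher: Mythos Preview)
Your proof is correct and follows essentially the same strategy as the paper: pick nontrivial elements in the two kernels, observe they commute, use the torsion-free word-hyperbolic hypothesis to force a common nontrivial power, and conclude that power lies in the trivial intersection. The only cosmetic difference is that the paper deduces commutation by noting $\phi(x_1)\in G_1$ and $\phi(x_2)\in G_2$ commute in the direct product and then pulling back through the injective $\phi$, whereas you use the normal-subgroups-with-trivial-intersection argument; both are equivalent here.
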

\begin{proof}
Suppose $1\ne x_1\in \ker (p_2\circ\phi)$ and $1\ne x_2\in \ker (p_1\circ\phi)$ so that $\phi(x_i)\in G_i$ for $i=1,2$.
Since $\phi([x_1,x_2])=[\phi(x_1),\phi(x_2)]=1$, we have $x_1^M=x_2^N$ for some $M,N\ne0$~\cite[Corollary 3.10]{bh1999}.
So, $\phi(x_1^M)=\phi(x_2^N)\in G_1\cap G_2=1$.
\end{proof}

A repeated application of Lemma~\ref{lem:product} easily implies the following.

\begin{lem}\label{lem:small product}
If $k>0$ and $(G_i,H_i)$ is small for $i=1,2,\ldots,k$,
then $(\prod_{i=1}^k G_i, \prod_{i=1}^k H_i)$ is small.
\end{lem}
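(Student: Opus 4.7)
The plan is to induct on $k$, with the base case $k=1$ being tautological. For the inductive step, assume each $(G_i, H_i)$ is small but that $(\prod_{i=1}^k G_i, \prod_{i=1}^k H_i)$ is big. I would then split off one factor and write $G_1\times\cdots\times G_k = G_1\times G'$ and $H_1\times\cdots\times H_k = H_1\times H'$, where $G' = \prod_{i=2}^k G_i$ and $H' = \prod_{i=2}^k H_i$. By bigness, there exist a compact hyperbolic surface $S$ and a monomorphism $\phi\co \pi_1(S)\to G_1\times G'$ sending every peripheral class into a conjugate of $H_1\times H'$.

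Next, I would observe that $\pi_1(S)$ is torsion-free and word-hyperbolic: it is either a closed surface group or a finitely generated free group, depending on whether $\partial S=\varnothing$. This is precisely the hypothesis needed to apply Lemma~\ref{lem:product} to $\phi$, so one of the two coordinate projections $p_1\circ\phi\co\pi_1(S)\to G_1$ or $p_{G'}\circ\phi\co\pi_1(S)\to G'$ is again injective. The routine but essential observation is that both projections are compatible with the conjugation-into-subgroup condition: if $\phi([\gamma]) = g(h_1,h')g^{-1}$ for some $(h_1,h')\in H_1\times H'$, then $p_1\circ\phi([\gamma])$ is conjugate into $H_1$ and $p_{G'}\circ\phi([\gamma])$ is conjugate into $H'$.

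Therefore the injective projection witnesses bigness of either $(G_1, H_1)$ or $(G', H') = (\prod_{i=2}^k G_i, \prod_{i=2}^k H_i)$. The first case contradicts the standing smallness of $(G_1,H_1)$, and the second contradicts the inductive hypothesis applied to the $k-1$ remaining pairs. There is no serious obstacle beyond packaging these observations; the only point that requires care is verifying that the torsion-freeness and word-hyperbolicity of $\pi_1(S)$ indeed hold in the bordered case, which ensures Lemma~\ref{lem:product} is applicable at each step of the induction.
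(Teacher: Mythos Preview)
Your argument is correct and is precisely the ``repeated application of Lemma~\ref{lem:product}'' that the paper invokes without further detail: you induct on $k$, split off one factor, use that $\pi_1(S)$ is torsion-free word-hyperbolic to apply Lemma~\ref{lem:product}, and observe that each coordinate projection carries the peripheral-conjugacy condition along. There is nothing to add.
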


\begin{exmp}\label{exmp:p3}
Label the vertices of $P_3\coprod P_3$ as Figure~\ref{fig:7v} (a) and let $\Lambda_0 = (P_3\coprod P_3)\opp$. 
Let us consider subgroups of $C(\Lambda_0)=\form{a,b,c,e,f,g}$ from now on.
In the subgraph of $\Lambda_0$ induced by $\{a,b,c\}$, we have $\lk(b)=\varnothing$ and $\lk(a)=c$.
Lemma~\ref{lem:links} implies that $(\form{a,b,c},\form{a,b})$ is small.
By Lemma~\ref{lem:small product}, $(\form{a,b,c,e,f,g},\form{a,b,f,g})=(\form{a,b,c}\times\form{e,f,g},\form{a,b}\times\form{f,g})$ is also small.
\end{exmp}

\begin{figure}[htb!]
  \tikzstyle {b}=[postaction=decorate,decoration={%
    markings,%
    mark=at position .57 with {\arrow{stealth};},%
    }]
  \tikzstyle {v}=[draw,shape=circle,fill=black,inner sep=0pt]
  \tikzstyle {bv}=[black,draw,shape=circle,fill=black,inner sep=1pt]
  \tikzstyle{every edge}=[-,draw]
	\subfloat[(a) $\Lambda_0\opp=P_3\coprod P_3$]{
	\begin{tikzpicture}[thick]
    	  \draw  (-1.3,0) node  [bv]  (am) {} node[above] {$\scriptstyle a$} 
    	  -- (-.8,0) node [bv] (ap) {} node[above] {$\scriptstyle b$}
   	  -- (-.3,0) node [bv] (ap) {} node[above] {$\scriptstyle c$};
    	  \draw  (.3,0) node  [bv]  (am) {} node[above] {$\scriptstyle e$} 
    	  -- (.8,0) node [bv] (ap) {} node[above] {$\scriptstyle f$}
   	  -- (1.3,0) node [bv] (ap) {} node[above] {$\scriptstyle g$};	  
	  \draw (0,-.6) node [] {};
	\end{tikzpicture}
	}
	\qquad
	\qquad
	\subfloat[(b) $P_6$]{
	\begin{tikzpicture}[thick]
    	  \draw  (-1.5,0) node  [bv]  (am) {} node[above] {$\scriptstyle a$} 
    	  -- (-1,0) node [bv] (ap) {} node[above] {$\scriptstyle b$}
   	  -- (-.5,0) node [bv] (ap) {} node[above] {$\scriptstyle c$}
   	  -- (0,0) node [bv] (ap) {} node[above] {$\scriptstyle d$}	  
   	  -- (.5,0) node [bv] (ap) {} node[above] {$\scriptstyle e$}	  
   	  -- (1,0) node [bv] (ap) {} node[above] {$\scriptstyle f$};
	  \draw (0,-.6) node [] {};	  
	\end{tikzpicture}
	}
	\qquad
	\qquad
	\subfloat[(c) $P_7$]{
	\begin{tikzpicture}[thick]
    	  \draw  (-1.5,0) node  [bv]  (am) {} node[above] {$\scriptstyle a$} 
    	  -- (-1,0) node [bv] (ap) {} node[above] {$\scriptstyle b$}
   	  -- (-.5,0) node [bv] (ap) {} node[above] {$\scriptstyle c$}
   	  -- (0,0) node [bv] (ap) {} node[above] {$\scriptstyle d$}	  
   	  -- (.5,0) node [bv] (ap) {} node[above] {$\scriptstyle e$}	  
   	  -- (1,0) node [bv] (ap) {} node[above] {$\scriptstyle f$}
   	  -- (1.5,0) node [bv] (ap) {} node[above] {$\scriptstyle g$};
	  \draw (0,-.6) node [] {};	  
	\end{tikzpicture}
	}	
\caption{Some six and seven vertex graphs}
\label{fig:7v}
\end{figure}

\begin{lem}\label{lem:finite amalgam}
If the free product of two groups $A$ and $B$ amalgamated along a finite subgroup is in $\mS$, then either $A$ or $B$ is in $\mS$.
\end{lem}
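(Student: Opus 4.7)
My plan is to use Bass-Serre theory applied to the action of a hyperbolic surface subgroup on the Bass-Serre tree of $A \ast_C B$. The key observations are that (i) a closed hyperbolic surface group is torsion-free, so it intersects the finite group $C$ (and all of its conjugates) trivially; and (ii) a closed hyperbolic surface group is one-ended, so it cannot split non-trivially as a free product.

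More concretely, suppose $G = A\ast_C B\in\mS$, so there is an injection $\phi\co S\hookrightarrow G$ where $S=\pi_1(\Sigma)$ for some closed hyperbolic surface $\Sigma$. The group $G$ acts on its Bass-Serre tree $T$ (without inversions, after barycentric subdivision if necessary) with vertex stabilizers conjugate to $A$ or $B$ and edge stabilizers conjugate to $C$. Restricting the action via $\phi$, the group $S$ acts on $T$, and since each edge stabilizer of this action is a subgroup of a conjugate of the finite group $C$ inside the torsion-free group $\phi(S)$, every edge stabilizer of the $S$-action is trivial.

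A group acting on a tree with trivial edge stabilizers and no global fixed point decomposes as a non-trivial free product (of vertex stabilizers and a free group) by Bass-Serre theory, and hence has more than one end by Stallings' theorem. Since $S$ is the fundamental group of a closed hyperbolic surface (necessarily of genus at least $2$), it is one-ended, so $S$ must fix a vertex of $T$. Therefore $\phi(S)$ is contained in a vertex stabilizer of $G$, which is a conjugate of $A$ or a conjugate of $B$. This gives an embedding of $S$ into $A$ or into $B$, proving that $A\in\mS$ or $B\in\mS$.

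The main routine point is making sure the action is without inversions and that edge stabilizers in $S$ really are trivial; the only subtle ingredient is one-endedness of $S$, which is a standard fact about closed hyperbolic surface groups. I do not expect any real obstacle here.
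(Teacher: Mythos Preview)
Your proof is correct and follows essentially the same approach as the paper: both use the induced action of the hyperbolic surface group on the Bass--Serre tree (equivalently, the induced graph-of-groups decomposition), observe that the edge groups are trivial because the surface group is torsion-free and $C$ is finite, and then invoke one-endedness to conclude the surface group lies in a single vertex group. Your write-up is simply a more explicit version of the paper's two-sentence argument.
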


\begin{proof}
Suppose $H$ is a hyperbolic surface group and $C$ is a finite subgroup of both $A$ and $B$ such that $H\le A\ast_C B$.
There is a graph of groups decomposition for $H$ such that each edge group embeds into $C$. 
Since $H$ is torsion-free and one-ended, this decomposition should essentially have only one vertex group.
\end{proof}

We denote by $P_n$ the path on $n$ vertices.
Let us recall from~\cite{css2008} the graphs $P_1(7)$ and $P_2(7)$, whose \emph{opposite graphs} are drawn in Figure~\ref{fig:forbidden}. 
The result in~\cite{css2008} implies that when $\Gam$ at most seven vertices, $A(\Gam)\in\mS$ if and only if $\Gam$ contains $C_6\opp$, $P_6\opp$, $P_1(7)$, $P_2(7)$ or $C_m$ for some $m\ge5$ as an induced subgraph.

\begin{figure}[htb!]
  \tikzstyle {b}=[postaction=decorate,decoration={%
    markings,%
    mark=at position .57 with {\arrow{stealth};},%
    }]
  \tikzstyle {v}=[draw,shape=circle,fill=black,inner sep=0pt]
  \tikzstyle {bv}=[black,draw,shape=circle,fill=black,inner sep=1pt]
\tikzstyle{interrupt}=[
    postaction={
        decorate,
        decoration={markings,
                    mark= at position 0.5 
                          with
                          {
                            \fill[white] (-0.1,-0.1) rectangle (0.1,0.1);
                          }
                    }
                }
]  
  \tikzstyle{every edge}=[-,draw]
	\subfloat[(a) $P_1(7)\opp$]{
	\begin{tikzpicture}[thick]
    	  \draw  
	  (-.75,0) node [bv]  {} node[above] {$\scriptstyle b$}
   	  -- (-.25,.3) node [bv]  {} node[above] {$\scriptstyle c$}
	    (.25,.3) node  [bv]   {} node[above] {$\scriptstyle e$} 
    	  -- (.75,.3) node [bv]  {} node[above] {$\scriptstyle f$}
   	  -- (.75,-.3) node [bv]  {} node[below] {$\scriptstyle g$}
	  --  (.25,-.3) node  [bv]  {} node[below] {$\scriptstyle e'$} 
   	   (-.25,-.3) node [bv]  {} node[below] {$\scriptstyle c'$}
	  -- (-.75,0) node []  {}; 
    	  \draw[interrupt] 	(-.25,-.3) node [] {} -- (.25,.3) node []  {};		  
    	  \draw 	(-.25,.3) node [] {} --  (.25,-.3) node  []  {};	  	  
    	  \draw 	(-.25,-.3) node [] {} --  (-.25,.3) node  []  {};
    	  \draw 	(.25,-.3) node [] {} --  (.25,.3) node  []  {};	  
	\end{tikzpicture}	}
	\qquad
	\qquad
	\subfloat[(b) $P_2(7)\opp$]{
	\begin{tikzpicture}[thick]
    	  \draw  (-1.25,0) node  [bv]   {} node[above] {$\scriptstyle a$} 
    	  -- (-.75,0) node [bv]  {} node[above] {$\scriptstyle b$}
   	  -- (-.25,.3) node [bv]  {} node[above] {$\scriptstyle c$}
	   (.25,.3) node  [bv]   {} node[above] {$\scriptstyle e$} 
    	  -- (.75,0) node [bv]  {} node[above] {$\scriptstyle f$};
    	  \draw   (-.75,0) node []  {} 
   	  -- (-.25,-.3) node [bv]  {} node[below] {$\scriptstyle c'$}
	    (.25,-.3) node  [bv]  {} node[below] {$\scriptstyle e'$} 
    	  -- (.75,0) node []  {} node[above] {};
    	  \draw 	(-.25,-.3) node [] {} --  (-.25,.3) node  []  {};
    	  \draw 	(.25,-.3) node [] {} --  (.25,.3) node  []  {};	  
    	  \draw[interrupt] 	(-.25,-.3) node [] {} -- (.25,.3) node []  {};		  
    	  \draw 	(-.25,.3) node [] {} --  (.25,-.3) node  []  {};
	\end{tikzpicture}	}
\caption{Graphs from~\cite{css2008}.}
\label{fig:forbidden}
\end{figure}

We now consider some eight-vertex graphs.

\begin{lem}\label{lem:8v}
Let $\Phi_1,\Phi_2,\ldots,\Phi_5$ be the graphs whose opposite graphs are shown in Figure~\ref{fig:8v} (a) through (e).
Then $C(\Phi_i)\not\in\mS$ for each $i=1,2,\ldots,5$.
\end{lem}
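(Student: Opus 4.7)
The plan is to treat the five graphs $\Phi_1,\ldots,\Phi_5$ by a uniform strategy based on visual splittings of $C(\Phi_i)$, combining Lemma~\ref{lem:transverse} and Lemma~\ref{lem:finite amalgam} with the smallness tools of Lemmas~\ref{lem:links} and~\ref{lem:small product}.

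For each $i$, the first step is to exhibit a subset $S_i\subseteq V(\Phi_i)$ whose complement splits into two pieces $A_i,B_i$ with $A_i\cap B_i=S_i$ and with no edges of $\Phi_i$ joining $A_i\smallsetminus S_i$ to $B_i\smallsetminus S_i$. Any such separation yields a visual amalgamated decomposition
\[
C(\Phi_i)=C(\Phi_i[A_i])\ast_{C(\Phi_i[S_i])}C(\Phi_i[B_i]).
\]
When $S_i$ happens to induce a clique, $C(\Phi_i[S_i])$ is a finite $2$--group and Lemma~\ref{lem:finite amalgam} reduces the question to showing $C(\Phi_i[A_i])\not\in\mS$ and $C(\Phi_i[B_i])\not\in\mS$ individually. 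Each of these factor subgraphs has strictly fewer than eight vertices, and should already be known not to produce hyperbolic surface subgroups: either it is one of the small weakly chordal patterns dispatched earlier in the same section, or it is small enough (for instance virtually abelian) that direct inspection or Remark~\ref{rem:basic small}(3) suffices.

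When no clique separator is available, the second step is to choose $S_i$ to consist of one or two non-adjacent vertices. Supposing $C(\Phi_i)\in\mS$ for contradiction, Lemma~\ref{lem:transverse}(1) forces at least one of the pairs $(C(\Phi_i[A_i]),\form{S_i})$, $(C(\Phi_i[B_i]),\form{S_i})$ to be big. I plan to rule out both possibilities by following the template of Example~\ref{exmp:p3}: observe that the factor subgraph in question is a join of two (or more) smaller pieces, so that the corresponding Coxeter group is a direct product; inside each join factor, locate a pair of vertices $a,b$ with $\lk(b)\subseteq\lk(a)$ and apply Lemma~\ref{lem:links} to get smallness on that factor; then package the factors together via Lemma~\ref{lem:small product} to conclude smallness of the whole pair. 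The contradiction with Lemma~\ref{lem:transverse}(1) then gives $C(\Phi_i)\not\in\mS$.

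The main obstacle is the case-by-case check that the link-containment hypothesis $\lk(b)\subseteq\lk(a)$ of Lemma~\ref{lem:links} is genuinely available inside each join factor of each side of each splitting. Since the graphs $\Phi_i$ are small perturbations of the $(P_3\sqcup P_3)\opp$ pattern of Example~\ref{exmp:p3}, the verification should be essentially combinatorial; but one has to be careful that the extra vertex beyond the $(P_3\sqcup P_3)\opp$ core does not introduce adjacencies that destroy the required link inclusion. If, for some $\Phi_i$, no vertex separator of the above type exists, the backup plan is to produce instead a visual HNN decomposition coming from a graph-symmetric involution of $\Phi_i$ and then apply Lemma~\ref{lem:transverse}(2) in place of Lemma~\ref{lem:transverse}(1). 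The precise adjacency data depicted in Figure~\ref{fig:8v} is what should make one or the other of these approaches go through for each $\Phi_i$.
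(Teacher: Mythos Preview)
Your broad strategy---visual splitting plus Lemma~\ref{lem:transverse}(1) with smallness fed in by Lemmas~\ref{lem:links} and~\ref{lem:small product}---is exactly what the paper does for $\Phi_1,\Phi_4,\Phi_5$, so you are on the right track there. Two points deserve correction, however.

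First, your description of the separator is wrong. You propose that, failing a clique separator, $S_i$ should be ``one or two non-adjacent vertices''. None of the $\Phi_i$ admit a useful cut-set of that size. The separator that actually works is the four-vertex set $\{a,b,f,g\}$, which induces a $4$--cycle in each $\Phi_i$ and gives the edge group $\form{a,b}\times\form{f,g}\cong D_\infty\times D_\infty$. Over this separator one side is always (a direct factor of) the $(P_3\sqcup P_3)\opp$ group $G=\form{a,b,c}\times\form{e,f,g}$, so Example~\ref{exmp:p3} applies; the other side is either virtually abelian (for $\Phi_1,\Phi_2$) or another copy of $G$ with the $P_3$'s relabelled (for $\Phi_3,\Phi_4,\Phi_5$), and in the latter case the link-inclusions needed for Lemma~\ref{lem:links} still hold after relabelling. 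With this correction your uniform plan does go through for all five graphs.

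Second, the paper itself does \emph{not} run this argument uniformly. For $\Phi_2$ and $\Phi_3$ it instead uses the doubling Lemma~\ref{lem:double}: it writes $C(\Phi_1)=C(P_7\opp)\ast_G/\fform{t^2}$ and $C(P_7\opp)=G\ast_H/\fform{d^2}$, and then observes that $C(\Phi_2)$ and $C(\Phi_3)$ are exactly the doubles $C(P_7\opp)\ast_G C(P_7\opp)$ and $G\ast_H G$, hence embed in $C(\Phi_1)$ and $C(P_7\opp)$ respectively. This is shorter than re-verifying smallness on both halves, but your corrected approach is a legitimate alternative. Your backup plan involving HNN extensions and Lemma~\ref{lem:transverse}(2) is not needed.
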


\begin{proof}
We use the vertex labels shown in Figure~\ref{fig:8v}. We also set $H=\form{a,b,f,g}\le G=\form{a,b,c,e,f,g}\le C(P_7\opp)=\form{a,b,c,d,e,f,g}$ as considered in Figure~\ref{fig:7v} (c). In Example~\ref{exmp:p3}, we have seen that $(G,H)$ is small.

(\textbf{Case}  $\Phi_1$)
We can write  
$C(\Phi_1) = (\form{a,b,c}\times\form{e,f,g}\times\form{t})\ast_{\form{a,b}\times\form{f,g}}
(\form{a,b}\times\form{d}\times\form{f,g})$.
By Lemma~\ref{lem:small product} and Example~\ref{exmp:p3},
$(\form{a,b,c}\times\form{e,f,g}\times\form{t}, {\form{a,b}\times\form{f,g}})$ is small.
Since $\form{a,b}\times\form{d}\times\form{f,g}$ is virtually abelian,
it contains no $F_2$ and hence, $(\form{a,b}\times\form{d}\times\form{f,g},\form{a,b}\times\form{f,g})$ is small.
Lemma~\ref{lem:transverse} (1) implies that $C(\Phi_1)\not\in\mS$.

(\textbf{Case}  $\Phi_2$)
We note that $C(P_7\opp)\le C(\Phi_1)$ and so, $C(P_7\opp)\not\in\mS$.
Moreover, we have $C(\Phi_1) = C(P_7\opp)\ast_G /\fform{t^2}$ 
where $t$ is the stable generator.
By Lemma~\ref{lem:double}, $C(\Phi_2)=\form{a,b,c,d,e,f,g}\ast_{\form{a,b,c,e,f,g}}\form{a,b,c,d',e,f,g}\le C(\Phi_1)$.

(\textbf{Case}  $\Phi_3$)
Note that $C(P_7\opp) = G\ast_H/\fform{d^2}$ where $d$ is the stable generator. 
Hence, $C(\Phi_3) = \form{a,b,c,e,f,g}\ast_{\form{a,b,f,g}} \form{a,b,c',e',f,g}\le C(P_7\opp)$.

(\textbf{Case}  $\Phi_4$)
Let us consider $\psi\co  H\to H$ defined by $\psi(a)=a,\psi(b)=b,\psi(f)=g$ and $\psi(g)=f$. 
We see that $G\ast_{\psi\co H\to H} G
=\form{a,b,c,e,f,g}\ast_\psi \form{a,b,c',e',f'=g,g'=f} = C(\Phi_4)$.
Lemma~\ref{lem:transverse} (1) 
and Example~\ref{exmp:p3} imply that $C(\Phi_4)\not\in\mS$.

(\textbf{Case}  $\Phi_5$)
Similarly, define $\psi\co H\to H$ by $\psi(a)=b,\psi(b)=a,\psi(f)=g$ and $\psi(g)=f$. 
We again see that $G\ast_{\psi\co H\to H} G
=\form{a,b,c,e,f,g}\ast_\psi \form{a'=b,b'=a,c',e',f'=g,g'=f}= C(\Phi_5)\not\in\mS$.
\end{proof}

\begin{figure}[htb!]
  \tikzstyle {b}=[postaction=decorate,decoration={%
    markings,%
    mark=at position .57 with {\arrow{stealth};},%
    }]
  \tikzstyle {v}=[draw,shape=circle,fill=black,inner sep=0pt]
  \tikzstyle {bv}=[black,draw,shape=circle,fill=black,inner sep=1pt]
\tikzstyle{interrupt}=[
    postaction={
        decorate,
        decoration={markings,
                    mark= at position 0.5 
                          with
                          {
                            \fill[white] (-0.1,-0.1) rectangle (0.1,0.1);
                          }
                    }
                }
]  
  \tikzstyle{every edge}=[-,draw]
	\subfloat[(a) $\Phi_1\opp$]{
	\begin{tikzpicture}[thick]
    	  \draw  (-1.5,0) node  [bv]  (am) {} node[above] {$\scriptstyle a$} 
    	  -- (-1,0) node [bv] (ap) {} node[above] {$\scriptstyle b$}
   	  -- (-.5,0) node [bv] (ap) {} node[above] {$\scriptstyle c$}
   	  -- (0,0) node [bv] (ap) {} node[above] {$\scriptstyle d$}	  
   	  -- (.5,0) node [bv] (ap) {} node[above] {$\scriptstyle e$}	  
   	  -- (1,0) node [bv] (ap) {} node[above] {$\scriptstyle f$}
   	  -- (1.5,0) node [bv] (ap) {} node[above] {$\scriptstyle g$};
	  \draw (0,0) node [] {} -- (0,-.5) node [bv] {} node [right] {$\scriptstyle t$};
	\end{tikzpicture}
	}
	\qquad
	\qquad
	\subfloat[(b) $\Phi_2\opp$]{
	\begin{tikzpicture}[thick]
    	  \draw  (-1.5,0) node  [bv]   {} node[above] {$\scriptstyle a$} 
    	  -- (-1,0) node [bv]  {} node[above] {$\scriptstyle b$}
   	  -- (-.5,0) node [bv]  {} node[above] {$\scriptstyle c$}
   	  -- (0,.3) node [bv]  {} node[above] {$\scriptstyle d$}
	  --  (.5,0) node  [bv]   {} node[above] {$\scriptstyle e$} 
    	  -- (1,0) node [bv]  {} node[above] {$\scriptstyle f$}
   	  -- (1.5,0) node [bv]  {} node[above] {$\scriptstyle g$};	  
    	  \draw   (-.5,0) node []  {}
   	  -- (0,-.3) node [bv]  {} node[below] {$\scriptstyle d'$}
	  --  (.5,0) node  []  {};
    	  \draw 	(0,-.3) node [] {} --  (0,.3) node  []  {};
	\end{tikzpicture}	}
	\qquad
	\qquad
	\subfloat[(c) $\Phi_3\opp$]{
	\begin{tikzpicture}[thick]
    	  \draw  (-1.25,0) node  [bv]   {} node[above] {$\scriptstyle a$} 
    	  -- (-.75,0) node [bv]  {} node[above] {$\scriptstyle b$}
   	  -- (-.25,.3) node [bv]  {} node[above] {$\scriptstyle c$}
	   (.25,.3) node  [bv]   {} node[above] {$\scriptstyle e$} 
    	  -- (.75,0) node [bv]  {} node[above] {$\scriptstyle f$}
   	  -- (1.25,0) node [bv]  {} node[above] {$\scriptstyle g$};	  
    	  \draw   (-.75,0) node []  {} 
   	  -- (-.25,-.3) node [bv]  {} node[below] {$\scriptstyle c'$}
	    (.25,-.3) node  [bv]  {} node[below] {$\scriptstyle e'$} 
    	  -- (.75,0) node []  {} node[above] {};
    	  \draw 	(-.25,-.3) node [] {} --  (-.25,.3) node  []  {};
    	  \draw 	(.25,-.3) node [] {} --  (.25,.3) node  []  {};	  
    	  \draw[interrupt] 	(-.25,-.3) node [] {} -- (.25,.3) node []  {};		  
    	  \draw 	(-.25,.3) node [] {} --  (.25,-.3) node  []  {};
	\end{tikzpicture}	}
	\qquad
	\qquad
	\subfloat[(d) $\Phi_4\opp$]{
	\begin{tikzpicture}[thick]
    	  \draw  (-1.25,0) node  [bv]   {} node[above] {$\scriptstyle a$} 
    	  -- (-.75,0) node [bv]  {} node[above] {$\scriptstyle b$}
   	  -- (-.25,.3) node [bv]  {} node[above] {$\scriptstyle c$}
	    (.25,.3) node  [bv]   {} node[above] {$\scriptstyle e$} 
    	  -- (.75,.3) node [bv]  {} node[above] {$\scriptstyle f$}
   	  -- (.75,-.3) node [bv]  {} node[below] {$\scriptstyle g$}
	  --  (.25,-.3) node  [bv]  {} node[below] {$\scriptstyle e'$} 
   	   (-.25,-.3) node [bv]  {} node[below] {$\scriptstyle c'$}
	  -- (-.75,0) node []  {}; 
    	  \draw[interrupt] 	(-.25,-.3) node [] {} -- (.25,.3) node []  {};		  
    	  \draw 	(-.25,.3) node [] {} --  (.25,-.3) node  []  {};	  	  
    	  \draw 	(-.25,-.3) node [] {} --  (-.25,.3) node  []  {};
    	  \draw 	(.25,-.3) node [] {} --  (.25,.3) node  []  {};	  
	\end{tikzpicture}	}
	\qquad
	\qquad
	\subfloat[(e) $\Phi_5\opp$]{
	\begin{tikzpicture}[thick]
    	  \draw  (-.75,-.3) node  [bv]   {} node[below] {$\scriptstyle a$} 
    	  -- (-.75,.3) node [bv]  {} node[above] {$\scriptstyle b$}
   	  -- (-.25,.3) node [bv]  {} node[above] {$\scriptstyle c$}
	    (.25,.3) node  [bv]   {} node[above] {$\scriptstyle e$} 
    	  -- (.75,.3) node [bv]  {} node[above] {$\scriptstyle f$}
   	  -- (.75,-.3) node [bv]  {} node[below] {$\scriptstyle g$}
	  --  (.25,-.3) node  [bv]  {} node[below] {$\scriptstyle e'$} 
   	   (-.25,-.3) node [bv]  {} node[below] {$\scriptstyle c'$}
	  -- (-.75,-.3) node []  {}; 	  
    	  \draw 	(-.25,-.3) node [] {} --  (-.25,.3) node  []  {};
    	  \draw 	(.25,-.3) node [] {} --  (.25,.3) node  []  {};	
    	  \draw[interrupt] 	(-.25,-.3) node [] {} -- (.25,.3) node []  {};		  
    	  \draw 	(-.25,.3) node [] {} --  (.25,-.3) node  []  {};	  	  
	\end{tikzpicture}	}
\caption{Some eight-vertex graphs in Lemma~\ref{lem:8v}.}
\label{fig:8v}
\end{figure}

\begin{proof}[Proof of Theorem~\ref{thm:7v}]
We use notations from Lemma~\ref{lem:8v} and Figure~\ref{fig:8v}.
The backward direction is a restatement of Corollaries~\ref{cor:cycle} and~\ref{cor:anticycle}.

For the forward direction, suppose $\Gam$ is weakly chordal and $C(\Gam)\in\mS$.
Since $[C(\Gam),C(\Gam)]\le [A(\Gam),A(\Gam)]$,
we see that $A(\Gam)\in\mS$.
By the result in~\cite[Section 7]{css2008}, $\Gam$ contains $P_6\opp, P_1(7)$ or $P_2(7)$ as an induced subgraph; see Figure~\ref{fig:forbidden}.
Since $P_1(7)\le \Phi_4$ and $P_2(7)\le \Phi_3$, Lemma~\ref{lem:8v} implies that $C(P_1(7))\not\in\mS$ and $C(P_2(7))\not\in\mS$.
Hence $P_6\opp\le\Gam$. Moreover, $|V(\Gam)|=7$ since $C(P_6\opp)\le C(P_7\opp)\le C(\Phi_1)\not\in\mS$.

We fix the vertex labels of $P_6\le\Gam\opp$ as in Figure~\ref{fig:7v} (b) and let $V(\Gam)\ssm V(P_6\opp)=\{t\}$.
By $\val{t}$, we will mean the valence of $t$ in $\Gam$.

(\textbf{Case} $\val{t}=0$ or $1$)
Note $C(\Gam)=C(P_6\opp)\ast\Z_2$ or $C(\Gam)=C(P_6\opp)\ast_{\mathbb{Z}_2} (\mathbb{Z}_2)^2$.
By applying Lemma~\ref{lem:finite amalgam}, we obtain a contradiction that $C(\Gam)\not\in\mS$.
 
(\textbf{Case}  $\val{t}=2$)
In $\Gam$, the vertex $t$ is joined to two vertices, say $x,y$ of $P_6\opp$. 
We may write $C(\Gam)=C(P_6\opp)\ast_{\form{x,y}} \form{x,y,t}$.
If $x$ and $y$ are adjacent in $P_6\opp$, then 
 $C(\Gam)=C(P_6\opp)\ast_{(\mathbb{Z}_2)^2} (\mathbb{Z}_2)^3$; then, Lemma~\ref{lem:finite amalgam} implies that  $C(\Gam)\not\in\mS$.
So, $x$ and $y$ are adjacent in $P_6\le\Gam\opp$. 
Since $\Gam\opp$ has no induced $C_5$, we should have $\Gam\cong\Lambda_1$; see Figure~\ref{fig:7v full} (a). 
We have $C(\Lambda_1) = (\form{a,b}\times \form{d,e,f,t})\ast_{\form{a,e,f}}(\form{a}\times\form{c}\times\form{e,f})$.
By Lemmas~\ref{lem:links} and~\ref{lem:small product}, 
$(\form{a,b}\times\form{d,e,f,t},\form{a}\times\form{e,f})$ is small. Since $\form{a,c,e,f}$ is virtually abelian, we have $C(\Lambda_1)\not\in\mS$ by~Lemma~\ref{lem:transverse} (1). This is a contradiction.

(\textbf{Case}  $\Gam=\Lambda_2$; see Figure~\ref{fig:7v full} (b))
By Lemma~\ref{lem:double},
$C(\Lambda_2) = \form{a,b,c,d,e,f}\ast_{\form{b,c,d,e,f}}\form{t,b,c,d,e,f}$ embeds into
$\form{a,b,c,d,e,f}\ast_{\form{b,c,d,e,f}}/\fform{s^2}\cong C(P_7\opp)\not\in\mS$, where $s$ denotes the stable generator.

(\textbf{Case}  $\Gam=\Lambda_3$; see Figure~\ref{fig:7v full} (c))
Using the vertex labels of $P_6\opp$ in Figure~\ref{fig:7v} (b), 
we see that $C(\Lambda_3) \cong GP(P_6\opp,\{G_a=\Z_2\times\Z_2,G_b=G_c=\cdots=G_f=\Z_2\})$. Corollary~\ref{cor:inclusion} implies that $C(\Lambda_3)$ virtually embeds into $GP(P_6\opp,\{G_a=\Z_2\ast\Z_2,G_b=G_c=\cdots=G_f=\Z_2\})\cong C(\Lambda_2)\not\in\mS$.

(\textbf{Case}  $\val{t}=3$)
If $a$ and $f$ are both adjacent to $t$ in $\Gam\opp$, then only one of $b,c,d,e$ are adjacent to $t$ in $\Gam\opp$. This implies that $\Gam\opp$ contains an induced $C_5$, and hence a contradiction. So, we may assume $a$ is not adjacent to $t$ in $\Gam\opp$.
Let us say $x$ and $y$ are the other two vertices of $P_6\le\Gam\opp$ that are non-adjacent to $t$ in $\Gam\opp$.
If $\{a,x,y\}$ are pairwise non-adjacent in $P_6$, then 
$C(\Gam) = C(P_6\opp)\ast_{\form{x,y,z}} \form{t,x,y,z}=C(P_6\opp)\ast_{(\mathbb{Z}_2)^3} (\mathbb{Z}_2)^4\not\in\mS$. So there exist at least two vertices in $\{a,x,y\}$ that are adjacent in $P_6$. 
If $b\in\{x,y\}$, then $\Gam \cong \Lambda_i$ for $i=4,5,6,7$; see Figure~\ref{fig:7v full} (d) through (g).
If $b\not\in\{x,y\}$, then $x$ and $y$ must be adjacent in $P_6$.
Since $C_5\not\le\Gam\opp$, we would have a graph isomorphism $\Gam\cong \Lambda_5$. 

If $\Gam\cong \Lambda_4$, then $C(\Gam) = \form{a,b,c,d,e,f}\ast_{\form{a,b,c,d,f}}\form{a,b,c,d,t,f}
\le \form{a,b,c,d,e,f}\ast_{\form{a,b,c,d,f}}/\fform{s^2}\cong C(\Lambda_3)$ where $s$ is the stable generator.
Similarly if $\Gam=\Lambda_5$, then $C(\Gam)= \form{a,b,c,d,e,f}\ast_{\form{a,b,c,e,f}}\form{a,b,c,t,e,f}\le\form{a,b,c,d,e,f}\ast_{\form{a,b,c,e,f}}/\fform{s^2}\cong C(\Lambda)$ where $s$ is the stable generator and $\Lambda\opp$ is the subgraph of $\Phi_1\opp$ induced by $\{a,b,c,d,e,f,t\}$; see~Figure~\ref{fig:8v} (a).

Suppose $\Gam=\Lambda_6$. 
Then $C(\Gam)= (\form{a,b,c}\times\form{e,f})\ast_{\form{a,b}\times\form{f}}
(\form{a,b}\times\form{d}\times\form{t,f})$. 
Since $\form{a,b}\times\form{d}\times\form{t,f}$ is virtually abelian, 
Example~\ref{exmp:p3} implies that $C(\Gam)\not\in\mS$.

Consider the case $\Gam=\Lambda_7$.
Then $\Gam\opp$ is obtained from $\Phi_4\opp$ in Figure~\ref{fig:8v} (d) by contracting $\{c,c'\}$ to a vertex. By Theorem~\ref{thm:cocont}, $C(\Gam)$ embeds into $C(\Phi_4)$.

(\textbf{Case}  $\val{t}=4$)
Let $x$ and $y$ be the two vertices adjacent to $t$ in $\Gam\opp$.
Since $C_5\not\le\Gam\opp$, we see that $d(x,y)\le 2$ in $P_6$. So $\Gam\cong\Lambda_i$ for $i=2,8,9,10,11$; see Figure~\ref{fig:7v full}.

By Corollary~\ref{cor:inclusion}, $C(\Lambda_ 8)$ and $C(\Lambda_9)$ virtually embed into $C(\Lambda_4)$ and $C(\Lambda_5)$, respectively.
If $\Gam=\Lambda_{10}$, write  
$C(\Lambda_{10}) = (\form{a,b,t}\times\form{d,e,f})\ast_{\form{a}\times\form{e,f}}
(\form{a}\times\form{c}\times\form{e,f})\not\in\mS$.
We see that $\Phi_2\opp$ contracts onto $\Lambda_{11}\opp$; see~Figure~\ref{fig:8v} (b) and Figure~\ref{fig:7v full} (k). So, $C(\Lambda_{11})\le C(\Phi_2)$.

(\textbf{Case}  $\val{t}=5$)
Either $\Gam\le \Phi_1$ or $\Gam\cong\Lambda_3$.

(\textbf{Case}  $\val{t}=6$)
Note $C(\Gam)=C(P_6\opp)\times\form{t}$.

\end{proof}

\begin{figure}[htb!]
  \tikzstyle {b}=[postaction=decorate,decoration={%
    markings,%
    mark=at position .57 with {\arrow{stealth};},%
    }]
  \tikzstyle {v}=[draw,shape=circle,fill=black,inner sep=0pt]
  \tikzstyle {bv}=[black,draw,shape=circle,fill=black,inner sep=1pt]
\tikzstyle{interrupt}=[
    postaction={
        decorate,
        decoration={markings,
                    mark= at position 0.5 
                          with
                          {
                            \fill[white] (-0.1,-0.1) rectangle (0.1,0.1);
                          }
                    }
                }
]  
  \tikzstyle{every edge}=[-,draw]
	\subfloat[(a) $\Lambda_1\opp$]{
	\begin{tikzpicture}[thick]
    	  \draw  (-1.5,0) node  [bv]  (am) {} node[above] {$\scriptstyle a$} 
    	  -- (-1,0) node [bv] (ap) {} node[above] {$\scriptstyle b$}
   	  -- (-.5,0) node [bv] (ap) {} node[above] {$\scriptstyle c$}
   	  -- (0,0) node [bv] (ap) {} node[above] {$\scriptstyle d$}	  
   	  -- (.5,0) node [bv] (ap) {} node[above] {$\scriptstyle e$}	  
   	  -- (1,0) node [bv] (ap) {} node[above] {$\scriptstyle f$};
	  \draw (-.5,0) node [] {} -- (.25,-.5) node [bv] {} node [right] {$\scriptstyle t$};
	  \draw (0,0) node [] {} -- (.25,-.5) node [] {};
	  \draw (.5,0) node [] {} -- (.25,-.5) node [] {};	  
	  \draw (1,0) node [] {} -- (.25,-.5) node [] {};	  	  	  	  
	\end{tikzpicture}
	}
	\qquad
	\subfloat[(b) $\Lambda_2\opp$]{
	\begin{tikzpicture}[thick]
    	  \draw  (-1.5,0) node  [bv]  (am) {} node[above] {$\scriptstyle a$} 
    	  -- (-1,0) node [bv] (ap) {} node[above] {$\scriptstyle b$}
   	  -- (-.5,0) node [bv] (ap) {} node[above] {$\scriptstyle c$}
   	  -- (0,0) node [bv] (ap) {} node[above] {$\scriptstyle d$}	  
   	  -- (.5,0) node [bv] (ap) {} node[above] {$\scriptstyle e$}	  
   	  -- (1,0) node [bv] (ap) {} node[above] {$\scriptstyle f$};
	  \draw (-1.5,0) node [] {} -- (-1,-.5) node [bv] {} node [right] {$\scriptstyle t$};
	  \draw (-1,0) node [] {} -- (-1,-.5) node [] {};	  
	\end{tikzpicture}
	}		
	\qquad		
	\subfloat[(c) $\Lambda_3\opp$]{
	\begin{tikzpicture}[thick]
    	  \draw  (-1.5,0) node  [bv]  (am) {} node[above] {$\scriptstyle a$} 
    	  -- (-1,0) node [bv] (ap) {} node[above] {$\scriptstyle b$}
   	  -- (-.5,0) node [bv] (ap) {} node[above] {$\scriptstyle c$}
   	  -- (0,0) node [bv] (ap) {} node[above] {$\scriptstyle d$}	  
   	  -- (.5,0) node [bv] (ap) {} node[above] {$\scriptstyle e$}	  
   	  -- (1,0) node [bv] (ap) {} node[above] {$\scriptstyle f$};
	  \draw (-1,0) node [] {} -- (-1,-.5) node [bv] {} node [right] {$\scriptstyle t$};
	\end{tikzpicture}
	}		
	\qquad
	\subfloat[(d) $\Lambda_4\opp$]{
	\begin{tikzpicture}[thick]
    	  \draw  (-1.5,0) node  [bv]  (am) {} node[above] {$\scriptstyle a$} 
    	  -- (-1,0) node [bv] (ap) {} node[above] {$\scriptstyle b$}
   	  -- (-.5,0) node [bv] (ap) {} node[above] {$\scriptstyle c$}
   	  -- (0,0) node [bv] (ap) {} node[above] {$\scriptstyle d$}	  
   	  -- (.5,0) node [bv] (ap) {} node[above] {$\scriptstyle e$}	  
   	  -- (1,0) node [bv] (ap) {} node[above] {$\scriptstyle f$};
	  \draw (0,0) node [] {} -- (.25,-.5) node [bv] {} (.3,-.5) node [right] {$\scriptstyle t$};
	  \draw (.5,0) node [] {} -- (.25,-.5) node [] {};	  
	  \draw (1,0) node [] {} -- (.25,-.5) node [] {};	  	  	  	  
	\end{tikzpicture}
	}
	\qquad
	\subfloat[(e) $\Lambda_5\opp$]{
	\begin{tikzpicture}[thick]
    	  \draw  (-1.5,0) node  [bv]  (am) {} node[above] {$\scriptstyle a$} 
    	  -- (-1,0) node [bv] (ap) {} node[above] {$\scriptstyle b$}
   	  -- (-.5,0) node [bv] (ap) {} node[above] {$\scriptstyle c$}
   	  -- (0,0) node [bv] (ap) {} node[above] {$\scriptstyle d$}	  
   	  -- (.5,0) node [bv] (ap) {} node[above] {$\scriptstyle e$}	  
   	  -- (1,0) node [bv] (ap) {} node[above] {$\scriptstyle f$};
	  \draw (0,0) node [] {} -- (.25,-.5) node [bv] {} node [right] {$\scriptstyle t$};
	  \draw (.5,0) node [] {} -- (.25,-.5) node [] {};	  
	  \draw (-.5,0) node [] {} -- (.25,-.5) node [] {};	  	  	  	  
	\end{tikzpicture}
	}
	\qquad
	\subfloat[(f) $\Lambda_6\opp$]{
	\begin{tikzpicture}[thick]
    	  \draw  (-1.5,0) node  [bv]  (am) {} node[above] {$\scriptstyle a$} 
    	  -- (-1,0) node [bv] (ap) {} node[above] {$\scriptstyle b$}
   	  -- (-.5,0) node [bv] (ap) {} node[above] {$\scriptstyle c$}
   	  -- (0,0) node [bv] (ap) {} node[above] {$\scriptstyle d$}	  
   	  -- (.5,0) node [bv] (ap) {} node[above] {$\scriptstyle e$}	  
   	  -- (1,0) node [bv] (ap) {} node[above] {$\scriptstyle f$};
	  \draw (-.5,0) node [] {} -- (.25,-.5) node [bv] {} (.3,-.5) node [right] {$\scriptstyle t$};
	  \draw (.5,0) node [] {} -- (.25,-.5) node [] {};	  
	  \draw (1,0) node [] {} -- (.25,-.5) node [] {};	  	  	  	  
	\end{tikzpicture}
	}
	\qquad
	\subfloat[(g) $\Lambda_7\opp$]{
	\begin{tikzpicture}[thick]
    	  \draw  (-1.5,0) node  [bv]  (am) {} node[above] {$\scriptstyle a$} 
    	  -- (-1,0) node [bv] (ap) {} node[above] {$\scriptstyle b$}
   	  -- (-.5,0) node [bv] (ap) {} node[above] {$\scriptstyle c$}
   	  -- (0,0) node [bv] (ap) {} node[above] {$\scriptstyle d$}	  
   	  -- (.5,0) node [bv] (ap) {} node[above] {$\scriptstyle e$}	  
   	  -- (1,0) node [bv] (ap) {} node[above] {$\scriptstyle f$};
	  \draw (0,0) node [] {} -- (.25,-.5) node [bv] {} (.33,-.5) node [right] {$\scriptstyle t$};
	  \draw (-.5,0) node [] {} -- (.25,-.5) node [] {};	  
	  \draw (1,0) node [] {} -- (.25,-.5) node [] {};	  	  	  	  
	\end{tikzpicture}
	}	
	\qquad
	\subfloat[(h) $\Lambda_8\opp$]{
	\begin{tikzpicture}[thick]
    	  \draw  (-1.5,0) node  [bv]  (am) {} node[above] {$\scriptstyle a$} 
    	  -- (-1,0) node [bv] (ap) {} node[above] {$\scriptstyle b$}
   	  -- (-.5,0) node [bv] (ap) {} node[above] {$\scriptstyle c$}
   	  -- (0,0) node [bv] (ap) {} node[above] {$\scriptstyle d$}	  
   	  -- (.5,0) node [bv] (ap) {} node[above] {$\scriptstyle e$}	  
   	  -- (1,0) node [bv] (ap) {} node[above] {$\scriptstyle f$};
	  \draw (-1.5,0) node [] {} -- (-1,-.5) node [bv] {} node [right] {$\scriptstyle t$};
	  \draw (-.5,0) node [] {} -- (-1,-.5) node [] {};	  
	\end{tikzpicture}
	}	
	\qquad
	\subfloat[(i) $\Lambda_9\opp$]{
	\begin{tikzpicture}[thick]
    	  \draw  (-1.5,0) node  [bv]  (am) {} node[above] {$\scriptstyle a$} 
    	  -- (-1,0) node [bv] (ap) {} node[above] {$\scriptstyle b$}
   	  -- (-.5,0) node [bv] (ap) {} node[above] {$\scriptstyle c$}
   	  -- (0,0) node [bv] (ap) {} node[above] {$\scriptstyle d$}	  
   	  -- (.5,0) node [bv] (ap) {} node[above] {$\scriptstyle e$}	  
   	  -- (1,0) node [bv] (ap) {} node[above] {$\scriptstyle f$};
	  \draw (-1,0) node [] {} -- (-.5,-.5) node [bv] {} node [right] {$\scriptstyle t$};
	  \draw (-0,0) node [] {} -- (-.5,-.5) node [] {};	  
	\end{tikzpicture}
	}	
	\qquad
	\subfloat[(j) $\Lambda_{10}\opp$]{
	\begin{tikzpicture}[thick]
    	  \draw  (-1.5,0) node  [bv]  (am) {} node[above] {$\scriptstyle a$} 
    	  -- (-1,0) node [bv] (ap) {} node[above] {$\scriptstyle b$}
   	  -- (-.5,0) node [bv] (ap) {} node[above] {$\scriptstyle c$}
   	  -- (0,0) node [bv] (ap) {} node[above] {$\scriptstyle d$}	  
   	  -- (.5,0) node [bv] (ap) {} node[above] {$\scriptstyle e$}	  
   	  -- (1,0) node [bv] (ap) {} node[above] {$\scriptstyle f$};
	  \draw (-1,0) node [] {} -- (-1,-.5) node [bv] {} node [right] {$\scriptstyle t$};
	  \draw (-.5,0) node [] {} -- (-1,-.5) node [] {};	  
	\end{tikzpicture}
	}		
	\qquad
	\subfloat[(k) $\Lambda_{11}\opp$]{
	\begin{tikzpicture}[thick]
    	  \draw  (-1.5,0) node  [bv]  (am) {} node[above] {$\scriptstyle a$} 
    	  -- (-1,0) node [bv] (ap) {} node[above] {$\scriptstyle b$}
   	  -- (-.5,0) node [bv] (ap) {} node[above] {$\scriptstyle c$}
   	  -- (0,0) node [bv] (ap) {} node[above] {$\scriptstyle d$}	  
   	  -- (.5,0) node [bv] (ap) {} node[above] {$\scriptstyle e$}	  
   	  -- (1,0) node [bv] (ap) {} node[above] {$\scriptstyle f$};
	  \draw (-.5,0) node [] {} -- (-.5,-.5) node [bv] {} node [right] {$\scriptstyle t$};
	  \draw (-0,0) node [] {} -- (-.5,-.5) node [] {};	  
	\end{tikzpicture}
	}						
\caption{Seven-vertex graphs in Theorem~\ref{thm:7v}.}
\label{fig:7v full}
\end{figure}

\begin{rem}\label{rem:w chordal}
\be
\item
Theorem~\ref{thm:7v} is not true if $\Gam$ has more than seven vertices.
That is, there exists a weakly chordal graph $\Gam$ such that $C(\Gam)\in\mS$.
For example, let $\Gam$ be the graph whose opposite graph is shown in Figure~\ref{fig:p6double}.
By~\cite{DJ2000}, $A(P_6\opp)$ is an index-64 subgroup of $C(\Gam)$ and so, $C(\Gam)\in\mS$.
\begin{figure}[htb!]
  \tikzstyle {b}=[postaction=decorate,decoration={%
    markings,%
    mark=at position .57 with {\arrow{stealth};},%
    }]
  \tikzstyle {v}=[draw,shape=circle,fill=black,inner sep=0pt]
  \tikzstyle {bv}=[black,draw,shape=circle,fill=black,inner sep=1pt]
\tikzstyle{interrupt}=[
    postaction={
        decorate,
        decoration={markings,
                    mark= at position 0.5 
                          with
                          {
                            \fill[white] (-0.1,-0.1) rectangle (0.1,0.1);
                          }
                    }
                }
]  
  \tikzstyle{every edge}=[-,draw]
	\begin{tikzpicture}[thick]
    	  \draw  (-1.5,0) node  [bv]  (am) {}
    	  -- (-1,0) node [bv] (ap) {}
   	  -- (-.5,0) node [bv] (ap) {}
   	  -- (0,0) node [bv] (ap) {}
   	  -- (.5,0) node [bv] (ap) {}
   	  -- (1,0) node [bv] (ap) {};
	  \draw (-1.5,0) node [] {} -- (-1.5,-.5) node [bv] {};
	  \draw (-1,0) node [] {} -- (-1,-.5) node [bv] {};
	  \draw (-.5,0) node [] {} -- (-.5,-.5) node [bv] {};
	  \draw (0,0) node [] {} -- (0,-.5) node [bv] {};
	  \draw (.5,0) node [] {} -- (.5,-.5) node [bv] {};
	  \draw (1,0) node [] {} -- (1,-.5) node [bv] {};
	\end{tikzpicture}					
\caption{The graph $\Gam\opp$ in Proposition~\ref{rem:w chordal}.}
\label{fig:p6double}
\end{figure}
\item
There exists a graph $\Gam$ such that $A(\Gam)\in\mS$ and $C(\Gam)\not\in\mS$.
For instance, we may set $\Gam$ as one of the graphs $P_6\opp, P_1(7)$ or $P_2(7)$.
\ee
\end{rem}

\begin{prob}
\be
\item
Does there exist a graph $\Gam$ such that $[A(\Gam),A(\Gam)]\not\in\mS$ while $A(\Gam)\in\mS$?
\item
Does there exist a graph $\Gam$ such that $[A(\Gam),A(\Gam)]\in\mS$ while $C(\Gam)\not\in\mS$?
\ee
\end{prob}

\section{Closure under graph products}\label{sec:gp closure}
A group $G$ is \emph{periodic} if every element of $G$ has a torsion. The following is well-known.

\begin{lem}[{\cite{GH1990}}]\label{lem:hyp}
A word-hyperbolic group does not have an infinite periodic subgroup.
\end{lem}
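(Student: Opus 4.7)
The plan is to argue by contradiction. Suppose $G$ is word-hyperbolic and $H\le G$ is an infinite periodic subgroup; the strategy is to produce an element of infinite order inside $H$, contradicting the standing hypothesis that $H$ is periodic.

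My preferred route is via the classical subgroup dichotomy for hyperbolic groups due to Gromov and recorded in~\cite{GH1990}: every subgroup of a word-hyperbolic group is either virtually cyclic or contains a nonabelian free subgroup. If $H$ is virtually cyclic, then because $H$ is infinite its finite-index cyclic part is infinite cyclic, so $H$ contains an element of infinite order. If instead $H$ contains a nonabelian free subgroup, then its free generators also have infinite order. Either alternative contradicts periodicity, so no such $H$ can exist.

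A more hands-on, self-contained variant would have $H$ act on a Cayley graph $X$ of $G$. Since $X$ is locally finite and $H$ acts freely, infinitude of $H$ forces $H$-orbits in $X$ to be unbounded, so there is a sequence $h_n\in H$ with $d(1,h_n)\to\infty$. In a $\delta$-hyperbolic space, a standard pigeonhole and ping-pong argument, applied to such a sequence in a group containing no loxodromic (equivalently, no infinite-order) element, yields a contradiction by manufacturing from sufficiently long elements a product with positive stable translation length, which is necessarily of infinite order.

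The main obstacle is exactly this step of extracting an infinite-order element from an arbitrary infinite subgroup of a hyperbolic group; everything else is formal. This is most compactly handled by citing the subgroup dichotomy of~\cite{GH1990}, which is precisely how the statement is attributed in the excerpt.
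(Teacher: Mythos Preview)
The paper does not prove this lemma at all; it is quoted as a well-known fact with a bare citation to~\cite{GH1990} and used as a black box in the proof of Theorem~\ref{thm:closure}. So there is no ``paper's proof'' to compare your argument against, and your proposal is strictly more than what the paper supplies.

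One caveat on your preferred route. Deducing the lemma from the subgroup dichotomy (the Tits alternative for hyperbolic groups) is formally valid, but it is circular as an independent proof: in the standard treatments, including~\cite{GH1990}, the \emph{first} step in establishing that dichotomy is exactly to show that every infinite subgroup of a hyperbolic group contains an infinite-order (loxodromic) element. Only then does one split into ``all loxodromics share an axis'' (virtually cyclic) versus ``independent loxodromics'' (contains $F_2$). So your first route quotes a corollary of the lemma to prove the lemma. Your second, hands-on sketch is much closer to the actual content of the argument in~\cite{GH1990}: from elements moving the basepoint far relative to $\delta$, one manufactures a product with positive stable translation length. If you want something self-contained rather than a citation, flesh out that route; otherwise, simply citing~\cite{GH1990} as the paper does is entirely adequate here.
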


Let us denote by $\mX$ the class of finitely generated groups that are either
\begin{enumerate}[(i)]
\item not one-ended, or
\item not word-hyperbolic, or
\item containing hyperbolic surface groups.
\end{enumerate}

An affirmative answer to Question~\ref{que:gromov} is equivalent to saying that every finitely generated group is in $\mX$. How large do we know $\mX$ is? We prove that $\mX$ is closed under graph products.

\begin{thm}\label{thm:closure}
If $\mG=\{G_v\co v\in V(\Gam)\}$ is a collection of groups in $\mX$, then $GP(\Gam,\mG)$ is in $\mX$.
\end{thm}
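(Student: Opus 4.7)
The plan is to induct on $n=|V(\Gam)|$. The cases $n\le 1$ are immediate, so let $n\ge 2$ and suppose for contradiction that $G:=GP(\Gam,\mG)\notin\mX$; then $G$ is finitely generated, one-ended, word-hyperbolic, and contains no hyperbolic surface group. After deleting trivial vertex groups we may assume every $G_v\ne 1$. Several structural restrictions follow at once: (a) each $G_v$ is finitely generated, being a retract of $G$; (b) by Lemma~\ref{lem:hyp}, $G$ has no infinite periodic subgroup, so each $G_v$ is either finite or contains $\Z$; (c) for every edge $\{u,v\}\in E(\Gam)$ at most one of $G_u,G_v$ is infinite, else $\Z^2\le G_u\times G_v\le G$; (d) $\Gam$ is connected, else $G$ is a nontrivial free product and has more than one end by Stallings; (e) $\Gam$ has no induced $C_m$ or $C_m\opp$ with $m\ge 5$, by Corollaries~\ref{cor:cycle} and~\ref{cor:anticycle}.

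The central claim to establish next is that \emph{every vertex $v$ with $G_v$ infinite must be a cone vertex of $\Gam$.} Fix $g\in G_v$ of infinite order; every element of $GP(\lk(v))$ centralizes $g$, so if $GP(\lk(v))$ contained an infinite-order element $h$, then $\form{g,h}\cong\Z^2\le G$, contradicting word-hyperbolicity. Hence $GP(\lk(v))$ is a torsion group. But $\lk(v)$ consists entirely of vertices with finite vertex groups by~(c), and a graph product of nontrivial finite groups is torsion precisely when the defining graph is a clique; so $\lk(v)$ is a clique and $GP(\lk(v))=\prod_{u\in\lk(v)}G_u$ is finite. If $v$ were not conical, the nontrivial splitting $G=GP(\Gam\ssm v)\ast_{GP(\lk(v))}(GP(\lk(v))\times G_v)$ over a finite edge group would give $G$ more than one end by Stallings, a contradiction. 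It follows that at most one vertex has $G_v$ infinite (two such would be pairwise adjacent cone vertices, violating~(c)).

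Two cases then remain. In \textbf{Case A} some $v$ has $G_v$ infinite, and $v$ is conical, so $G=G_v\times H$ where $H:=GP(\Gam\ssm v,\mG\ssm\{G_v\})$ is a graph product of finite groups (all in $\mX$). By the inductive hypothesis $H\in\mX$. If $H$ is finite, then $G$ is virtually $G_v$; since $\mX$-membership is invariant under commensurability, $G\in\mX$ iff $G_v\in\mX$, contradicting $G\notin\mX$. If $H$ is infinite, then $H\le G$ has no infinite periodic subgroup by Lemma~\ref{lem:hyp}, hence contains $\Z$; together with $\Z\le G_v$ this gives $\Z^2\le G_v\times H=G$, contradicting word-hyperbolicity. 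In \textbf{Case B} every $G_v$ is finite. An induced $C_4$ in $\Gam$ on vertices $\{a,b,c,d\}$ (with opposite pairs $\{a,c\}$ and $\{b,d\}$) yields the commuting pair $\alpha\gamma$ and $\beta\delta$ for nontrivial $\alpha\in G_a,\gamma\in G_c,\beta\in G_b,\delta\in G_d$, each of infinite order in $G_a\ast G_c$, respectively $G_b\ast G_d$, so $\Z^2\le G$; thus $\Gam$ contains no induced $C_4$, and combined with~(e) the graph $\Gam$ is chordal. If $\Gam$ is a clique, $G=\prod_vG_v$ is finite and so not one-ended, a contradiction. Otherwise Dirac's theorem on chordal graphs provides two non-adjacent simplicial vertices, both necessarily non-conical, and at any such simplicial non-conical $v$ we split $G$ nontrivially over the finite subgroup $GP(\lk(v))=\prod_{u\in\lk(v)}G_u$, again giving more than one end by Stallings. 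The main obstacle is the cone-vertex claim of the second paragraph; once that is in hand the case split closes rapidly, relying on little more than Lemma~\ref{lem:hyp} and the standard observation that a graph product of nontrivial finite groups is torsion iff the graph is complete.
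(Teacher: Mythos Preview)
Your proof is correct and rests on the same core ingredients as the paper's: both force $\Gam$ to be chordal (via the $C_4\Rightarrow\Z^2$ observation and Corollary~\ref{cor:cycle}), then combine Dirac's structure theory for chordal graphs with Lemma~\ref{lem:hyp} and Stallings' theorem. The organization differs. You split first on whether some $G_v$ is infinite (Case~A versus Case~B) and, in the all-finite chordal case, appeal to the existence of a simplicial non-conical vertex; the paper instead splits on whether $\Gam$ is complete and, in the non-complete case, takes a minimal clique separator $\Gam_0=\Gam_1\cap\Gam_2$, arguing that either every vertex group on $\Gam_0$ is finite (so $G$ splits over a finite subgroup) or some $G_a$ with $a\in\Gam_0$ is infinite (producing $\Z^2$ via neighbours $a_i\in\Gam_i\setminus\Gam_0$). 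A few harmless redundancies in your write-up are worth noting: the inductive hypothesis is never used; in Case~A your cone-vertex paragraph already shows both that $\lk(v)$ is a clique \emph{and} that $v$ is conical, so $\Gam$ is in fact complete and $H=\prod_{u\ne v}G_u$ is automatically finite---the ``$H$ infinite'' subcase is vacuous; and the appeal to Corollary~\ref{cor:anticycle} in~(e) plays no role, since only induced long cycles (not their complements) are needed to establish chordality.
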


\begin{proof}
Suppose that $G=GP(\Gam,\mG)$ is one-ended and word-hyperbolic.
We may assume that each $G_v$ is nontrivial.
If $\Gam$ contains an induced cycle of length at least five, Corollary~\ref{cor:cycle} implies that $G\in\mS$.
If $\Gam$ contains an induced square, whose vertices are denoted as $a,b,c$ and $d$ cyclically, then $G$ would contain
$(G_a\ast G_c)\times (G_b\ast G_d)\ge \Z\times\Z$. 
So from now on, we will assume that $C_n\not\le\Gam$ for every $n\ge4$; 
namely, $\Gam$ is a \emph{chordal} graph~\cite{golumbic2004}.

Suppose that $\Gam$ is complete.
Then $G$ is the direct product of its vertex groups.
Since $G$ is one-ended,
at least one vertex group, say $G_a$, must be infinite.
By Lemma~\ref{lem:hyp},
each infinite vertex group of $G$ contains $\Z$. As 
$G$ does not contain $\Z\times \Z$, exactly one vertex group is infinite. 
Then $G$ is virtually $G_a$, and so, $G_a$ is one-ended hyperbolic. Since $G_a\in\mX$, we have 
$G_a\in \mS$.

Now, assume that $\Gam$ is not complete. Since $\Gam$ is chordal,
 $\Gam$ can be written as $\Gam = \Gam_1\cup \Gam_2$ 
for some induced subgraphs $\Gam_1,\Gam_2$ such that  $\Gam_0=\Gam_1\cap\Gam_2$ is 
complete~\cite{dirac1961}. 
We choose a minimal such $\Gam_0$.
If all the vertex groups of $\Gam_0$ are finite, then $G$ splits over a finite group,
and hence $G$ has more than one ends. 
So $G_a$ is infinite for some $a\in V(\Gam_0)$.
By minimality of $\Gam_0$, we can find $a_i\in\Gam_i\ssm \Gam_0$
such that $a_i$ is adjacent to $a$ for $i=1,2$. This implies that
$G$ contains $G_a\times(G_{a_1}\ast G_{a_2})$, and hence, $\Z\times \Z$.
This is a contradiction.
\end{proof}

\begin{rem}
\be
\item
Several other classes of groups are known to be closed under the graph product operation.
These classes include residually finite groups~\cite{green1990}, semihyperbolic groups~\cite{ab1995}, automatic groups~\cite{hm1995} and diagram groups~\cite{gs2006}. 
Meier characterized exactly when a graph product of word-hyperbolic groups is word-hyperbolic~\cite{meier1996}.
\item
Every $3$--manifold group is in $\mX$. 
To see this, suppose $M$ is a $3$--manifold such that $\pi_1(M)$ is one-ended and word-hyperbolic.
We may assume $M$ is orientable by taking a double cover if necessary.
By the Loop Theorem, either $M$ has a hyperbolic incompressible boundary component or $M$ is closed possibly after capping off spherical boundary components.  
If $M$ is closed, Perelman's geometrization theorem implies that $M$ is a closed hyperbolic $3$--manifold; then, the work of Kahn and Markovic~\cite{KM2009} implies that $\pi_1(M)\in\mS$.
\ee
\end{rem}

\def\cprime{$'$}
\providecommand{\bysame}{\leavevmode\hbox to3em{\hrulefill}\thinspace}
\providecommand{\MR}{\relax\ifhmode\unskip\space\fi MR }
\providecommand{\MRhref}[2]{%
  \href{http://www.ams.org/mathscinet-getitem?mr=#1}{#2}
}
\providecommand{\href}[2]{#2}

\end{document}